\documentclass[12pt]{amsart}
\usepackage[polish,english]{babel}
\usepackage[utf8]{inputenc}
\usepackage[T1]{fontenc}
\usepackage[nomath]{lmodern}
\usepackage{bm}
\usepackage[all]{xy}

\usepackage{amssymb,amsmath,amsthm,amsfonts}
\usepackage{thmtools,mathtools}
\usepackage{microtype}
\allowdisplaybreaks
\usepackage{enumitem} 

\usepackage[dvipsnames]{xcolor}
\usepackage{graphicx} 
\usepackage{tikz,tikz-cd,pgfplots}
\pgfplotsset{compat=newest}

\usepackage{caption} 
\usepackage{fmtcount} 
\usepackage[breaklinks,pdfencoding=auto,psdextra]{hyperref}
\hypersetup{
	colorlinks,
	linkcolor={black},
	citecolor={blue},
	urlcolor={red}
}
\urlstyle{same}
\usepackage[nameinlink,capitalise]{cleveref}
\usepackage{comment} 


\addto\extrasenglish{}
\addto\extrasenglish{}
\addto\extrasenglish{\def\equationautorefname~#1\null{\textnormal{(#1)}\null}}
\declaretheorem[name=Theorem,refname={Theorem},style=plain,numberwithin=section]{theorem}
\declaretheorem[name=Theorem,refname={Theorem},style=plain,numbered=no]{theorem*}
\declaretheorem[name=Proposition,refname={Proposition},style=plain,sibling=theorem]{proposition}
\declaretheorem[name=Proposition,refname={Proposition},style=plain,numbered=no]{proposition*}
\declaretheorem[name=Lemma,refname={Lemma},style=plain,sibling=theorem]{lemma}
\declaretheorem[name=Strategy,refname={Strategy},style=plain,sibling=theorem]{strategy}
\declaretheorem[name=Lemma,refname={Lemma},style=plain,numbered=no]{lemma*}

\declaretheorem[name=Definition,refname={Definition},style=definition,numbered=no]{definition*}
\declaretheorem[name=Remark,refname={Remark},style=definition,sibling=theorem]{remark}
\declaretheorem[name=Remark,refname={Remark},style=remark,numbered=no]{remark*}

\declaretheorem[name=Example,refname={Example},style=definition,numbered=no]{example*}
\declaretheorem[name=Corollary,refname={Corollary},style=plain,sibling=theorem]{corollary}
\declaretheorem[name=Corollary,refname={Corollary},style=plain,numbered=no]{corollary*}
\declaretheorem[name=Preliminaries,refname={Preliminaries},style=definition,numbered=no]{Preliminaries*}
\declaretheorem[name=Acknowledgements,refname={Acknowledgements},style=definition,numbered=no]{Acknowledgements*}

\crefname{theorem}{Theorem}{Theorems}
\crefname{lemma}{Lemma}{Lemmas}
\crefname{equation}{}{}
\crefformat{equation}{(#2#1#3)}

\def\CC{{\mathbb{C}}}

\def\bP{{\mathbf{P}}}

\def\fsl{{\mathfrak{sl}}}
\def\cA{{\mathcal{A}}}
\def\cB{{\mathcal{B}}}
\def\cC{{\mathcal{C}}}
\def\cD{{\mathcal{D}}}
\def\cE{{\mathcal{E}}}
\def\cF{{\mathcal{F}}}

\def\cI{{\mathcal{I}}}

\def\cK{{\mathcal{K}}}

\def\cN{{\mathcal{N}}}
\def\cO{{\mathcal{O}}}

\def\cQ{{\mathcal{Q}}}

\def\cU{{\mathcal{U}}}

\def\cW{{\mathcal{W}}}

\def\cZ{{\mathcal{Z}}}
\def\cHom{{\mathcal{H}om}}
\def\cExt{{\mathcal{E}xt}}

\def\PP{{\mathbb{P}}}

\def\CC{{\mathbb{C}}}
\def\id{{\operatorname{Id}}}
\def\sm{{\operatorname{sm}}}

\def\Ann{\operatorname{Ann}}

\def\codim{\operatorname{codim}}

\def\End{\operatorname{End}}
\def\Ext{\operatorname{Ext}}

\def\GL{\operatorname{GL}}
\def\Gr{\operatorname{Gr}}

\def\Hom{\operatorname{Hom}}

\def\Im{\operatorname{Im}}

\def\rank{\operatorname{rank}}

\def\SL{\operatorname{SL}}

\def\Sym{\operatorname{Sym}}

\def\Tot{\operatorname{Tot}}

\def\coKer{\operatorname{coKer}}
\def\coker{\operatorname{coker}}
\def\Ker{\operatorname{Ker}}
\def\depth{\operatorname{depth}}

\let\ordexists\exists
\def\exists{\operatorname{\ordexists}}
\let\ordforall\forall
\def\forall{\operatorname{\ordforall}}

\def\tilde{\widetilde}
\def\setminus{\smallsetminus}
\def\emptyset{\varnothing}

\def\longarrow#1#2{\mathchoice{#2}{#1}{#1}{#1}}
\def\to{\longarrow{\rightarrow}{\longrightarrow}}

\let\shortmapsto\mapsto
\def\mapsto{\longarrow{\shortmapsto}{\longmapsto}}

\usepackage[margin=1.2in]{geometry}

\title{On a conjecture on aCM and Ulrich sheaves on degeneracy loci}

\author[V.~Benedetti]{Vladimiro Benedetti}

\address{Universit\'e Côte d’Azur, CNRS - Laboratoire J.-A. Dieudonné, Parc Valrose, F-06108 Nice Cedex 2, France}
\email{vladimiro.benedetti@univ-cotedazur.fr}

\author[F.~Tanturri]{Fabio Tanturri}
\address{Dipartimento di Matematica, Dipartimento di Eccellenza 2023-2027, Università di Genova, Via Dodecaneso 35, 16146 Genova, Italy}\email{tanturri@dima.unige.it}

\begin{document}
\begin{abstract}
In this paper we address a conjecture by Kleppe and Mir\'o-Roig stating that suitable twists by line bundles (on the smooth locus) of the exterior powers of the normal sheaf of a standard determinantal locus are arithmetically Cohen--Macaulay, and even Ulrich when the locus is linear determinantal. We do so by providing a very simple locally free resolution of such sheaves obtained through the so-called Weyman's Geometric Method.
\end{abstract}

\maketitle

\section{Introduction}

Ulrich sheaves have been raising interest and attention in the past years since \cite{ESW}. In this paper it was conjectured that these sheaves, which are a special class of arithmetically Cohen--Macaulay (aCM) sheaves, exist on any projective variety. The existence of such sheaves on a variety has very interesting consequences both from an algebraic and a geometrical point of view. For instance, it was shown in \cite{ES} that it implies that the cone of possible cohomology tables of coherent sheaves on the variety is the same as the one for projective spaces.

From a more geometrical point of view, Ulrich sheaves are related to the existence of certain presentations of the equations of a projective variety. In \cite{beau1} it was shown that a hypersurface $\{F=0\}$ inside $\PP^n$ admits an Ulrich sheaf if and only if a power of the equation $F$ of the hypersurface can be written as the determinant of a suitable linear matrix; combined with the conjecture in \cite{ESW}, this says that any hypersurface is, modulo some power, \emph{determinantal}. We believe that this result may extend beyond the case of determinantal hypersurfaces to degeneracy loci, as hinted by \cref{prop_ulrich_determin}.

Since \cite{ESW}, much work has focused on constructing Ulrich sheaves on various classes of projective varieties. The case of hypersurfaces, and more generally of complete intersections, was dealt with in \cite{HUB}, where it was shown that smooth complete intersections admit Ulrich bundles. Then various authors have been constructing Ulrich sheaves on low dimensional varieties (see for instance \cite{ACMR,CFK}) or on homogeneous varieties (\cite{CMR,CCHMRW,CJ,LP}). In particular, for this last class, when restricting to completely reducible homogeneous bundles, one can even classify both aCM and Ulrich sheaves via combinatorial data (aCM sheaves on projective spaces were first classified in \cite{Ho} without any homogeneity assumption).

When looking for Ulrich sheaves, one may wonder whether well-known sheaves such as the tangent or cotangent bundle, and the normal or conormal bundle of a given variety are Ulrich. This question was essentially answered in a series of papers (\cite{BMPMT,Lo,LR,ACLR}) by classifying those varieties for which the mentioned bundles are Ulrich. In this classification a special role is played by degeneracy loci of morphisms of vector bundles.
\medskip

Given a morphism of vector bundles $s:E\to F$ on a variety $X$, one defines the $r$-th degeneracy locus of $s$ as the subvariety $D_r(s):=\{x\in X\mid \rank(s(x))\leq r\}$. These loci have been studied both in commutative algebra and in algebraic geometry. The main results in the context of Ulrich sheaves on degeneracy loci are contained in \cite{KMR}. In this technically advanced paper it was shown for instance that, when $X=\PP^n$, $r=\min(\rank(E),\rank(F))-1$ and $D_r(s)$ is of the expected dimension:
\begin{itemize}
\item[$i)$] if the matrix defining $s$ has polynomial entries, then a suitable twist of the normal bundle $\cN_{D_r(s)/ X}$ of $D_r(s)$ inside $X$ is aCM;
\item[$ii)$] if moreover the entries are linear, a suitable twist of $\cN_{D_r(s)/ X}$ is Ulrich and $\mu$-semistable.
\end{itemize}
In the above, the twist is meant to be ``twist by a line bundle on the smooth locus''. In the same paper, the authors conjectured that a similar behaviour should occur for the exterior powers of the normal bundle. The conjecture was motivated by the case of $\cN_{D_r(s)/ X}$ and by further numerical evidence. Our aim is to answer to that conjecture in a positive way (as a consequence of the results in \cref{sec_applications}, see also \cref{rem_almost_conj}).
\begin{theorem*}[Theorem A]
Let $X=\PP^n$, $r=\min(\rank(E),\rank(F))-1$ and $D_r(s)$ be of expected pure dimension. Then for any $j=0,\dots,\codim_X(D_r(s))$ there exist sheaves $\mathcal{W}_j$ which coincide on the smooth locus with $\wedge^j\cN_{D_r(s)/ X}$ up to a line bundle, such that
\begin{itemize}
\item[$i)$] if the matrix defining $s$ has polynomial entries, then $\mathcal{W}_j$ is aCM for $j=0,\dots,\codim_X(D_r(s))$;
\item[$ii)$] if moreover the entries are linear, a suitable twist of $\mathcal{W}_j$ is Ulrich and $\mu$-semistable for $j=0,\dots,\codim_X(D_r(s))$.
\end{itemize}
\end{theorem*}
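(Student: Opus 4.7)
The approach is Weyman's Geometric Method applied to the standard desingularization of $D_r(s)$. Assume without loss of generality $e := \rank E \leq \rank F =: f$, so $r = e-1$ and $c := \codim_X(D_r(s)) = f-e+1$. Let $\bG := \PP(E)$ be the relative projectivization, with projection $\varpi : \bG \to X$ and tautological inclusion $\cO_{\bG}(-1) \hookrightarrow \varpi^* E$. The composition $\cO_{\bG}(-1) \hookrightarrow \varpi^* E \to \varpi^* F$ gives a section of $\varpi^* F \otimes \cO_{\bG}(1)$, whose zero locus
\[
Z := \{(x,[v]) \in \bG : s(x)(v) = 0\}
\]
is a local complete intersection of codimension $f$ under the expected-dimension hypothesis. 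The restriction $\pi := \varpi|_Z : Z \to X$ is birational onto $D_r(s)$ and an isomorphism over $D_r(s)^\sm$. There, $\cO_{\bG}(-1)|_Z$ is the kernel line bundle $K$ of $s$, the quotient $\varpi^* E|_Z / \cO(-1)$ embeds into $\varpi^* F|_Z$ with cokernel the cokernel bundle $C$, and $\cN_{D_r(s)/X}|_\sm = K^* \otimes C$ yields
\[
\wedge^j \cN_{D_r(s)/X} \big|_\sm \; = \; \cO_{\bG}(j)|_Z \otimes \wedge^j C.
\]

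\textbf{Key steps.} For each $j = 0, \ldots, c$, I would choose a natural coherent sheaf $\cG_j$ on $\bG$ extending $\cO_{\bG}(j)|_Z \otimes \wedge^j C$ (the cleanest candidate being the cokernel of the map $\varpi^* \wedge^{j-1} F \otimes (\varpi^* E / \cO_{\bG}(-1)) \to \varpi^* \wedge^j F$, twisted by $\cO_{\bG}(j)$), and define
\[
\cW_j := \varpi_* \bigl( \cG_j \otimes \cO_Z \bigr).
\]
To produce a locally free resolution of $\cW_j$ on $X$, I would resolve $\cO_Z$ on $\bG$ by the Koszul complex of the defining section of $\varpi^* F \otimes \cO_{\bG}(1)$ (whose $k$-th term is $\varpi^* \wedge^k F^* \otimes \cO_{\bG}(-k)$), tensor with $\cG_j$, and push down via $\varpi$. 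Computing the fiberwise cohomology of $\PP(E) \to X$ using $\varpi_* \cO_{\bG}(m) = \Sym^m E^*$ for $m \geq 0$, the vanishing of all $R^i \varpi_* \cO_{\bG}(m)$ for $-e < m < 0$, and the dual identification for $m \leq -e$, combined with the projection formula for the $\wedge^k F^*$ contributions, the surviving terms assemble into an explicit locally free complex on $X$ of length exactly $c$. By Auslander--Buchsbaum this yields the aCM condition in $(i)$. For $(ii)$, specialize to $X = \PP^n$, $E = \cO^{\oplus e}$, $F = \cO^{\oplus f}(1)$: the $k$-th term of the pushed-down resolution becomes a direct sum of copies of $\cO_{\PP^n}(-k - \alpha_j)$ for a single shift $\alpha_j$ depending only on $j$. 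Twisting by $\cO(\alpha_j)$ puts the resolution in the Ulrich shape $\cO(-k)^{\oplus b_{j,k}}$ at step $k$, so $\cW_j(\alpha_j)$ is Ulrich. The $\mu$-semistability then follows by combining the $\SL(E) \times \SL(F)$-equivariance of the construction with a standard criterion for Ulrich sheaves admitting such a minimal, equivariant resolution (as in \cite{CMR}).

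\textbf{Main obstacle.} The delicate point is to ensure that pushing the Koszul complex forward really produces a single locally free resolution, not merely a hypercohomology spectral sequence with several nonzero pages: all higher direct images $R^{>0} \varpi_* \bigl( \cG_j \otimes \varpi^* \wedge^k F^* \otimes \cO_{\bG}(-k) \bigr)$ must vanish, so that the hypercohomology collapses to a single complex of length $c$. This is a classical Bott-type analysis on $\PP(E)$, but the combinatorics force the precise form of the extension $\cG_j$, and have to be carried out term-by-term as $k$ ranges over $0,\ldots,f$. A secondary verification, essentially automatic from the fact that $\pi$ is an isomorphism over the smooth locus but still requiring a careful bookkeeping of the twist by $\det E$, $\det F$ and powers of $\cO_{\PP^n}(1)$, is that $\cW_j$ restricts on $D_r(s)^\sm$ to $\wedge^j \cN_{D_r(s)/X}$ up to a line bundle, so that the statement matches the Kleppe--Mir\'o-Roig conjecture verbatim.
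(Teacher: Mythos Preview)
Your overall framework---Weyman's Geometric Method applied to a Kempf collapsing, followed by a Bott-type cohomology computation to collapse the pushed-forward Koszul complex---is exactly the paper's strategy. The gap is in the choice of collapsing and, consequently, in your auxiliary sheaf $\cG_j$.

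You work on $\bG=\PP(E)$, the projectivization of the \emph{source}. The candidate you propose for $\cG_j$ is the cokernel of a map
\[
\varpi^*\wedge^{j-1}F\otimes(\varpi^*E/\cO_{\bG}(-1))\;\longrightarrow\;\varpi^*\wedge^jF,
\]
but this map does not exist on $\bG$: it would require a morphism $\varpi^*E/\cO_{\bG}(-1)\to\varpi^*F$, i.e.\ that $\varpi^*s$ kill the tautological line, and that happens precisely on $Z$, not on all of $\bG$. So your $\cG_j$ is only defined on $Z$, and tensoring the Koszul resolution of $\cO_Z$ by a sheaf supported on $Z$ destroys exactness and the whole mechanism. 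More fundamentally, on $\PP(E)$ there is \emph{no} natural locally free sheaf restricting on $Z$ to $\wedge^j\coker(s)$ for $1\le j\le c-1$: the tautological bundles available are $\cO_{\bG}(1)$, $\varpi^*E$, $\varpi^*F$ and $\varpi^*E/\cO_{\bG}(-1)$, none of which sees the cokernel. Twists $\cO_Z(a)$ alone push forward to the classical Eagon--Northcott family (powers of $\cK(s)$), which handles $j=0$ but not the higher exterior powers.

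The paper avoids this by using the \emph{other} collapsing, over the Grassmannian $\Gr(r,F)$ of $(e-1)$-planes in the \emph{target}. There the tautological quotient $\cQ$ is a genuine vector bundle on the whole Grassmannian and restricts on the incidence locus to $\coker(s)$; likewise $\det\cU$ gives the line-bundle part. One can then take $W_j=\det(\cU)^{\,c-j}\otimes\wedge^j\cQ$ as an honest homogeneous bundle, twist the Koszul complex by $W_j$, and run Bott--Borel--Weil on $\Gr(r,m)$ to obtain the length-$c$ resolution. The resulting sheaf is $\cW_j=\Sym^{c-j}\cK(s)\otimes\wedge^j\coker(s)$, which on the smooth locus equals $\cK(s)^{\,c-2j}\otimes\wedge^j\cN_{D_r(s)/X}$. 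A minor additional point: the $\mu$-semistability in $(ii)$ is not deduced from equivariance but from the general fact, due to Casanellas--Hartshorne--Geiss--Schreyer, that Ulrich sheaves are automatically $\mu$-semistable.
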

We obtain this result in a \emph{uniform} way, by constructing a locally free minimal resolution of suitable $\mathcal{W}_j$. This is done by using the so-called Weyman's Geometric Method, which allows us to construct a locally free resolution on a very natural resolution of singularities of $D_r(s)$, known as Kempf collapsing; then, this locally free resolution is pushed forward to obtain a locally free resolution of the required sheaves $\mathcal{W}_j$ (see \cref{thm_res_ODL} and \cref{thm_res_determ_ext_normal}, which is the corresponding \emph{universal} affine version). More specifically, we prove the following

\begin{theorem*}[Theorem B]
Let $E_1$, $E_2$ be vector bundles of rank $n,m$ on a Cohen--Macaulay scheme $X$, $r=n-1$ and $s\in H^0(E_1^\vee \otimes E_2)$ a global section such that $D_r(s)$ has expected pure dimension (i.e.\ pure codimension $m-r$), then we obtain a locally free resolution $$ 0\to \cE^j_\bullet \to \cW_j:=\Sym^{m-r-j}\cK(s) \otimes \wedge^j \coker(s) \to 0 ,$$ 
where, for $0\leq u\leq m-r$,
$$
\begin{aligned}
\cE_{-u}^{j} = \bigoplus_{\lambda\in \cI_j,|\lambda|=u+r(m-r-j)-j}\tilde{c}_{j\lambda}^\mu S^{\mu^t}E_1 \otimes S^{\tilde{\lambda}}E_2^\vee.
\end{aligned}
$$
\end{theorem*}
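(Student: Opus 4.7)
The plan is to apply Weyman's Geometric Method via a Kempf collapsing of $D_r(s)$. Since $r = n-1$, the natural desingularization is the projective bundle $\pi \colon Y = \PP(E_1) \to X$ of lines in $E_1$, with tautological subbundle $\cO_Y(-1) \hookrightarrow \pi^* E_1$ and tautological quotient $T := \pi^* E_1 / \cO_Y(-1)$ of rank $r$. The composition $\cO_Y(-1) \hookrightarrow \pi^* E_1 \xrightarrow{s} \pi^* E_2$ yields a section $\sigma$ of $\cO_Y(1) \otimes \pi^* E_2$ whose vanishing locus $Z := V(\sigma) \subset Y$ parametrises pairs $(x, \ell)$ with $\ell \subset \ker(s_x)$, and $\pi|_Z \colon Z \to D_r(s)$ is birational. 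Under the expected-codimension hypothesis, $Z$ has pure codimension $m$ in $Y$, so the Koszul complex
\[
K_\bullet(\sigma) \colon 0 \to \cO_Y(-m) \otimes \pi^* \wedge^m E_2^\vee \to \dots \to \cO_Y(-1) \otimes \pi^* E_2^\vee \to \cO_Y \to \cO_Z \to 0
\]
is a locally free resolution of $\cO_Z$ on $Y$.

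For each $j \in \{0, \dots, m-r\}$ I identify a locally free twist $\cN_j$ on $Y$ such that $R^0\pi_*(\cO_Z \otimes \cN_j) \cong \cW_j$ and $R^{>0}\pi_*(\cO_Z \otimes \cN_j) = 0$. The shape of $\cN_j$ is dictated, on the smooth locus of $D_r(s)$ where $\pi|_Z$ is an isomorphism, by the identification $\cO_Y(-1)|_Z \cong \cK(s)$ (which will provide the factor $\Sym^{m-r-j}\cK(s) = \cK(s)^{m-r-j}$) and by the identification of $\coker(s)$ with the cokernel of $T|_Z \to \pi^* E_2|_Z$ (which allows one to produce $\wedge^j \coker(s)$ via a relative Eagon--Northcott-type construction on $Y$). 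Verifying the vanishing of higher direct images and the equality on the nose with $\cW_j$ is the essential set-up step, and relies on the Cohen--Macaulayness of $Z$.

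Tensoring $K_\bullet(\sigma)$ by $\cN_j$ and applying $R\pi_*$, the projection formula reduces each term to $\cO_Y(a) \otimes \pi^* \cG$ for some integer $a$ and some $\cG$ built from exterior powers of $E_2^\vee$. Bott's theorem on the projective bundle $\pi$ then computes the direct images: $\pi_* \cO_Y(a)$ is a symmetric power of $E_1^\vee$ for $a \geq 0$, $R^{n-1}\pi_*\cO_Y(a)$ is (up to a determinantal twist by $\det E_1$) a dual symmetric power of $E_1$ for $a \leq -n$, and all other direct images vanish. Because at each Koszul degree at most one direct image is nonzero and the middle range $-n < a < 0$ contributes nothing, the hypercohomology spectral sequence degenerates to a single locally free complex $\cE^j_\bullet \to \cW_j$ of length $m - r = \codim_X D_r(s)$, whose two segments (from $\pi_*$ and from $R^{n-1}\pi_*$) account for the two families of Schur summands appearing in $\cE^j_{-u}$.

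The main obstacle is the final combinatorial identification: one must expand the tensor products of symmetric and dual-symmetric powers of $E_1^\vee$ with $\wedge^\bullet E_2^\vee$ via the Cauchy--Pieri and Littlewood--Richardson rules and match the outcome with the compact Schur-functor description $\bigoplus_\lambda \tilde c_{j\lambda}^\mu \, S^{\mu^t}E_1 \otimes S^{\tilde\lambda}E_2^\vee$. In particular, the partition-size constraint $|\lambda| = u + r(m-r-j) - j$ should encode the bookkeeping among the Koszul-homological degree, the total $\cO_Y$-twist coming from $\cN_j$, and the determinantal correction arising from relative Serre duality on the $R^{n-1}$-contributions; the coefficients $\tilde c_{j\lambda}^\mu$ arise as Littlewood--Richardson-type multiplicities, and the precise index set $\cI_j$ emerges as the range of partitions which contribute nontrivially. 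Minimality of the resulting complex then follows from the fact that all maps between Schur summands of $\cE^j_\bullet$ are expressed in terms of natural transformations of positive degree, hence vanish on the fibres.
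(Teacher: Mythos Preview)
Your overall plan---Kempf collapsing plus Weyman's geometric method---is the right framework, but the specific collapsing you choose creates a genuine gap. You resolve $D_r(s)$ via $\PP(E_1)$, whereas the paper uses the collapsing over $\Gr(r,E_2)$ (and then relativizes the universal affine computation by the Generic Perfection Theorem rather than working directly over $X$). This is not merely a cosmetic difference: the locally free twist needed to produce $\cW_j$ lives naturally on $\Gr(r,E_2)$, not on $\PP(E_1)$.

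Concretely, for $j\geq 1$ there is no locally free sheaf $\cN_j$ on $Y=\PP(E_1)$ whose restriction to $Z$ is (the pullback of) $\cK(s)^{m-r-j}\otimes\wedge^j\coker(s)$. The factor $\cK(s)^{m-r-j}$ is fine, since $\cO_Y(-1)|_Z\cong\cK(s)$; but $\coker(s)$ is the restriction of the tautological quotient $\cQ$ on $\Gr(r,E_2)$, and has no locally free avatar on $\PP(E_1)$. Your ``relative Eagon--Northcott-type construction on $Y$'' would replace $\cN_j$ by a complex; once you do that, you are no longer pushing forward a single Koszul complex twisted by a bundle, the spectral sequence no longer collapses for the simple degree reasons you invoke, and the argument does not go through as written. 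The paper avoids exactly this by taking $W_j=\det(\cU)^{m-r-j}\otimes\wedge^j\cQ$ on $\Gr(r,m)$, which is locally free; even so, identifying the pushforward with $\Sym^{m-r-j}K\otimes\wedge^j\coKer$ requires a careful argument (their Lemmas~3.3--3.6 and Proposition~3.7), not just an appeal to the smooth locus.

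There is a second symptom of the same problem. With the $\PP(E_1)$ collapsing, each Koszul term is $\cO_Y(-p)\otimes\pi^*\wedge^p E_2^\vee$, so after $R\pi_*$ the $E_2$-factor is always an exterior power $\wedge^p E_2^\vee$. But the formula you are asked to prove has genuine Schur functors $S^{\tilde\lambda}E_2^\vee$ with $\tilde\lambda=[\lambda_{m-r}-r,\dots,\lambda_{j+1}-r,0^r,\lambda_j,\dots,\lambda_1]$, which for $j\geq 1$ are not exterior powers (the last $j$ entries can equal $-1$). No amount of Littlewood--Richardson on the $E_1$ side will manufacture these $E_2$-Schur functors from a pure $\wedge^p E_2^\vee$. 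This confirms that the required twist must carry nontrivial $E_2$-information, which only the $\Gr(r,E_2)$ collapsing provides.
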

Here by $\cK(s)$ we mean the cokernel of $s^\vee:F^\vee\to E^\vee$, and by $\coker(s)$ the cokernel of $s$, knowing that $n\leq m$. $S^\bullet$ denotes the plethysm associated to the weight $\bullet$; $\mu^t$ is the transpose of the weight $\mu$, $|\mu |$ is the norm of $\mu$, while the meaning of $\tilde{\lambda}$ is explained in \cref{sec_weyman_resolution}. The coefficients $\tilde{c}^\mu_{j\lambda}$ are defined in \cref{sec_weyman_resolution} as well as the set $I_j$. By \cref{prop_normal_Kcoker_relative} the sheaf $\Sym^{m-r-j}\cK(s) \otimes \wedge^j \coker(s)$, when restricted to the smooth locus, is a twist of the $j$-th exterior power of the normal bundle of $D_r(s)$ inside $X$.
\medskip

The plan of the paper is as follows. In \cref{sec_repr_homog_multilin_ulrich} we recall basic facts about Ulrich sheaves, representation theory, homogeneous geometry and multilinear algebra. In \cref{sec3} we recall Weyman's Geometric Method and apply it to obtain the universal affine version of Theorem B. Then, in \cref{sec_res_deg_locy}, we apply this result to degeneracy loci to obtain Theorem B. Finally, in \cref{sec_applications} we deduce some applications, namely the results summarised in Theorem A.


\subsection*{Acknowledgments} This paper issued from discussions started in the Będlewo conference \emph{Explicit Algebraic Geometry}. We thank Angelo Felice Lopez for giving an inspiring talk during the conference, and Jan O.\ Kleppe for very useful explanations about \cite{KMR}. The first author would like to thank Micha\l\ Kaputska for his hospitality during the week prior to the conference.

VB was partially supported by FanoHK ANR-20-CE40-0023. FT is a member of INdAM GNSAGA. He was partially supported by the MIUR Excellence Department Project awarded to the Dipartimento di Matematica of the Università di Genova, CUP D33C23001110001, and by the Curiosity Driven 2021 Project \emph{Varieties with trivial or negative canonical bundle and the birational geometry of moduli spaces of curves: a constructive approach} - Programma nazionale per la Ricerca (PNR) DM 737/2021. 

\section{A few preliminaries}
\label{sec_repr_homog_multilin_ulrich}

In this section we recall the definition of arithmetically Cohen--Macaulay and Ulrich sheaves. Then we will recall some classical results about representation theory, in particular formulas for exterior powers of tensor products and tensor product by exterior powers of a vector space, and a very powerful tool in the study of homogeneous varieties which is Bott--Borel--Weyl Theorem. We end the section with some general lemmas on multilinear algebra.

\subsection{ACM and Ulrich sheaves}

Let us begin with some preliminary definitions. Let $X \subset \PP^N$  be a projective scheme and let $E$ be a coherent sheaf on $X$. The sheaf $E$ is
said to be \emph{locally Cohen--Macaulay} if $\depth E_x=\dim \cO_{X,x}$ for all (closed) points $x\in X$. It is said to be \emph{arithmetically Cohen--Macaulay} (denoted by aCM in the following) if it is locally Cohen--Macaulay and satisfies
\[
H^i(X, E(t)) = 0 \mbox{ for all }t \mbox{ and }i = 1, \dots, \dim X-1.
\]

$E$ is said to be \emph{initialized} if $H^0(E)\neq 0$ but $H^0(E(-1))=0$. An \emph{Ulrich} sheaf is an initialized aCM sheaf having the maximum possible number of global sections, i.e., $h^0(E)=\deg(X)\rank(E)$. 

In what follows, we will often identify a sheaf $E$ on $X \subset \PP^N$ with $i_*E$, where $i:X\to \bP^n$ denotes the inclusion. Ulrich sheaves can be equivalently characterised in several ways:

\begin{theorem}[\cite{ESW}, Proposition 2.1]
\label{ESW21}
Let $E$ be a coherent, $n$-dimensional sheaf on $\PP^N$ supported on a positive-dimensional scheme $X$. The following conditions are equivalent:
\begin{itemize}
    \item[$(i)$] $E$ is Ulrich.
\item[$(ii)$] $E$ admits a linear $\cO_{\PP^N}$-resolution of the form
$$ 0 \to \cO_{\PP^N} (-N + n)^{a_{N-n}} \to \dots \to \cO_{\PP^N} (-1)^{a_1} \to O^{a_0}_{\PP^N} \to E \to 0.$$
\item[$(iii)$] $H^i(E(-i)) = 0$ for $i > 0$ and $H^i(E(-i - 1)) = 0$ for $i < n$.
\item[$(iv)$] For some (resp.\ all) finite linear projections $\pi : X \to \PP^n$, the sheaf $\pi_*E$ is the trivial sheaf $O^t_{\PP^n}$ for some $t$.
\end{itemize}
\end{theorem}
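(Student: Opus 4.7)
My plan is to establish the cycle $(iv) \Rightarrow (ii) \Rightarrow (i) \Rightarrow (iii) \Rightarrow (iv)$, using as the central unifying device a general finite linear projection $\pi \colon X \to \PP^n$ (which exists because $\dim X = n$). The key property is that for such a $\pi$ one has $H^i(X, E(t)) = H^i(\PP^n, \pi_*E(t))$ for every $i, t$, so cohomological conditions on $E$ in $\PP^N$ translate faithfully into conditions on $\pi_*E$ on $\PP^n$, where triviality of $\pi_*E$ is the most concrete characterisation to work with.

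The implications $(iv) \Rightarrow (ii) \Rightarrow (i)$ are essentially computational. For $(iv) \Rightarrow (ii)$, I would resolve $\pi_*E \cong \cO_{\PP^n}^t$ on $\PP^N$ by the Koszul complex on the $N-n$ linear forms cutting out a linear $\PP^n \subset \PP^N$; after comparing graded module structures on global sections, the same Betti table produces a linear resolution of $E$ on $\PP^N$ with ranks $a_j = t\binom{N-n}{j}$. For $(ii) \Rightarrow (i)$, the resolution has length exactly $\codim_{\PP^N} X = N-n$, forcing $E$ to be locally Cohen--Macaulay of dimension $n$ by Auslander--Buchsbaum; chasing the resolution through cohomology yields $H^i(E(t))=0$ for $0<i<n$, so $E$ is aCM, and reading the Hilbert polynomial off the resolution gives $h^0(E) = \deg(X)\rank(E)$. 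The step $(i) \Rightarrow (iii)$ combines aCM vanishing with Castelnuovo--Mumford $0$-regularity (to obtain $H^i(E(-i)) = 0$ for $i > 0$) and Serre duality on $X$ (to obtain $H^i(E(-i-1))=0$ for $i < n$), using the sharp count $h^0(E) = \deg(X)\rank(E)$ to rule out any nontrivial twist at the boundary.

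The main obstacle is $(iii) \Rightarrow (iv)$. Choose a finite linear projection $\pi$ and transfer the vanishings of (iii) to $\pi_*E$ on $\PP^n$. The first nontrivial task is to show that $\pi_*E$ is locally free: since $E$ is locally Cohen--Macaulay of dimension $n$ and $\pi$ is a finite map to the smooth $n$-fold $\PP^n$, the Auslander--Buchsbaum formula forces $\pi_*E$ to have projective dimension $0$ at every point. Horrocks' splitting criterion then applies: the vanishing of all intermediate cohomology of every twist of $\pi_*E$ forces a decomposition $\pi_*E \cong \bigoplus_j \cO_{\PP^n}(a_j)$, and the sharpened vanishings $H^i(\pi_*E(-i)) = H^i(\pi_*E(-i-1)) = 0$ pin down each $a_j = 0$, yielding $\pi_*E \cong \cO_{\PP^n}^t$. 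The ``some vs.\ all'' variant of (iv) is handled by observing that the cohomological hypotheses in (iii) are intrinsic to $E$ and are inherited by $\pi_*E$ for any generic finite linear projection.
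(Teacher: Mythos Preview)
The paper does not give its own proof of this theorem; it is quoted verbatim as \cite[Proposition~2.1]{ESW} and used as a black box. So there is no in-paper argument to compare your proposal against.

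That said, your outline is close to the original Eisenbud--Schreyer argument, with one weak link. In your step $(i)\Rightarrow(iii)$ you appeal to ``$0$-regularity'' and ``Serre duality on $X$'' to obtain the two families of vanishings. Neither is immediately available: $0$-regularity is precisely the statement $H^i(E(-i))=0$ for $i>0$ that you are trying to prove, and Serre duality on $X$ is delicate because $X$ need not be smooth (nor even Gorenstein). Concretely, the only vanishing in $(iii)$ not already forced by aCM and initialisation is $H^n(E(-n))=0$, and your sketch does not explain how the numerical condition $h^0(E)=\deg(X)\rank(E)$ produces it.

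The clean route—used in \cite{ESW}—is to prove $(i)\Rightarrow(iv)$ directly, bypassing $(iii)$. Push $E$ forward along a general finite linear projection $\pi$; then $\pi_*E$ is locally free and aCM on $\PP^n$, hence by Horrocks $\pi_*E\cong\bigoplus_j\cO_{\PP^n}(a_j)$ with $\rank(\pi_*E)=\deg(X)\rank(E)$. Initialisation gives $\max a_j=0$, while $h^0(\pi_*E)=h^0(E)=\deg(X)\rank(E)=\rank(\pi_*E)$ forces every $a_j\ge 0$; hence all $a_j=0$. The vanishings of $(iii)$ then follow for free from $\pi_*E\cong\cO_{\PP^n}^t$. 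Your arguments for $(iv)\Rightarrow(ii)$ and $(ii)\Rightarrow(i)$ are fine.
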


This simple result shows how degeneracy loci are intertwined with Ulrich sheaves.

\begin{proposition}
\label{prop_ulrich_determin}
Let $X\subset \bP^N$ be a codimension $c$, degree $d$ subvariety, and let $E$ be an Ulrich sheaf of rank $r$ on $X$. Then $X$ is cut out set-theoretically by the maximal minors of a $cdr\times dr$ matrix of linear forms.
\end{proposition}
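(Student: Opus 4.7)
The plan is to exploit the linear-resolution characterization of Ulrich sheaves in \cref{ESW21}$(ii)$. Viewing $E$ as a sheaf on $\PP^N$ via $i_*$, we obtain
\[
0 \to \cO_{\PP^N}(-c)^{a_c} \to \cdots \to \cO_{\PP^N}(-1)^{a_1} \xrightarrow{M} \cO_{\PP^N}^{a_0} \to i_*E \to 0,
\]
where $M$ is a matrix of linear forms. The first step is to pin down $a_0$ and $a_1$. Taking global sections of the tail $\cO(-1)^{a_1} \to \cO^{a_0} \to i_*E \to 0$ and using $H^0(\cO_{\PP^N}(-1)) = H^1(\cO_{\PP^N}(-1)) = 0$ gives $a_0 = h^0(E) = dr$, the last equality holding since $E$ is initialized and Ulrich. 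To compute $a_1$, I would invoke \cref{ESW21}$(iv)$: for a finite linear projection $\pi\colon X \to \PP^n$, $\pi_*E \simeq \cO_{\PP^n}^{dr}$, so $\chi(E(t)) = dr \binom{n+t}{n}$. Comparing this with the alternating sum $\sum_{i=0}^c (-1)^i a_i \binom{N+t-i}{N}$ read off from the resolution and evaluating at $t = 1$ yields $a_0(N+1) - a_1 = dr(N-c+1)$, hence $a_1 = cdr$.

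Thus $M$ is, up to transpose, a $cdr \times dr$ matrix of linear forms on $\PP^N$. The second step is to identify its degeneracy locus with $X$ set-theoretically. For a closed point $p \in \PP^N$, tensoring the resolution with $k(p)$ gives, by right exactness, $\coker(M(p)) \simeq (i_*E) \otimes k(p)$. Hence $M(p)$ has full rank $dr$---equivalently, some maximal ($dr \times dr$) minor of $M$ is nonzero at $p$---if and only if $(i_*E) \otimes k(p) = 0$, which by Nakayama means exactly $p \notin \operatorname{Supp}(i_*E)$.

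The final step is to match $\operatorname{Supp}(i_*E)$ with $X$. Since $E$ has positive rank on the integral variety $X$, its support contains the generic point of $X$ and, being closed, coincides with $X$. Putting the pieces together, $X$ is cut out set-theoretically by the $dr \times dr$ minors of $M$, as claimed. No real obstacle is anticipated: the only slightly non-formal ingredient is the Hilbert-polynomial computation of $a_1$, which is a routine binomial identity once \cref{ESW21}$(iv)$ is used to pin down $\chi(E(t))$.
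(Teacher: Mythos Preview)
Your proof is correct and follows essentially the same route as the paper's: use the linear resolution from \cref{ESW21}$(ii)$, identify the ranks of the first two free modules, and observe that the set-theoretic support of the cokernel is exactly the locus where the presenting matrix drops rank. The only difference is cosmetic: the paper simply cites \cite{ESW} for the fact that the $a_i$ are determined by $\deg E = dr$, whereas you compute $a_0$ and $a_1$ directly via $h^0(E)$ and a Hilbert-polynomial evaluation at $t=1$ using \cref{ESW21}$(iv)$; your argument is thus slightly more self-contained, and your justification that $\operatorname{Supp}(i_*E)=X$ (using integrality of $X$ and positive rank) fills in a detail the paper leaves implicit.
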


\begin{proof}
Since $E$ is Ulrich, there exists a linear resolution of $E$ as in \cref{ESW21}. By \cite{ESW}, the ranks $a_i$ of the terms in the resolution are determined by the degree of $E$, which is $dr$, hence the resolution has the following form
$$ 0\to \cO_{\PP^N}(-c)^{dr} \to \cdots \to \cO_{\PP^N}(-1)^{cdr} \to \cO_{\PP^N}^{dr}\to E \to 0.$$
The sheaf $E$ is supported on $X$ and it is the cokernel of the morphism $\cO(-1)^{cdr} \to \cO^{dr}$ of the resolution. In particular, its set-theoretical support is the locus where this morphism has rank at most $dr-1$.
\end{proof}

In what follows, we will always assume that all schemes are projective.

\subsection{Technical tools from representation theory}

If $V$ is a vector space and $\lambda$ a non-increasing sequence of integers, we denote by $S^\lambda V$ the Schur functor applied to $V$. For instance $S^{(p)}=\Sym^p V$ and $S^{(1^{k})}V=\wedge^k V$. Since this is the only case we will need later on, we restrict ourselves to the following hypothesis: $\bm{r=n-1}$; also recall that $n \leq m$ by hypothesis. The three technical ingredients of the following sections are Bott--Borel--Weil Theorem and a decomposition of the exterior powers of a tensor product and of the tensor product of particular Schur functors. Before going on let us recall how these tools work.

\begin{remark}[Bott--Borel--Weil Theorem]
Later on we will need to compute the cohomology groups of homogeneous bundles over the Grassmannian $\Gr(r,m)$. We will use Bott--Borel--Weil Theorem to do so. We will adopt a notation \emph{ad hoc}: a homogeneous vector bundle $U$ on $\Gr(r,m)$ will be denoted by a sequence of integers $[\mu_m,\mu_{m-1},\cdots,\mu_{m-r+1};\mu_{m-r},\cdots,\mu_1]$ with $\mu_m\geq \cdots  \geq\mu_{m-r+1}$ and $\mu_{m-r}\geq \cdots \geq \mu_1$, where the sequence $\mu$ corresponds to the dual of the highest weight of the $\SL(r)\times \SL(m-r)\rtimes \CC^*$-representation defined by the fibres of $U$. For instance $\cU=[0,\cdots,0,-1;0,\cdots,0]$, $\cO(-1)=[a,\cdots,a;a+1,\cdots,a+1]$ for any integer $a$, $\cU^\vee=[1,0,\cdots,0]$, $\cQ=[0,\cdots,0;0,\cdots,-1]$, $\cQ^\vee=[0,\cdots,0;1,\cdots,0]$. In general the bundle $S^\mu \cU^\vee \otimes S^\nu  \cQ^\vee$ obtained by applying the Schur functors $S^\mu$ and $S^\nu$ corresponds to the weight $[\mu;\nu]$. 

Let us also denote by $\delta:=[m,m-1,\cdots,1]$. Then Bott--Borel--Weil Theorem states that a bundle $[\mu]$ has cohomology in degree $l$ if and only if $l$ is the length of the permutation $\sigma$ such that $\sigma(\mu+\delta)$ is a strictly decreasing sequence. If such a permutation exists (i.e.\ $\mu_i+\delta_i \neq \mu_j+\delta_j$ for any $i\neq j$) then $H^l(\Gr(r,m),[\mu]) =S^{\sigma(\mu+\delta)- \delta}(\CC^m)^\vee$.
\end{remark}

\begin{remark}[Exterior power of a tensor product]
Recall the decomposition of the exterior power of a tensor product. Indeed, if $A,B$ are two vector spaces then $\wedge^p (A\otimes B)$ decomposes in a direct sum of vector spaces as follows:
$$ \wedge^p (A\otimes B)=\bigoplus_{|\mu|=p} S^\mu A \otimes S^{\mu^t}B ,$$
where $|\mu|=\sum_i \mu_i$ and $\mu^t$ is the non-increasing sequence corresponding to a Young diagram which is dual to that of $\mu$. This is a consequence of the Littlewood--Richardson rule (see \cite[Exercise 6.11]{FultonHarris}). Notice that this formula when $A$ is one-dimensional yields the well-known formula $\wedge^p (A\otimes B) = A^{\otimes p}\otimes \wedge^p B$ because all other terms vanish. Notice more generally that in the formula above one can consider only weights $\mu$ such that $\mu_{\dim(A)+1}=0$ and $\mu_1\leq \dim(B)$ otherwise either $S^\mu A$ or $S^{\mu^t}B$ vanish. Clearly when $A$ and $B$ are representations of a reductive group, the formula above respects the decomposition in irreducible representations.
\end{remark}

\begin{remark}[Tensor product by exterior power]
Let $\lambda=[\lambda_{m-r}\geq \cdots \geq \lambda_1]$ be an integer sequence, $0\leq j\leq m-r$ and $A$ a $(m-r)$-dimensional vector space. Let us denote by $c_{j\lambda}^\mu$ the coefficients appearing in the decomposition
\begin{equation}
\label{tensor_exterior}
    S^\lambda A \otimes \wedge^j A = \bigoplus_{|\mu|=|\lambda|+j}c_{j\lambda}^\mu S^\mu A.
\end{equation}
It turns out from the Littlewood--Richardson rule that $c_{j\lambda}^\mu$ is either equal to $0$ or $1$. All weights $\mu$ for which $c_{j\lambda}^\mu=1$ are recovered as follows from $\lambda$. Denote by $Y_\lambda$ (respectively $Y_\mu$) the Young diagram of $\lambda$ (resp.\ $\mu$). Recall that $Y_\lambda$ is a diagram with $m-r$ columns and $\lambda_i$ boxes in the $i$-th column (column indexes decrease towards the right, so that $Y_\lambda$ has a staircase shape decreasing towards the right). Then $c_{j\lambda}^\mu=1$ if and only if $Y_\mu$ is obtained from $Y_\lambda$ by adding $j$ boxes so that
\begin{itemize}
    \item[$-$] the new diagram has a staircase shape decreasing towards the right;
    \item[$-$] no two added boxes are one above the other.
\end{itemize}
We will use this rule even with weights $\lambda$ such that $\lambda_1<0$. In this case apply the rule to $\lambda'=\lambda+[\lambda_1,\cdots,\lambda_1]$ in order to obtain coefficients $c_{j\lambda'}^{\mu'}$; then use \cref{tensor_exterior} by noticing that $c_{j\lambda}^{\mu}=c_{j\lambda'}^{\mu'}$ when $\mu=\mu'-[\lambda_1,\cdots,\lambda_1]$.
\end{remark}

\subsection{Some multilinear algebra}

Let us consider an exact sequence of modules over a ring $R$:
\[
M \mathrel{\mathop{\to}^{\mathrm{\varphi}}} N \to \coker\varphi \to 0.
\]
We can construct $N^{\otimes k}$ and its quotients $\wedge^k N$ and $\Sym^k N$. By the right exactness of the tensor product we have for each $k \geq 1$ an induced exact sequence
$$ M\otimes N^{\otimes k-1}\mathrel{\mathop{\to}^{\mathrm{\varphi_{k}^1}}}N^{\otimes k}\to \coker \varphi \otimes N^{\otimes k-1}\to 0 ;$$
we also obtain analogous exact sequences $\varphi_{k}^i$ for $1\leq i\leq k$ by exchanging the order of the factors of the tensor product. We have two natural quotient maps $\wedge:N^{\otimes k}\to \wedge^k N$ and $\bullet:N^{\otimes k}\to \Sym^k N$. We will denote by $\wedge^{k-1}N\wedge M$ the image of the composition $\wedge \circ(\sum_i \varphi_k^i)$; we will denote by $\Sym^{k-1}N\cdot M$ the image of the composition $\bullet \circ(\sum_i \varphi_k^i)$. 

\begin{lemma}
\label{lem_image_varphi1}
The image of $\wedge \circ \varphi_k^1$ is equal to $\wedge^{k-1}N\wedge M$. The image of $\bullet \circ \varphi_k^1$ is equal to $\Sym^{k-1}N\cdot M$.
\end{lemma}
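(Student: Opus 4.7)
The inclusion ``$\supseteq$'' is immediate in both statements, since $\wedge \circ \varphi_k^1$ is one of the summands whose images generate $\wedge^{k-1}N\wedge M$ (and similarly in the symmetric case). The plan is therefore to establish the reverse inclusion by showing that for every $i = 1,\dots,k$, the image of $\wedge\circ\varphi_k^i$ is already contained in the image of $\wedge\circ\varphi_k^1$, and analogously with $\bullet$ in place of $\wedge$.

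The idea is the usual symmetry principle for the quotients $\wedge^k N$ and $\Sym^k N$. Given a pure tensor $n_1\otimes\cdots\otimes n_{i-1}\otimes m\otimes n_{i+1}\otimes\cdots\otimes n_{k-1}$ in $M\otimes N^{\otimes k-1}$ in which $M$ sits in the $i$-th slot, its image under $\varphi_k^i$ is obtained by replacing $m$ with $\varphi(m)$. Applying $\wedge$, I would then invoke the fact that in $\wedge^k N$ the transposition of two adjacent factors changes sign: this lets me move $\varphi(m)$ from position $i$ to position $1$, picking up a factor of $(-1)^{i-1}$, and the resulting element is visibly in the image of $\wedge\circ\varphi_k^1$. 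Since pure tensors generate $M\otimes N^{\otimes k-1}$, this gives the desired containment. The symmetric statement follows in exactly the same way, except that in $\Sym^k N$ permutations of factors act trivially, so no sign appears.

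The main (and essentially only) point to check is that these sign/permutation manipulations are legitimate in the quotient, which is the very definition of $\wedge^k N$ and $\Sym^k N$ as quotients of $N^{\otimes k}$; there is no real obstacle. The argument is purely formal and does not use anything about $\varphi$ beyond the factorization $\varphi_k^i = \mathrm{id}^{\otimes(i-1)}\otimes\varphi\otimes\mathrm{id}^{\otimes(k-i)}$, so the same proof handles both cases simultaneously.
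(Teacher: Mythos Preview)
Your proof is correct and follows essentially the same approach as the paper: both reduce the statement to showing that, after applying $\wedge$ or $\bullet$, a pure tensor with $\varphi(m)$ in the $j$-th slot represents the same class as one with $\varphi(m')$ in the first slot. The only minor difference is in the alternating case: you invoke the transposition-sign relation in $\wedge^k N$ directly, while the paper works from the definition (tensors with repetitions vanish) and expands $(\varphi(m)+n_1)\otimes\cdots\otimes(\varphi(m)+n_1)\otimes\cdots$ to obtain the needed identity; since the sign-change relation is a standard consequence of the alternating property over any ring, your shortcut is harmless.
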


\begin{proof}
For both statements, we just need to show that an element of the form $n_1 \otimes n_2 \otimes \dotsc \otimes n_{k}$ such that $n_j = \varphi(m)$ for some $j$ and some $m \in M$ represents the same (respectively skew-symmetric or symmetric) equivalence class of an element $\varphi(m') \otimes n'_2 \otimes \dotsc \otimes n'_{k}$ for some $m' \in M, n'_i \in N$. For the first statement we observe that
\begin{multline*}
(\varphi(m) + n_1) \otimes n_2 \otimes \dotsc \otimes n_{j-1} \otimes (\varphi(m) + n_1) \otimes n_{j+1} \otimes \dotsc \otimes n_k = \\
= \varphi(m) \otimes n_2 \otimes \dotsc \otimes n_{j-1} \otimes \varphi(m) \otimes n_{j+1} \otimes \dotsc \otimes n_k + \\
+ n_1 \otimes n_2 \otimes \dotsc \otimes n_{j-1} \otimes n_1 \otimes n_{j+1} \otimes \dotsc \otimes n_k + \\
+ \varphi(m) \otimes n_2 \otimes \dotsc \otimes n_{j-1} \otimes n_1 \otimes n_{j+1} \otimes \dotsc \otimes n_k + \\
+ n_1 \otimes n_2 \otimes \dotsc \otimes n_{j-1} \otimes \varphi(m) \otimes n_{j+1} \otimes \dotsc \otimes n_k;
\end{multline*}
since the first two summands and the LHS of the equality correspond to tensors with repetitions, we conclude by choosing $m' = -m, n_i = n'_i$ for any $2 \leq i \neq j$ and $n'_j = n_1$.

Since $(n_1\otimes n_2\otimes \cdots \otimes n_k) - (n_j \otimes n_2 \otimes \cdots \otimes n_{j-1} \otimes n_1 \otimes n_{j+1} \otimes \cdots \otimes n_k)$ represents the zero class inside the symmetric product, for the second statement we simply choose $m' = m, n_i = n'_i$ for any $2 \leq i \neq j$ and $n'_j = n_1$. 
\end{proof}

The following lemma is well-known for vector spaces and vector bundles. For the sake of completeness we provide here a proof for modules over a ring. 
\begin{lemma}
\label{lem_wedge_blink_blink}
Any short exact sequence of modules over a ring
\[
M \mathrel{\mathop{\to}^{\mathrm{\varphi}}} N \to \coker\varphi \to 0
\]
induces, for any $k \in \mathbb{N}, k \geq 2$  an exact sequence
\[
\wedge^{k-1}N \wedge M \to \wedge^k N \to \wedge^k \coker \varphi \to 0.
\]
\end{lemma}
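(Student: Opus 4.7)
The plan is to identify the image of $\wedge^{k-1}N\wedge M\to\wedge^k N$ with the kernel of the natural map $\wedge^k N\to \wedge^k\coker\varphi$, which amounts to establishing an isomorphism
$$\wedge^k N\big/(\wedge^{k-1}N\wedge M)\;\simto\; \wedge^k\coker\varphi.$$
Once this is done, surjectivity at $\wedge^k\coker\varphi$ and exactness at $\wedge^k N$ both follow at once.

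First I would deal with the two easy points. Surjectivity of $\wedge^k N\to\wedge^k\coker\varphi$ is immediate because $N\onto \coker\varphi$ and pure wedges generate. Vanishing of the composition $\wedge^{k-1}N\wedge M\to\wedge^k N\to\wedge^k\coker\varphi$ is also straightforward: by \cref{lem_image_varphi1}, every element of $\wedge^{k-1}N\wedge M$ is a sum of pure wedges of the form $n_1\wedge\cdots\wedge n_{k-1}\wedge\varphi(m)$, and each of these maps to $\bar n_1\wedge\cdots\wedge\bar n_{k-1}\wedge 0=0$ in $\wedge^k\coker\varphi$.

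The core of the argument is to produce an inverse
$$\psi\colon \wedge^k\coker\varphi\longrightarrow \wedge^k N\big/(\wedge^{k-1}N\wedge M).$$
To construct $\psi$, I would define a $k$-multilinear alternating map on $(\coker\varphi)^{k}$ by the recipe $(\bar n_1,\dots,\bar n_k)\mapsto [n_1\wedge\cdots\wedge n_k]$, where each $n_i\in N$ is an arbitrary lift of $\bar n_i$. The only non-formal point—and what I expect to be the main obstacle—is independence of the lifts: two lifts of $\bar n_i$ differ by some $\varphi(m)$ with $m\in M$, and by multilinearity of $\wedge$ the discrepancy is a pure wedge with $\varphi(m)$ in the $i$-th slot. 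Such a wedge lies in $\wedge^{k-1}N\wedge M$ because, by \cref{lem_image_varphi1}, that submodule coincides with the image of $\wedge\circ\varphi_k^1$ and hence, after a sign change coming from the permutation moving the $i$-th slot to the first, also contains the wedges with $\varphi(m)$ in any other fixed slot. Multilinearity and the alternating property are then inherited, so the map factors through $\wedge^k\coker\varphi$ and defines $\psi$.

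Finally I would verify that $\psi$ and the canonical map are mutual inverses. The composition $\wedge^k N\to\wedge^k\coker\varphi\to\wedge^k N/(\wedge^{k-1}N\wedge M)$ sends $n_1\wedge\cdots\wedge n_k$ to $[n_1\wedge\cdots\wedge n_k]$ (take the tautological lifts), hence is the projection; the composition in the other order acts as the identity on pure wedges, which generate. This identifies the kernel of $\wedge^k N\to\wedge^k\coker\varphi$ with $\wedge^{k-1}N\wedge M$ and yields the required exact sequence.
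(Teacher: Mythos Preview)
Your proof is correct. The paper takes a slightly different route: instead of constructing an explicit inverse $\psi$ by lifting, it works at the level of tensor powers. It invokes \cite[II, \textsection 3.6, Proposition~6]{Bourbaki} for the fact that the kernel of the induced map $N^{\otimes k}\to C^{\otimes k}$ is exactly $\Im(\sum_i\varphi_k^i)$, then reads off the kernel of $\wedge^k N\to\wedge^k C$ from the commutative square linking the exact row $0\to\Im(\sum_i\varphi_k^i)\to N^{\otimes k}\to C^{\otimes k}\to 0$ to $\wedge^k N\to\wedge^k C\to 0$: the kernel downstairs is the image under the alternation $\wedge$ of the kernel upstairs, which by definition is $\wedge^{k-1}N\wedge M$. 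Your approach is more elementary and self-contained, avoiding the external citation at the price of checking well-definedness of the lift directly (which, as you note, is precisely the content of \cref{lem_image_varphi1}); the paper's approach is more structural and makes the parallel with \cref{lem_sym_blink_blink} immediate, since the same diagram with the symmetrization map in place of $\wedge$ works verbatim.
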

\begin{proof}
Let $C:=\coker \varphi$. By definition the module $\wedge^k C$ is the quotient of $C^{\otimes k}$ by the submodule generated by tensors with repetitions. We have a naturally induced map $N^{\otimes k} \to C^{\otimes k}$, whose kernel is $\Im(\sum_i \varphi_k^i)$, see e.g.\ \cite[II, \textsection 3.6, Proposition 6]{Bourbaki}; composing it with the projection onto $\wedge^k C$ we get an alternating map which factorizes via $N^{\otimes k} \to \wedge^k N$, yielding a map $\wedge^k N \to \wedge^k C$. We have the following commutative diagram
\[
\xymatrix{0 \ar[r] & \Im(\sum_i \varphi_k^i) \ar[r] & N^{\otimes k} \ar[r]\ar[d]^-{\wedge} & C^{\otimes k} \ar[r]\ar[d] & 0\\
& & \wedge^k N\ar[d] \ar[r] & \wedge^k C \ar[d]\ar[r] & 0 \\
& & 0 & 0}
\]
and it straightforward to see that the kernel of $\wedge^k N \to \wedge^k C$ is $\wedge^{k-1}N\wedge M$.
\end{proof}

\begin{lemma}
\label{lem_sym_blink_blink}
Any short exact sequence of modules over a ring
\[
M \mathrel{\mathop{\to}^{\mathrm{\varphi}}} N \to \coker\varphi \to 0
\]
induces, for any $k \in \mathbb{N}, k \geq 2$ an exact sequence
\[
\Sym^{k-1}N \cdot M \to \Sym^k N \to \Sym^k \coker \varphi \to 0.
\]
\end{lemma}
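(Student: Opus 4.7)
The plan is to mirror the proof of the preceding \cref{lem_wedge_blink_blink} verbatim, replacing everywhere the alternating quotient $\wedge$ with the symmetric quotient $\bullet$, and using \cref{lem_image_varphi1} in the symmetric variant to identify the kernel.

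More precisely, set $C:=\coker\varphi$. I would start from the surjection $N^{\otimes k}\to C^{\otimes k}$ whose kernel is $\Im(\sum_i \varphi_k^i)$ (the same citation to \cite[II, \textsection 3.6, Proposition 6]{Bourbaki} applies, since this is a purely tensorial fact and has nothing to do with the symmetric/alternating quotient). Composing with the projection $C^{\otimes k}\to \Sym^k C$ yields a symmetric multilinear map (all transpositions act trivially on the image), which therefore factors through the universal symmetric quotient $N^{\otimes k}\to \Sym^k N$. This gives a natural surjection $\Sym^k N \to \Sym^k C$, fitting into the commutative diagram
\[
\xymatrix{0 \ar[r] & \Im(\sum_i \varphi_k^i) \ar[r] & N^{\otimes k} \ar[r]\ar[d]^-{\bullet} & C^{\otimes k} \ar[r]\ar[d] & 0\\
& & \Sym^k N\ar[d] \ar[r] & \Sym^k C \ar[d]\ar[r] & 0 \\
& & 0 & 0}
\]
exactly as in the previous lemma.

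It then remains to identify the kernel of $\Sym^k N \to \Sym^k C$. A standard diagram chase shows this kernel is the image under $\bullet$ of $\Im(\sum_i \varphi_k^i)$, which is by definition $\Sym^{k-1}N\cdot M$. The content here is that the image of $\bullet\circ(\sum_i \varphi_k^i)$ equals the image of $\bullet\circ \varphi_k^1$; but this is precisely the second statement of \cref{lem_image_varphi1}, since inside $\Sym^k N$ one can permute factors freely and so each summand $\bullet \circ \varphi_k^i$ contributes the same image. This yields the desired exact sequence $\Sym^{k-1}N \cdot M \to \Sym^k N \to \Sym^k \coker\varphi \to 0$.

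No step is really an obstacle, since the combinatorial core of the argument is already packaged in \cref{lem_image_varphi1}; the only mildly delicate point is the verification that the composition $N^{\otimes k}\to C^{\otimes k}\to \Sym^k C$ is symmetric as a multilinear map on $N$, but this is immediate from the fact that $\varphi$ is $R$-linear and the quotient $C^{\otimes k}\to \Sym^k C$ is symmetric in its own arguments, so symmetry transports back along the componentwise surjection $N\to C$.
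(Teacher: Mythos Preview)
Your proposal is correct and follows exactly the same approach as the paper: the paper's proof simply states ``We argue as in the proof of \cref{lem_wedge_blink_blink}'' and displays the identical commutative diagram you wrote down, with $\bullet$ in place of $\wedge$. Your additional remarks about \cref{lem_image_varphi1} and the symmetry of the composite map are accurate elaborations of details the paper leaves implicit.
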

\begin{proof}
We argue as in the proof of \cref{lem_wedge_blink_blink} by looking at the commutative diagram
\[
\begin{gathered}[b]
\xymatrix{0 \ar[r] & \Im(\sum_i \varphi_k^i) \ar[r] & N^{\otimes k} \ar[r]\ar[d]^-{\bullet} & C^{\otimes k} \ar[r]\ar[d] & 0\\
& & \Sym^k N\ar[d] \ar[r] & \Sym^k C \ar[d]\ar[r] & 0 \\
& & 0 & 0} \\[-\dp\strutbox]
\end{gathered}
\qedhere
\] 
\end{proof}

\section{Some resolutions on determinantal varieties of minimal corank}
\label{sec3}

The goal of this section is to obtain \cref{thm_res_determ_ext_normal}, which is the universal affine version of Theorem B. In order to do so, we recall the Geometric Method, as it is presented in \cite{Weyman}, and we use it to obtain locally free resolutions of certain sheaves on determinantal varieties. The description of these sheaves in terms of canonical sheaves on determinantal varieties will be given in \cref{prop_expr_Wj_cokerK}, and a special case is described in \cref{sec_j1}. This constitutes the technical heart of the paper.

\subsection{Kempf collapsings and the Geometric Method}

\label{sec_weyman_resolution}
Let $M_{m,n}$ be the space of $m\times n$ matrices seen as an affine variety, with $m\geq n$. For any $r \leq n$ the space $Y_r\subset M_{m,n}$ of matrices of rank at most $r$ is a codimension $(m-r)(n-r)$ affine subvariety of $M_{m,n}$, singular along $Y_{r-1}$, usually referred to as \emph{determinantal variety}. Notice that this is not the same as (standard or linear) determinantal loci, that we will define in \cref{sec_applications}.

The variety $Y_r$ admits a resolution of singularities provided by the total space of a vector bundle over an algebraic variety: this is an instance of a particular situation known as \emph{Kempf collapsing} (see \cite{Kem}). Thinking of the elements of $M_{m,n}$ as maps $(\CC^n)^\vee \otimes \CC^m$, on the Grassmannian $\Gr(r,\CC^m)$ we consider the vector bundle $(\CC^n)^\vee \otimes \cU = \cU^n$. Its total space $\Tot(\cU^n)$ can be thought of as a subvariety of $\Tot(\cO_{\Gr(r,m)}^{mn})\cong M_{m,n}\times \Gr(r,m)$. We are in the following situation:

\begin{equation}
\label{kempfcoll}
\xymatrix{
\Tot(\cU^n)\rule{2pt}{0pt} \ar@{^{(}->}[dr] \ar@/^/@<1ex>[rrd]^-{{p}} \ar@/_/@<-1ex>[drd]_-{{q}}\\
& M_{m,n}\times \Gr(r,m) \ar[r]_-{\tilde{p}} \ar[d]^-{\tilde{q}} & \Gr(r,m) \\
& M_{m,n}
}
\end{equation}

A point $V_m \subset \CC^m$ should be thought of as an $m$-dimensional space containing the image of a matrix, and a point in $\Tot(\cU^n)$ over $V_m \subset \CC^m$ should be thought of as the choice of the images of a basis in $\CC^n$.

Let us denote by $\xi:=\cO_{\Gr(r,m)}^{mn}/\cU^n=\cQ^n$ and by $e:=\rank(\xi)=n(m-r)$. Following \cite[Chapter 5]{Weyman}, there exists a \emph{Koszul complex} of the form
\begin{multline}
\label{koszulComplex}
0 \to \wedge^{e}\xi^\vee \otimes \cO_{M_{m,n}\times \Gr(r,m)}\to \wedge^{e-1}\xi^\vee \otimes \cO_{M_{m,n}\times \Gr(r,m)}\to \cdots \to \\
\to \xi^\vee \otimes \cO_{M_{m,n}\times \Gr(r,m)} \to \cO_{M_{m,n}\times \Gr(r,m)} \to 0,
\end{multline}
exact everywhere but in the last step, providing an $\cO_{M_{m,n}\times \Gr(r,m)}$-locally free resolution of $\cO_{\Tot(\cU^n)}$. 

Weyman's Geometric Method (\cite{Weyman}) consists in computing various $\cO_{M_{m,n}}$-locally free resolutions of sheaves on $Y_r$ as pushforwards via $\tilde{q}$ of the Koszul complex \cref{koszulComplex} twisted by the pullback of suitable sheaves on $\Gr(r,m)$. Let us be more precise. We will be interested in the coherent sheaves $\tilde{q}_*({\tilde{p}}^*(W_j) \otimes \cO_{\Tot(\cU^n)})$ on $Y_r$ for
\begin{equation*}
W_j:=\det(\cU)^{m-r-j}\otimes \wedge^j\cQ, \qquad \quad 0\leq j\leq m-r.
\end{equation*}

By computing the pushforward $\tilde{q}_*$ of the Koszul complex \cref{koszulComplex} twisted by ${\tilde{p}}^*(W_j)$ we will be able to construct a complex $\cF^{W_j}_\bullet: \cdots \to \cF^{W_j}_{-1}\to \cF^{W_j}_0 \to \cF^{W_j}_{1}\to \cdots$ whose terms are of the form:
$$ \cF^{W_j}_{u}:=\bigoplus_{l-p=u} H^l(\Gr(r,m),\wedge^p \xi^\vee \otimes {W_j})\otimes \cO_{M_{m,n}}(-p) ,$$
where $\cO_{M_{m,n}}(-p)$ is a trivial twist on the affine space $M_{m,n}$ which corresponds to the $p$-th twisted tautological bundle on $\bP(M_{m,n})$. The meaning of the twist is essentially the following: a morphism $\phi:W\otimes \cO_{M_{m,n}}(-i) \to V \otimes \cO_{M_{m,n}}(j)$, where $V,W$ are vector spaces, is given by an element $$\phi\in W^\vee \otimes V \otimes \Sym^{j+i}(M_{m,n}^\vee)\subset W^\vee \otimes V \otimes \Sym(M_{m,n}^\vee)=W^\vee \otimes V \otimes \Gamma_*(M_{m,n},\cO_{M_{m,n}}).$$

\subsection{Pushforward of \texorpdfstring{$W_j$}{W\_j}}

In this section we want to study the pushforward to $Y_r$ of the vector bundle $W_j:= \det(\cU)^{m-r-j}\otimes \wedge^j \cQ$ for $0\leq j \leq m-r$ and $r=n-1$. Let us denote by
$$ \cI_j:=\{ \lambda=[\lambda_{m-r}\geq \cdots \geq \lambda_1]\mid n\geq \lambda_{m-r}, \lambda_{j+1}\geq r,0\geq \lambda_j,\lambda_1\geq -1\}. $$
Let us also define
$$
\tilde{c}_{j\lambda}^\mu:=\begin{cases}
0 \mbox{ if }\mu_1<0\\
c_{j\lambda}^\mu \mbox{ if }\mu_1\geq 0
\end{cases}
$$
and
$$
\tilde{\lambda}:=[\lambda_{m-r}-r,\cdots,\lambda_{j+1}-r,0^r,\lambda_j,\cdots,\lambda_1],
$$
where the coefficients $c^\mu_{j\lambda}$ are defined in \cref{tensor_exterior}.

\begin{proposition}
\label{prop_cohom_gp_ext}
Let $n\leq m$, $r=n-1$ and $c=m-r=\codim_{M_{m,n}}Y_r$; then $\cF^{W_j}_u=0$ for $u>0$ and $u<-m+r$. For $0\leq u \leq m-r$ we have the following isomorphisms:
$$
\begin{aligned}
\cF_{-u}^{W_j} = \bigoplus_{\lambda\in \cI_j,|\lambda|=u+r(m-r-j)-j}\tilde{c}_{j\lambda}^\mu S^{\mu^t}\CC^n \otimes S^{\tilde{\lambda}}(\CC^m)^\vee \otimes \cO(-u-r(m-r-j)).
\end{aligned}
$$
\end{proposition}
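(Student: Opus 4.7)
My plan is to decompose $\wedge^p\xi^\vee\otimes W_j$ into irreducible $\GL(m)$-equivariant bundles on $\Gr(r,m)$ (with $\GL(n)$-multiplicities), apply Bott--Borel--Weil summand by summand, and reorganize the surviving contributions. First, the exterior-of-tensor-product formula applied to $\xi^\vee\cong\cQ^\vee\otimes\CC^n$ gives
\[
\wedge^p\xi^\vee=\bigoplus_{|\nu|=p}S^\nu\cQ^\vee\otimes S^{\nu^t}\CC^n,
\]
summed over partitions $\nu$ with at most $m-r$ parts and $\nu_1\leq n$. Using the identity $\wedge^j\cQ\cong\det(\cU)^{-1}\otimes\wedge^{m-r-j}\cQ^\vee$ together with the tensor-by-exterior-power rule applied to $S^\nu\cQ^\vee\otimes\wedge^{m-r-j}\cQ^\vee$, this becomes
\[
\wedge^p\xi^\vee\otimes W_j=\det(\cU)^{m-r-j-1}\otimes\bigoplus_{\nu,\rho}c^\rho_{m-r-j,\nu}\,S^\rho\cQ^\vee\otimes S^{\nu^t}\CC^n,
\]
with $|\nu|=p$ and $\rho$ running over the partitions allowed by the Pieri rule.

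Next, I would apply Bott--Borel--Weil to each irreducible summand $\det(\cU)^{m-r-j-1}\otimes S^\rho\cQ^\vee$. Its weight on $\Gr(r,m)$ has a constant $\SL(r)$-block (from $\det\cU$) and an $\SL(m-r)$-block equal to $\rho$, so after adding $\delta=[m,\dots,1]$ the first $r$ entries of $\mu+\delta$ form a run of $r$ consecutive integers while the last $m-r$ are the $\rho_k+k$. The hypothesis $r=n-1$ keeps the combinatorics under control: the constraint $\nu_1\leq n=r+1$ forces each entry of $\rho$ to take only a very small number of possible values, so the regularity condition that $\mu+\delta$ have pairwise distinct entries becomes very restrictive. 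A case analysis shows that the surviving $\rho$ are in bijection with $\cI_j$: the top $m-r-j$ entries of $\lambda\in\cI_j$, taking values in $\{r,r+1\}$, encode the pattern of the large entries of $\rho$, while the bottom $j$ entries of $\lambda$, in $\{-1,0\}$, encode the pattern of the small ones after absorbing the $\det(\cU)^{-1}$ shift. The map $\lambda\mapsto\tilde\lambda$ is precisely designed so that the sorted weight $\sigma(\mu+\delta)$ equals $\tilde\lambda+\delta$.

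Finally, I would verify the bookkeeping: the Weyl element $\sigma$ has length $l$ satisfying $l-p=-u$ with $|\lambda|=u+r(m-r-j)-j$, so Bott--Borel--Weil yields the cohomology $S^{\tilde\lambda}(\CC^m)^\vee$ in the correct degree; summing over the pairs $(\nu,\rho)$ giving rise to the same $\lambda$ converts $c^\rho_{m-r-j,\nu}S^{\nu^t}\CC^n$ into $\tilde c^\mu_{j\lambda}S^{\mu^t}\CC^n$ (with the vanishing $\tilde c^\mu_{j\lambda}=0$ for $\mu_1<0$ encoding exactly the Bott vanishing), and the twist $\cO_{M_{m,n}}(-p)$ becomes $\cO(-u-r(m-r-j))$ via $p=u+r(m-r-j)$. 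The boundary vanishing $\cF^{W_j}_u=0$ for $u\notin[-(m-r),0]$ then follows from the bounds $r(m-r-j)-j\leq|\lambda|\leq(r+1)(m-r-j)$ imposed by $\lambda\in\cI_j$. The hard part will be the regularity analysis itself: showing that the sorting permutation exists precisely for the weights parameterized by $\cI_j$, that its length matches the claimed cohomological degree, and that the sorted weight minus $\delta$ equals $\tilde\lambda$ entry by entry, all of which requires an explicit case-by-case interleaving of the constant $\SL(r)$-block with the allowed values of $\rho_k+k$.
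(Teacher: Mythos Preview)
Your proposal is correct and follows essentially the same route as the paper: Cauchy decomposition of $\wedge^p\xi^\vee$, a Pieri rule for the tensor with the $\wedge^j\cQ$ factor of $W_j$, and a Bott--Borel--Weil analysis on each irreducible summand, the only difference being your cosmetic detour through $\wedge^j\cQ\cong\det\CC^m\otimes\det(\cU)^{-1}\otimes\wedge^{m-r-j}\cQ^\vee$ (note the $\det\CC^m$ you dropped, which you need for the $\GL(m)$-equivariant bookkeeping). The paper instead tensors $S^\mu\cQ^\vee$ directly with $\wedge^j\cQ$ and indexes by the resulting pair $(\mu,\lambda)$; this is related to your $(\nu,\rho)$ by $\nu=\mu$, $\rho=\lambda+(1^{m-r})$, and leads to the identical regularity condition $\{\lambda_k+k\}\cap[j+1,r+j]=\emptyset$, the cohomological degree $r(m-r-j)$, and the output weight $\tilde\lambda$ without the extra translation step your parametrization requires.
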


\begin{proof}
We need to compute the cohomology groups of the bundle $$\wedge^p \xi^\vee \otimes W_j= \wedge^p (\CC^n \otimes \cQ^\vee) \otimes \wedge^j \cQ(-m+r+j)$$ over the Grassmannian $\Gr(r,m)$. By applying the formula for the exterior power of a tensor product to $\wedge^p \xi^\vee$ we obtain that $\wedge^p (\CC^n\otimes \cQ^\vee)=\wedge^p (\CC^n\otimes [0,\cdots,0;1,\cdots,0])$ is given by a direct sum of terms of the form $S^{\mu^t} \CC^n \otimes [0,\cdots,0;\mu_{m-r},\cdots,\mu_1]$ with $\sum_i \mu_i=p$, $n\geq \mu_{m-r}\geq \cdots \geq \mu_1\geq 0$.
When we tensor each of these terms by $\wedge^j \cQ(-m+r+j)$ we obtain a direct sum of terms of the form $S^{\mu^t}\CC^n\otimes [-m+r+j,\cdots,-m+r+j;\lambda_{m-r},\cdots,\lambda_1]$ where $\lambda$ is a decreasing sequence of integers with each integer $\lambda_i$ equal to $\mu_i$ except for $j$ indexes $\iota_1,\cdots,\iota_j$ for which $\lambda_{\iota_1}=\mu_{\iota_1}-1,\cdots,\lambda_{\iota_j}=\mu_{\iota_j}-1$. Notice that $0\leq p\leq n(m-r)$. 

In order to compute the cohomology, let us add $\delta$ to our sequence to obtain $[r+j,r+j-1,\cdots,j+1;\lambda_{m-r}+m-r,\cdots,\lambda_1+1]$. The terms which will contribute to the cohomology are those such that $\{\lambda_{m-r}+m-r,\cdots,\lambda_1+1\} \cap [j+1,r+j]=\emptyset$. Thus $\lambda_{j+1}\geq n-1$ and either $\lambda_{j}\geq n$ or $\lambda_j\leq 0$. The first case is not possible because otherwise we would obtain $\lambda_{m-r}=\cdots=\lambda_j=n$ and $\lambda_{j-1},\cdots,\lambda_1\geq -1$; this implies that $\mu_{m-r}=n+1$ which is not allowed since then $S^{\mu^t}\CC^n=0$. So we deduce that $0\geq\lambda_j\geq \cdots\geq \lambda_1\geq -1$. In all cases the permutation that transforms $[r+j,r+j-1,\cdots,j+1;\lambda_{m-r}+m-r,\cdots,\lambda_1+1]$ into a strictly decreasing sequence has length $r(m-r-j)$, so the cohomology is concentrated in degree $r(m-r-j)$. Let us find all terms which give a non-trivial cohomology group.
\medskip

In order to conclude, first notice that $\lambda$ belongs to $\cI_j$ (the fact that $\lambda_{m-r}\leq n$ is once more a consequence of the fact that otherwise $S^{\mu^t}\CC^n=0$). Then we want to recover all $\mu$'s such that $S^\lambda \cQ^\vee \subset S^\mu \cQ^\vee \otimes \wedge^j \cQ$ with the extra condition that $\mu_1\geq 0$. However this is the same as asking both that $S^\mu \cQ\subset S^\lambda \cQ \otimes \wedge^j \cQ$ and $\mu_1\geq 0$. These two conditions are encoded in the fact that $\tilde{c}_{j\lambda}^\mu\neq 0$, which is the same as $\tilde{c}_{j\lambda}^\mu=1$. Finally one easily sees that the cohomology of $[-m+r+j,\cdots,-m+r+j;\lambda_{m-r},\cdots,\lambda_1]$ over the Grassmannian is equal to $S^{(\lambda_{m-r}-r,\cdots,\lambda_{j+1}-r,0^r,\lambda_j,\cdots,\lambda_1)}(\CC^m)^\vee$ by applying Bott--Borel--Weil Theorem.
\end{proof}

\begin{theorem}
\label{thm_res_determ_ext_normal}
Let $r=n-1$; then the complex $$0\to \cF^{W_j}_\bullet \to q_*p^* W_j \to 0$$ is exact and $\GL(m)\times \GL(n)$-equivariant. 
\end{theorem}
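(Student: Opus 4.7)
The plan is to deduce the theorem from Weyman's Geometric Method, with the cohomological heavy lifting already performed in \cref{prop_cohom_gp_ext}.

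First, I would invoke Weyman's Basic Theorem (\cite[Theorem 5.1.2]{Weyman}) applied to the Kempf collapsing \cref{kempfcoll}. This general result, starting from the Koszul resolution \cref{koszulComplex} of $\cO_{\Tot(\cU^n)}$ as an $\cO_{M_{m,n}\times \Gr(r,m)}$-module, produces a complex $\cF^{W_j}_\bullet$ of free $\cO_{M_{m,n}}$-modules whose terms are exactly the ones described in \cref{sec_weyman_resolution}, obtained by taking a Cartan--Eilenberg resolution of the Koszul complex twisted by $\tilde{p}^*W_j$ and pushing it down via $\tilde{q}$. Moreover, the theorem gives canonical isomorphisms $H_i(\cF^{W_j}_\bullet)\cong R^i q_*(p^*W_j)$ for every $i$, with $q_*(p^*W_j)$ placed in homological degree zero.

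Second, I would use the vanishing established in \cref{prop_cohom_gp_ext}: the terms $\cF^{W_j}_u$ vanish for $u>0$. By Weyman's identification of homology with higher direct images, this forces $R^i q_*(p^*W_j)=0$ for all $i>0$. Combined with the vanishing $\cF^{W_j}_u=0$ for $u<-(m-r)$ (also from \cref{prop_cohom_gp_ext}), the complex is bounded with non-zero terms confined to $-(m-r)\leq u\leq 0$ and its only non-trivial homology, at position $u=0$, is $q_*(p^*W_j)$. This is precisely the exactness of $0\to \cF^{W_j}_\bullet \to q_*(p^*W_j)\to 0$.

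Third, for the equivariance, the group $\GL(m)\times \GL(n)$ acts naturally on $M_{m,n}\cong \Hom(\CC^n,\CC^m)$, while $\GL(m)$ acts on $\Gr(r,m)$ and on all tautological bundles $\cU,\cQ$. The bundle $W_j=\det(\cU)^{m-r-j}\otimes \wedge^j\cQ$ and the subbundle $\cU^n=\CC^n\otimes \cU$ (and hence $\xi=\cQ^n$) are both $\GL(m)\times \GL(n)$-equivariant, so the twisted Koszul complex \cref{koszulComplex} is $\GL(m)\times \GL(n)$-equivariant. Since every step of Weyman's construction (tensoring, taking cohomology, pushing forward, assembling the complex) is functorial in equivariant data, the resulting complex $\cF^{W_j}_\bullet$ together with all its differentials inherits a canonical $\GL(m)\times \GL(n)$-equivariant structure.

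The main obstacle has effectively been removed by \cref{prop_cohom_gp_ext}: that proposition simultaneously computes the terms of Weyman's complex and establishes the vanishing needed to guarantee exactness. The theorem itself is really a consolidation step; the only mild subtlety is the correct interpretation of the indexing, namely that vanishing of the complex above position zero translates, via Weyman's identification of homology with $R^\bullet q_*$, into the vanishing of higher direct images, which is what upgrades the complex to a genuine resolution.
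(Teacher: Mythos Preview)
Your proposal is correct and follows essentially the same route as the paper: invoke \cite[Theorem~5.1.2]{Weyman}, feed in the vanishing $\cF^{W_j}_u=0$ for $u>0$ from \cref{prop_cohom_gp_ext} to conclude that the complex is a resolution of $q_*p^*W_j$, and deduce equivariance from the equivariance of the input data. The paper's proof is terser---it simply notes that $q$ is birational, cites Weyman~5.1.2 and the vanishing, and cites \cite[Theorem~5.4.1]{Weyman} for equivariance---but the content is the same as what you wrote out; your functoriality argument for the $\GL(m)\times\GL(n)$-action is precisely what Weyman's 5.4.1 packages.
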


\begin{proof}
The projection $q:\Tot(\cU^n) \to Y_r$ is birational. Then we can apply \cite[Theorem 5.1.2]{Weyman}. Since $\cF^{W_j}_u=0$ for $u>0$ by \cref{prop_cohom_gp_ext} then the complex $\cF^{W_j}_\bullet$ is left exact and resolves $q_*p^*W_j$. Since all constructions are $\GL(m)\times \GL(n)$-equivariant, the same holds for $\cF^{W_j}_\bullet$ by \cite[Theorem 5.4.1]{Weyman}.
\end{proof}

In particular, the first two terms of the complex $\cF^{W_j}_\bullet$ are given by:
$$ \cF^{W_j}_0= \cO_{M_{m,n}}(-r(m-r-j))\otimes \det(\CC^n)^{m-r-j} \otimes \Hom(\Sym^{m-r-j}\CC^n,\wedge^j \CC^m);$$

\begin{equation}
\label{eq_FWjminus1_expression}
\begin{split}
    \cF^{W_j}_{-1} = {} & \cO_{M_{m,n}}(-r(m-r-j)-1)\otimes\det(\CC^n)^{m-r-j}\otimes \\ & \Big[ \Hom(\Sym^{m-r-j-1}\CC^n, \wedge^{j-1}\CC^m)  \oplus \\ &
    \oplus \Hom(S^{(m-r-j,0^{n-2},-1)}\CC^n, \wedge^{j-1}\CC^m) \oplus \\ &
    \oplus \Hom(\Sym^{m-r-j-1}\CC^n,S^{(1^j,0^{m-j-1},-1)}\CC^m)\Big].
    \end{split}
    \end{equation}

Notice that
\begin{equation}
    \label{eq_Aj_dec}
    \begin{split}
\Hom(\Sym^{m-r-j}\CC^n,\CC^n)= {} & \Sym^{m-r-j}(\CC^n)^\vee\otimes \CC^n \\ {} = {} & \Sym^{m-r-j-1}(\CC^n)^\vee \oplus S^{(m-r-j,0^{n-2},-1)}(\CC^n)^\vee 
\end{split}
\end{equation}
and
\begin{equation}
    \label{eq_Bj_dec}
\Hom(\CC^m,\wedge^{j}\CC^m) = (\CC^m)^\vee \otimes \wedge^{j}\CC^m = \wedge^{j-1}\CC^m \oplus S^{(1^j,0^{m-j-1},-1)}\CC^m.
\end{equation}

Let us denote by $d_{1}:\cF^{W_j}_{-1} \to\cF^{W_j}_0$. In what follows we will repeatedly use the fact that two sheaves are isomorphic if there exists a morphism which induces an isomorphism at each stalk. We want to precisely identify what $q_*({\tilde{p}}^*(W_j) \otimes \cO_{\Tot(\cU^n)})$ is. We will follow three steps, summarised in the following
\begin{strategy}
\label{threesteps}
\begin{itemize}
    \item[i)] show that $\cF^{W_j}_{-1}=L\otimes [ (\cA_j \oplus \cB_j) / \cC_j]$ for three (locally free) $\GL(m)\times \GL(n)$-equivariant sheaves $\cA_j, \cB_j,\cC_j$, and a line bundle $L$ (\cref{firststep});
    \item[ii)] show that there exists a $\GL(m)\times \GL(n)$-invariant morphism $\tilde{d_1}^j: \cA_j\oplus \cB_j \to \cF^{W_j}_0 \otimes L^{-1}$, of which we understand what the cokernel is (\cref{lem_ex_direct_sum});
    \item[iii)] show that the kernel of $\tilde{d_1}^j$ is $\cC_j$ and the induced morphism on the quotient is $d_1^j$ (\cref{lem_tilde_d1}).
\end{itemize}
\end{strategy}

From these we will obtain an explicit description of $q_*({\tilde{p}}^*(W_j) \otimes \cO_{\Tot(\cU^n)})$ in terms of particular sheaves on the affine space $M_{m,n}\cong \Hom(\CC^n,\CC^m)$, which we now introduce. Recall that, by definition of affine variety, $H^0(M_{m,n},\cO_{M_{m,n}})=\Sym(\Hom(\CC^n,\CC^m)^\vee)$. This space of sections has a natural grading which, as mentioned above, allows us to define trivial twists $\cO_{M_{m,n}}(i)$ of the structure sheaf $\cO_{M_{m,n}}$; the space of sections of such twists is $$H^0(M_{m,n},\cO_{M_{m,n}}(i))=\Sym^i(\Hom(\CC^n,\CC^m)^\vee).$$
Let us define the tautological morphism $t_\id:\CC^n\otimes \cO_{M_{m,n}}(-1) \to \CC^m \otimes \cO_{M_{m,n}}$ as the morphism associated to
\[
\id_{Hom(\CC^n,\CC^m)} \in \End(Hom(\CC^n,\CC^m)) = \Sym^1(Hom(\CC^n,\CC^m)^\vee) \otimes Hom(\CC^n,\CC^m).
\]
The restriction $(t_\id)_f$ of $t_\id$ over a fibre $f\in M_{m,n}$ is tautologically given by $f:\CC^n \to \CC^m$. We will denote by $\coKer$ the cokernel sheaf of $t_\id$. Notice that the kernel of $t_\id$ is a torsion-free sheaf which is generically zero, so it vanishes; thus we have an exact sequence of $\cO_{M_{m,n}}$-sheaves:
\begin{equation}
    \label{eq_def_coker}
0\to \CC^n \otimes \cO_{M_{m,n}} \to \CC^m \otimes \cO_{M_{m,n}} \to \coKer\to 0.
\end{equation}
Similarly, from the dual morphism $t_\id^\vee$ we can define a sheaf $K$ which satisfies the following exact sequence:
\begin{equation}
    \label{eq_def_K}
    0\to \coKer^\vee\to {\CC^m}^\vee \otimes \cO_{M_{m,n}} \to {\CC^n}^\vee \otimes \cO_{M_{m,n}} \to K \to 0,
\end{equation}
where $\coKer^\vee=\cHom_{\cO_{M_{m,n}}}(\coKer,\cO_{M_{m,n}})$ and $K=\cExt^1_{\cO_{M_{m,n}}}(\coKer,\cO_{M_{m,n}})$.

Having introduced $\coKer$ and $K$, we are ready to go on with \cref{threesteps}. 

\begin{lemma}
\label{firststep}
$\cF^{W_j}_{-1}$ admits the following $\GL(m)\times \GL(n)$-equivariant description: $$\cF^{W_j}_{-1}=\cO_{M_{m,n}}(-r(m-r-j))\otimes \det(\CC^n)^{m-r-j}\otimes (\cA_j\oplus \cB_j/\cC_j),$$
where
$$\cA_j:=\Sym^{m-r-j}(\CC^n)^\vee \otimes \CC^n \otimes \wedge^{j-1}\CC^m(-1) ,$$
$$ \cB_j:=\Sym^{m-r-j-1}(\CC^n)^\vee \otimes (\CC^m)^\vee\otimes \wedge^{j}\CC^m (-1) ,$$
$$ \cC_j:=\Sym^{m-r-j-1}(\CC^n)^\vee \otimes \wedge^{j-1}\CC^m(-1) .$$
\end{lemma}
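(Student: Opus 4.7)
The approach is purely representation-theoretic: once the common twist $L := \cO_{M_{m,n}}(-r(m-r-j))\otimes \det(\CC^n)^{m-r-j}$ is factored out, both sides are direct sums of copies of $\cO_{M_{m,n}}(-1)$ tensored with fixed $\GL(m)\times \GL(n)$-representations, so it suffices to produce an equivariant isomorphism of the corresponding constant representations. The motivation for presenting $\cF^{W_j}_{-1}$ in the slightly contrived form $L\otimes(\cA_j\oplus \cB_j)/\cC_j$, rather than as a plain direct sum of three irreducibles as in \eqref{eq_FWjminus1_expression}, is that $\cA_j$ and $\cB_j$ admit natural equivariant morphisms into $\cF^{W_j}_0\otimes L^{-1}$ whose common obstruction to being independent lives in $\cC_j$; this presentation is therefore the right preparation for steps ii)--iii) of the strategy above.

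Concretely, applying \eqref{eq_Aj_dec} to the middle factor of $\cA_j$ yields a canonical $\GL(n)$-equivariant decomposition
\[
\cA_j \;=\; \Sym^{m-r-j-1}(\CC^n)^\vee \otimes \wedge^{j-1}\CC^m(-1) \;\oplus\; S^{(m-r-j,0^{n-2},-1)}(\CC^n)^\vee \otimes \wedge^{j-1}\CC^m(-1),
\]
whose first summand is precisely $\cC_j$, embedded via the canonical section $\iota_\cA$ of the Pieri contraction $\Sym^{m-r-j}(\CC^n)^\vee \otimes \CC^n \to \Sym^{m-r-j-1}(\CC^n)^\vee$. Analogously, \eqref{eq_Bj_dec} gives
\[
\cB_j \;=\; \Sym^{m-r-j-1}(\CC^n)^\vee \otimes \wedge^{j-1}\CC^m(-1) \;\oplus\; \Sym^{m-r-j-1}(\CC^n)^\vee \otimes S^{(1^j,0^{m-j-1},-1)}\CC^m(-1),
\]
whose first summand is once more $\cC_j$, via the embedding $\iota_\cB$ coming from the interior-product map $(\CC^m)^\vee \otimes \wedge^j\CC^m \to \wedge^{j-1}\CC^m$.

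Hence $\cC_j$ has multiplicity exactly two in $\cA_j\oplus \cB_j$. I would then embed it diagonally via $c\mapsto \bigl(\iota_\cA(c),-\iota_\cB(c)\bigr)$; by Schur's lemma the cokernel is canonically $\cC_j \oplus \cA_j' \oplus \cB_j'$, where $\cA_j'$ and $\cB_j'$ are the second summands appearing in the two displays above. Tensoring by $L$ and using $\Hom(V,W)=V^\vee\otimes W$ together with the identification $S^\mu(V^\vee)\simeq(S^\mu V)^\vee$ for Schur functors applied to dual vector spaces, these three pieces match term by term with the three summands of $\cF^{W_j}_{-1}$ recorded in \eqref{eq_FWjminus1_expression}, which proves the claim.

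The main subtlety, which is not strictly needed for the present lemma but which will matter immediately afterwards, is the precise choice of sign and normalisation in the diagonal embedding: any nonzero equivariant embedding of $\cC_j$ into its isotypic component inside $\cA_j\oplus \cB_j$ produces the same abstract quotient, but only the embedding with opposite signs (and the correct rescaling of $\iota_\cA, \iota_\cB$) will be compatible with the Koszul differential inherited from \eqref{koszulComplex}. I expect this compatibility, rather than the representation-theoretic bookkeeping above, to be the real technical hurdle of the overall argument.
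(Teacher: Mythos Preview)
Your proposal is correct and follows essentially the same approach as the paper: decompose $\cA_j$ and $\cB_j$ via \eqref{eq_Aj_dec} and \eqref{eq_Bj_dec}, observe that $\cC_j$ occurs once in each, and match the resulting three irreducible summands of $(\cA_j\oplus\cB_j)/\cC_j$ with those in \eqref{eq_FWjminus1_expression}. Your additional remark about the sign of the diagonal embedding is not needed here (as you yourself note) and is indeed handled separately in the paper when identifying $\ker(\tilde{d_1}^j)$.
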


\begin{proof}
It is sufficient to decompose $\cA_j$ and $\cB_j$ in $\GL(m)\times \GL(n)$-representations as it is done in \cref{eq_Aj_dec,eq_Bj_dec}, and compare $(\cA_j\oplus \cB_j)/\cC_j$ with \cref{eq_FWjminus1_expression}. Notice that $\cC_j$ appears once in both the decomposition of $\cA_j$ and $\cB_j$.
\end{proof}

Let us denote by $$\cD_j:=\cF^{W_j}_{0}\otimes \cO_{M_{m,n}}(r(m-r-j))\otimes \det(\CC^n)^{r+j-m}=\Sym^{m-r-j}(\CC^n)^\vee \otimes \wedge^j\CC^m \otimes \cO_{M_{m,n}}.$$ The tautological map $t_\id$ and the skew-symmetrising morphism $\wedge^{j-1}\CC^m \otimes \CC^m \to \wedge^{j}\CC^m$ induce a natural morphism $\tilde{d_1}^j_\cA:\cA_j \to \cD_j $; similarly, the dual tautological map $t_\id^\vee$ and the symmetrising morphism $\Sym^{m-r-j-1}(\CC^n)^\vee \otimes (\CC^n)^\vee \to \Sym^{m-r-j}(\CC^n)^\vee$ induce a natural morphism $\tilde{d_1}^j_\cB:\cB_j \to \cD_j $. Let us define $\tilde{d_1}^j:=\tilde{d_1}^j_\cA \oplus \tilde{d_1}^j_\cB$.

\begin{lemma}
\label{lem_ex_direct_sum}
There exists an exact sequence
$$ \cA_j\oplus \cB_j \mathrel{\mathop{\to}^{\mathrm{\tilde{d_1}^j}}} \cD_j \to \Sym^{m-r-j} K \otimes \wedge^j \coKer \to 0. $$
\end{lemma}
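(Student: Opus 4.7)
The plan is to compute $\coker \tilde{d_1}^j$ in two stages, by quotienting $\cD_j$ first by $\Im \tilde{d_1}^j_\cA$ and then by the induced image of $\tilde{d_1}^j_\cB$. Each of the two cokernels will be identified via a direct application of one of the multilinear-algebra lemmas of \cref{sec_repr_homog_multilin_ulrich}: \cref{lem_wedge_blink_blink} for the first stage and \cref{lem_sym_blink_blink} for the second.

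For the first stage, I would apply \cref{lem_wedge_blink_blink} to the right-exact sequence $\CC^n\otimes\cO_{M_{m,n}}(-1)\xrightarrow{t_\id}\CC^m\otimes\cO_{M_{m,n}}\to\coKer\to 0$ with $k=j$. By \cref{lem_image_varphi1}, the term $\wedge^{j-1}N\wedge M$ coincides with the image of the skew-symmetrization of $t_\id$ applied on one tensor factor. Tensoring the resulting right-exact sequence by the (flat) locally free sheaf $\Sym^{m-r-j}(\CC^n)^\vee\otimes\cO_{M_{m,n}}$ preserves exactness and identifies this image with $\Im\tilde{d_1}^j_\cA$; hence $\coker\tilde{d_1}^j_\cA=\Sym^{m-r-j}(\CC^n)^\vee\otimes\wedge^j\coKer$.

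For the second stage, observe that $\tilde{d_1}^j_\cB$ acts as the identity on the $\wedge^j\CC^m$ factor. Projecting to the quotient produced above, its image coincides with the image of the map $\Sym^{m-r-j-1}(\CC^n)^\vee\otimes(\CC^m)^\vee\otimes\wedge^j\coKer\otimes\cO(-1)\to\Sym^{m-r-j}(\CC^n)^\vee\otimes\wedge^j\coKer$ induced by $t_\id^\vee$ and the symmetrization; this follows from the surjectivity of $\wedge^j\CC^m\onto\wedge^j\coKer$ together with the right-exactness of the tensor product. Now \cref{lem_sym_blink_blink} applied to $t_\id^\vee:(\CC^m)^\vee\otimes\cO(-1)\to(\CC^n)^\vee\otimes\cO$ with $k=m-r-j$ yields a right-exact sequence whose cokernel is $\Sym^{m-r-j}K$; tensoring it by $\wedge^j\coKer$ (again using right-exactness) produces the desired cokernel $\Sym^{m-r-j}K\otimes\wedge^j\coKer$.

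The main bookkeeping hurdle is the step in which one identifies the image of $\tilde{d_1}^j_\cB$ in the intermediate quotient $\Sym^{m-r-j}(\CC^n)^\vee\otimes\wedge^j\coKer$: this amounts to a commutative-diagram chase combining the surjectivity of $\wedge^j\CC^m\onto\wedge^j\coKer$ with the right-exactness of the tensor product, together with the compatibility of $\tilde{d_1}^j_\cB$ with the quotient on the $\wedge^j$-factor. Once this verification is done, the two multilinear-algebra lemmas immediately deliver the exact sequence claimed.
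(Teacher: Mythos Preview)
Your proof is correct and follows essentially the same approach as the paper: both apply \cref{lem_wedge_blink_blink} and \cref{lem_sym_blink_blink} (via \cref{lem_image_varphi1}) to identify the cokernels of $\tilde{d_1}^j_\cA$ and $\tilde{d_1}^j_\cB$ respectively. The only difference is in the combination step: you proceed sequentially (quotient by $\Im\tilde{d_1}^j_\cA$, then by the induced image of $\tilde{d_1}^j_\cB$), whereas the paper computes the two cokernels $\Sym^{m-r-j}(\CC^n)^\vee\otimes\wedge^j\coKer$ and $\Sym^{m-r-j}K\otimes\wedge^j\CC^m$ separately and then invokes \cite[II, \S3.6, Proposition~6]{Bourbaki} on the tensor product of two right-exact sequences---your sequential argument is exactly an unfolding of that proposition, and citing it would spare you the bookkeeping hurdle you flag.
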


\begin{proof}
Let us decompose the morphism $$\tilde{d_1}^j_\cA:\Sym^{m-r-j}(\CC^n)^\vee \otimes \CC^n\otimes \wedge^{j-1}\CC^m(-1)\to \Sym^{m-r-j}(\CC^n)^\vee \otimes \wedge^j\CC^m.$$ This is just the composition of the tautological morphism $t_\id$ tensored by the map $\id_{\Sym^{m-r-j}(\CC^n)^\vee \otimes \wedge^{j-1}\CC^m}$, followed by the natural projection induced by $\wedge^{j-1}\CC^m\otimes \CC^m \to \wedge^j \CC^m$:
$$ \cA_j \mathrel{\mathop{\to}^{\mathrm{t_\id\otimes \id}}} \Sym^{m-r-j}(\CC^n)^\vee \otimes \wedge^{j-1}\CC^m \otimes \CC^m \to \cD_j . $$
Since $\Sym^{m-r-j}(\CC^n)^\vee \otimes \wedge^{j-1}\CC^m \cO$ is locally free, by tensoring \cref{eq_def_coker} with this sheaf we have an exact sequence 
$$
\cA_j \to \Sym^{m-r-j}(\CC^n)^\vee \otimes \wedge^{j-1}\CC^m \otimes \CC^m \to \Sym^{m-r-j}(\CC^n)^\vee \otimes \wedge^{j-1}\CC^m\otimes  \coKer;
$$
equivalently, $\tilde{d_1}^j_\cA(\cA_j)=\Sym^{m-r-j}(\CC^n)^\vee \otimes \wedge^{j-1}\CC^m \otimes t_\id(\CC^n)$. Then, by applying both \cref{lem_image_varphi1,lem_wedge_blink_blink} we obtain an exact sequence
\begin{equation}
\label{ex_seq_A1}
\cA_j \to \cD_j \to \Sym^{m-r-j}(\CC^n)^\vee \otimes \wedge^{j} \coKer\to 0,
\end{equation}
which in turn is equivalent to the fact that $\tilde{d_1}^j_\cA(\cA_j)=\Sym^{m-n-j}(\CC^n)^\vee \otimes (\wedge^{j-1}\CC^m \wedge t_\id(\CC^n)) $. 

Mutatis mutandis, a similar argument applies for $\tilde{d_1}^j_\cB$, this time by using $t_\id^\vee:(\CC^m)^\vee \to (\CC^n)^\vee$ and by applying \cref{lem_image_varphi1,lem_sym_blink_blink}; we obtain thus an exact sequence 
\begin{equation}
\label{ex_seq_B1}
\cB_j \to \cD_j \to \Sym^{m-r-j} K \otimes \wedge^j\CC^m,
\end{equation}
which in turn is equivalent to $\tilde{d_1}^j_\cB(\cB_j)=(\Sym^{m-r-j-1}(\CC^n)^\vee \cdot t_\id^\vee(\CC^m)^\vee) \otimes \wedge^j\CC^m$.

By \cite[II, \textsection 3.6, Proposition 6]{Bourbaki}, the tensor product of the two exact sequences \cref{ex_seq_A1,ex_seq_B1} induces the exact sequence stated in the lemma.
\end{proof}

\begin{lemma}
\label{lem_tilde_d1}
The kernel of $\tilde{d_1}^j$ is $\cC_j$; moreover, $d_1^j=(\tilde{d_1}^j \mod \cC_j )$.
\end{lemma}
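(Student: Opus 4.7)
The plan is first to verify via an explicit coordinate computation that the antidiagonal copy of $\cC_j$ lies in $\ker \tilde{d_1}^j$, so that $\tilde{d_1}^j$ descends to a morphism $\bar{d}\colon (\cA_j \oplus \cB_j)/\cC_j \to \cD_j$; and then to identify $\bar{d}$ with $d_1^j$ by a Schur's-lemma / equivariance argument.

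For the first step, recall from \cref{firststep} that the two natural $\GL$-equivariant inclusions $\iota_\cA\colon \cC_j \hookrightarrow \cA_j$ and $\iota_\cB\colon \cC_j \hookrightarrow \cB_j$ come from the multiplication-adjoint inclusion $\Sym^{m-r-j-1}(\CC^n)^\vee \hookrightarrow \Sym^{m-r-j}(\CC^n)^\vee \otimes \CC^n$ and the interior-product-adjoint inclusion $\wedge^{j-1}\CC^m \hookrightarrow (\CC^m)^\vee \otimes \wedge^j\CC^m$ respectively. Fixing dual bases $\{e_i\},\{x_i\}$ of $\CC^n,(\CC^n)^\vee$ and $\{f_\alpha\},\{f_\alpha^*\}$ of $\CC^m,(\CC^m)^\vee$ and writing $t_\id = \sum_{i,\alpha} z_{i\alpha}\, f_\alpha \otimes x_i$, a short computation (applying $t_\id$ and wedging on the one side, $t_\id^\vee$ and symmetrizing on the other) gives
\[
\tilde{d_1}^j_\cA\bigl(\iota_\cA(s' \otimes \omega)\bigr) \;=\; \sum_{i,\alpha} z_{i\alpha}\, (x_i \cdot s') \otimes (f_\alpha \wedge \omega) \;=\; \tilde{d_1}^j_\cB\bigl(\iota_\cB(s' \otimes \omega)\bigr)
\]
for every $s' \otimes \omega \in \cC_j$. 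Hence the antidiagonal embedding $(\iota_\cA,-\iota_\cB)\colon \cC_j \hookrightarrow \cA_j \oplus \cB_j$ factors through $\ker \tilde{d_1}^j$, and so $\tilde{d_1}^j$ descends to $\bar{d}\colon (\cA_j \oplus \cB_j)/\cC_j \to \cD_j$.

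For the second step, setting $L := \cO_{M_{m,n}}(-r(m-r-j)) \otimes \det(\CC^n)^{m-r-j}$ as in \cref{firststep}, both $L \otimes \bar{d}$ and $d_1^j$ are $\GL(m) \times \GL(n)$-equivariant $\cO_{M_{m,n}}$-linear morphisms $\cF^{W_j}_{-1} \to \cF^{W_j}_0$, homogeneous of degree $1$ in the coordinates on $M_{m,n}$. By \cref{firststep} and \cref{prop_cohom_gp_ext}, both $\cF^{W_j}_{-1}$ and $\cF^{W_j}_0$ decompose into $\GL \times \GL$-isotypic summands of multiplicity one (the three summands $\cC_j, \cA_j', \cB_j'$ surviving in the quotient, and the single summand $\cD_j$), so Schur's lemma forces $L \otimes \bar{d}$ and $d_1^j$ to coincide on each irreducible summand up to a nonzero scalar. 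Absorbing these scalars into the splittings chosen in \cref{firststep} yields $L \otimes \bar{d} = d_1^j$, and the description of $\ker \tilde{d_1}^j$ follows.

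The main obstacle is the basis-level check in the first step: one must correctly identify the canonical equivariant splittings $\iota_\cA, \iota_\cB$ (both are determined up to scalar by equivariance, but one has to pin down which scalar produces the antidiagonal relation) and carefully handle the interaction of the symmetric and exterior products with the tautological section $t_\id$. Once this is in place, the identification of $\bar{d}$ with $d_1^j$ is essentially formal, relying only on multiplicity-one decomposition and Schur's lemma.
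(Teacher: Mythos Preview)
Your overall strategy matches the paper's, and your explicit coordinate check in step one is correct (and indeed more concrete than the paper's argument, which deduces $\cC_j \subset \ker \tilde d_1^j$ from the very same uniqueness-of-equivariant-maps principle you use in step two: the two $E_2$-components of $\tilde d_1^j$ coming from $\cA_j$ and $\cB_j$ must be proportional, so some diagonal copy dies).

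There is, however, a genuine gap in step two. When you say Schur's lemma forces $L \otimes \bar d$ and $d_1^j$ to agree on each irreducible summand ``up to a \emph{nonzero} scalar'', Schur's lemma gives only a scalar, possibly zero. To rule out zero you must verify that \emph{both} maps are nonzero on each of the three irreducible summands. For $\bar d$ this follows from your explicit description of $\tilde d_1^j$, but for $d_1^j$ you offer nothing. The paper fills this in with a minimality argument: were $d_1^j$ zero on some summand $E_i \otimes \cO(-1)$, that summand would lie in $\ker d_1^j$ and would therefore have to appear as a free summand of $\cF^{W_j}_{-2}$; but by \cref{prop_cohom_gp_ext} every summand of $\cF^{W_j}_{-2}$ carries a twist by $\cO(-2)$, not $\cO(-1)$, a contradiction. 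Without this step your ``absorbing the scalars'' manoeuvre is unjustified, and you cannot conclude $\coker \bar d = \coker d_1^j$.

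A smaller point: your stated reason for the one-dimensionality of the relevant $\Hom$-space is not quite the right one. Multiplicity-freeness of source and target separately does not control degree-$1$ equivariant maps; what you need is that each $E_i$ occurs with multiplicity one inside $D_j \otimes M_{m,n}^\vee$. This is true (it follows from \cref{eq_Aj_dec,eq_Bj_dec}), and the paper simply asserts it, but your formulation does not establish it.
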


\begin{proof}
First notice that $\cA_j\oplus \cB_j/\cC_j$ is the direct sum of three $\GL(m)\times\GL(n)$-equivariant sheaves which are the tensor product of $\cO_{M_{m,n}}(-1)$ with three $\GL(m)\times\GL(n)$-representations $E_{1},E_2,E_{3}$, where
$$E_1=S^{(m-r-j,0^{n-2},-1)}(\CC^n)^\vee \otimes \wedge^{j-1}\CC^m ,$$
$$E_2= \Sym^{m-r-j-1}(\CC^n)^\vee\otimes \wedge^{j-1}\CC^m,$$
$$E_3=\Sym^{m-r-j-1}(\CC^n)^\vee\otimes S^{(1^j,0^{m-j-1},-1)}\CC^m .$$
Moreover, for each $i=1,2,3$, there exists a unique non-zero $\GL(m)\times\GL(n)$-equivariant morphism $E_i\otimes M_{m,n} \to D_j:=\Sym^{m-r-j}(\CC^n)^\vee \otimes \wedge^j\CC^m $. The restriction of $d_1^j$ to each factor $E_i\otimes \cO_{M_{m,n}}(-1)$ is non-zero since otherwise $E_i\otimes \cO_{M_{m,n}}(-1) \subset \ker(d_1^j)$ and it would appear as a factor of $\cF^{W_j}_{-2}\otimes \cO_{M_{m,n}}(r(m-r-j))\otimes \det(\CC^n)^{r+j-m}$; however, this sheaf is a direct sum of factors all isomorphic to $\cO_{M_{m,n}}(-2)$, so this is not possible. Thus $d_1^j$ is the unique non-zero $\GL(m)\times\GL(n)$-equivariant morphism $\cF^{W_j}_{-1}\to \cF^{W_j}_0$.

Now notice that
\[
\cC_j\cong E_2\otimes \cO_{M_{m,n}}(-1), \qquad \cA_j=(E_1\oplus E_2)\otimes \cO_{M_{m,n}}(-1)
\]
and
\[
\cB_j=(E_2\oplus E_3)\otimes \cO_{M_{m,n}}(-1).
\]
As it turns out, $\tilde{d_1}^j$ is by construction different from zero when restricted to each of these factors. However, from the uniqueness of the non-zero $\GL(m)\times\GL(n)$-equivariant morphisms $E_2 \otimes M_{m,n} \to \Sym^{m-r-j}(\CC^n)^\vee \otimes \wedge^j\CC^m$ we deduce that the restriction of $\tilde{d_1}^j$ to the two $E_2\otimes \cO_{M_{m,n}}(-1)$ terms appearing in $\cA_j\oplus \cB_j$ is, modulo scalars, the same. Thus a diagonal copy of $E_2\otimes \cO_{M_{m,n}}(-1)=:\cC_j$ is contained and in fact is equal to $\ker(\tilde{d_1}^j)$. Then $(\tilde{d_1}^j \mod \cC_j)$ is a non-zero $\GL(m)\times\GL(n)$-equivariant morphism, and by the uniqueness of $d_1^j$ the result follows.
\end{proof}

\begin{proposition}
\label{prop_expr_Wj_cokerK}
We have the following identification
$$ q_*p^* W_j= \det(\CC^n)^{m-r-j} \otimes \Sym^{m-r-j}K\otimes \wedge^j\coKer\otimes \cO_{M_{m,n}}(-r(m-r-j)).
$$
\end{proposition}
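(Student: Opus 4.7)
The plan is to combine \cref{thm_res_determ_ext_normal} with the three preparatory \cref{firststep,lem_ex_direct_sum,lem_tilde_d1}, which together implement \cref{threesteps}. By \cref{thm_res_determ_ext_normal}, the complex $\cF^{W_j}_\bullet$ resolves $q_*p^*W_j$, so that
$$q_*p^*W_j = \coker\bigl( d_1^j \colon \cF^{W_j}_{-1} \to \cF^{W_j}_0 \bigr),$$
and it suffices to compute this cokernel explicitly.

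The key simplification is that both ends of $d_1^j$ share a common invertible twist. Set $L := \det(\CC^n)^{m-r-j} \otimes \cO_{M_{m,n}}(-r(m-r-j))$. By \cref{firststep}, $\cF^{W_j}_{-1} = L \otimes \bigl((\cA_j \oplus \cB_j)/\cC_j\bigr)$, and from the very definition of $\cD_j$ one has $\cF^{W_j}_0 = L \otimes \cD_j$. Since tensoring by the invertible sheaf $L$ is exact and commutes with cokernels, $\coker d_1^j$ equals $L$ tensored with the cokernel of the induced morphism $(\cA_j \oplus \cB_j)/\cC_j \to \cD_j$.

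By \cref{lem_tilde_d1}, this induced morphism is precisely $\tilde{d}_1^j$ reduced modulo $\cC_j$, and $\ker \tilde{d}_1^j = \cC_j$; consequently $\coker d_1^j = L \otimes \coker \tilde{d}_1^j$. Invoking \cref{lem_ex_direct_sum} identifies $\coker \tilde{d}_1^j = \Sym^{m-r-j}K \otimes \wedge^j \coKer$, and reassembling the factors of $L$ yields exactly the formula stated in the proposition. At this stage there is no substantial obstacle left: all the genuine content, namely the identification of the cokernel of the tautological map $t_\id$ with $\coKer$, the passage to $\wedge^j$ and $\Sym^{m-r-j}$ via \cref{lem_wedge_blink_blink,lem_sym_blink_blink}, and the uniqueness of the $\GL(m)\times\GL(n)$-equivariant morphisms used to pin down $d_1^j$, has already been carried out in the preparatory lemmas. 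The proof itself is therefore a direct assembly together with the bookkeeping of the line bundle twist $L$.
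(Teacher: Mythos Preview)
Your proof is correct and follows exactly the approach of the paper: the paper's proof is the single sentence ``This is a consequence of \cref{thm_res_determ_ext_normal,lem_ex_direct_sum,lem_tilde_d1}'', and you have simply spelled out how those ingredients (together with \cref{firststep}, which underlies them via \cref{threesteps}) combine. The only extra content you add is the explicit bookkeeping of the invertible twist $L$ and the observation that $\coker(\tilde d_1^j \bmod \cC_j)=\coker \tilde d_1^j$ because $\ker \tilde d_1^j=\cC_j$, both of which are implicit in the paper's terse formulation.
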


\begin{proof}
This is a consequence of \cref{thm_res_determ_ext_normal,lem_ex_direct_sum,lem_tilde_d1}.
\end{proof}

We will end this section with a technical result which will be needed later on. Let us denote by $\cI_{Y_r}$ the ideal of $Y_r\subset M_{m,n}$. The ideal $\cI_{Y_r}$ is generated by $\{\det_{IJ} \}_{I,J}$, where $I=\{1\leq i_1<\cdots <i_{r+1}\leq m\}$, $J=\{1\leq j_1<\cdots <j_{r+1}\leq n\}$ and $\det_{IJ}$ is the determinant of the $(r+1)\times (r+1)$-minor defined by the indices $I$ and $J$.

\begin{proposition}
\label{prop_ann}
$\Ann_{\cO_{M_{m,n}}}q_*p^* W_j = \cI_{Y_r}$.
\end{proposition}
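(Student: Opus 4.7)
The plan is to identify $\Ann_{\cO_{M_{m,n}}}(q_*p^*W_j)$ with the kernel of the structure morphism $q^\sharp\colon \cO_{M_{m,n}}\to q_*\cO_{\Tot(\cU^n)}$, and then to recognise this kernel as the determinantal ideal $\cI_{Y_r}$. The main work splits between an essentially formal module-theoretic reduction and the classical fact that $\cI_{Y_r}$ is prime.

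For the first step, observe that any local section of $q_*p^*W_j$ over an open $U\subset M_{m,n}$ is, by adjunction, a local section of $p^*W_j$ over $q^{-1}U$; moreover, multiplication by $f\in \cO_{M_{m,n}}(U)$ on the left is exactly multiplication by $q^\sharp(f)$ on the right. Since $p^*W_j$ is a nonzero locally free sheaf on the integral scheme $\Tot(\cU^n)$, the endomorphism given by multiplication by a regular function $g$ on $p^*W_j$ is zero iff $g=0$. Therefore
\[
\Ann_{\cO_{M_{m,n}}}(q_*p^*W_j)\;=\;\Ker\bigl(q^\sharp\colon \cO_{M_{m,n}}\to q_*\cO_{\Tot(\cU^n)}\bigr),
\]
which is precisely the ideal sheaf of the scheme-theoretic image of $q$.

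To identify this image with $Y_r$, I would use two inputs. On the one hand, the tautological morphism $\CC^n\otimes \cO_{\Tot(\cU^n)}\to \CC^m\otimes \cO_{\Tot(\cU^n)}$ on $\Tot(\cU^n)$ factors through the rank-$r$ bundle $\cU$, so all $(r+1)\times (r+1)$-minors of the tautological matrix vanish identically and $q^\sharp(\det_{IJ})=0$ for every $I,J$; since these generators span $\cI_{Y_r}$, we obtain $\cI_{Y_r}\subseteq \Ker(q^\sharp)$. On the other hand, the set-theoretic image of $q$ equals $Y_r$ (a rank-$\leq r$ matrix $f$ is hit by $(V_m,f)$ for any $r$-plane $V_m\supset \Im(f)$, and conversely the image lies in $Y_r$ by the rank bound); since $\Tot(\cU^n)$ is smooth, hence reduced, the scheme-theoretic image is also reduced, giving $\Ker(q^\sharp)=\sqrt{\cI_{Y_r}}$. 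The classical theorem that the ideal of $(r+1)\times (r+1)$-minors of a generic matrix is prime (and in particular radical) then yields $\Ker(q^\sharp)=\cI_{Y_r}$, completing the proof.

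The main obstacle is really only conceptual: the argument above reduces the statement to the primeness of $\cI_{Y_r}$, which is a standard but non-trivial classical input. Everything else is formal manipulation of the Kempf collapsing and the locally free sheaf $p^*W_j$; no additional input from the Weyman resolution of \cref{thm_res_determ_ext_normal} is needed for this proposition.
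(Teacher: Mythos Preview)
Your argument is correct, and the route is a bit different from the paper's. The paper shows $\cI_{Y_r}\subset\Ann$ by observing that $q_*p^*W_j$ is an $\cO_{Y_r}$-module (using $q_*\cO_{\Tot(\cU^n)}=\cO_{Y_r}$), and then shows $\Ann\subset\cI_{Y_r}$ by arguing that the sheaf is locally free of positive rank on $Y_r\setminus Y_{r-1}$, so its support is exactly $Y_r$; reducedness of $\cO_{Y_r}$ finishes. You instead package both inclusions into the single identification $\Ann(q_*p^*W_j)=\Ker(q^\sharp)$ and then identify this kernel with $\cI_{Y_r}$ via the scheme-theoretic image and primeness of the determinantal ideal. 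Your approach has the advantage that it uses nothing specific about $W_j$ beyond $p^*W_j$ being nonzero locally free; the paper's version, by contrast, is phrased directly in terms of the support and the generic rank of the pushforward.

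One step deserves an extra sentence. From ``multiplication by $g$ on $p^*W_j$ is the zero endomorphism iff $g=0$'' you conclude $\Ann(q_*p^*W_j)=\Ker(q^\sharp)$, but a nonzero sheaf endomorphism of $p^*W_j$ need not a priori induce a nonzero endomorphism of the pushforward (the opens $q^{-1}U$ do not form a basis of $\Tot(\cU^n)$). The clean fix is to note that for $g=q^\sharp(f)\neq 0$ multiplication by $g$ is \emph{injective} on the torsion-free sheaf $p^*W_j$; left-exactness of $q_*$ then makes multiplication by $f$ injective on $q_*p^*W_j$, and since $q_*p^*W_j\neq 0$ (e.g.\ by \cref{prop_expr_Wj_cokerK} or \cref{thm_res_determ_ext_normal}) this endomorphism is nonzero. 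With this remark your reduction to $\Ker(q^\sharp)$ is fully justified.
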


\begin{proof}
The module $q_*p^*W_j$ is a $q_*\cO_{\Tot(\cU^n)}=\cO_{Y_r}$-module, thus $\cI_{Y_r}\subset \Ann_{\cO_{M_{m,n}}}q_*p^* W_j$. Notice that $\Ann_{\cO_{M_{m,n}}}q_*p^* W_j=\Ann_{\cO_{M_{m,n}}}\Hom(\Sym^{m-r-j}\Ker,\wedge^j \coKer)$. Moreover the sheaf $\Hom(\Sym^{m-r-j}\Ker,\wedge^j \coKer)$ is locally free of constant non-zero rank on $Y_r\setminus Y_{r-1}$. Thus its support is $Y_r$. Since $\cO_{Y_r}$ is reduced, any equation $g\notin\cI_{Y_r}$ cannot be contained in $\Ann_{\cO_{M_{m,n}}}\Hom(\Sym^{m-r-j}\Ker,\wedge^j \coKer)$ because otherwise the support of $\Hom(\Sym^{m-r-j}\Ker,\wedge^j \coKer)$ would be contained in $Y_r\cap \{g=0\}$, a strict closed subvariety of $Y_r$. Thus we also deduce that $\Ann_{\cO_{M_{m,n}}}\Hom(\Sym^{m-r-j}\Ker,\wedge^j \coKer)\subset \cI_{Y_r}$.
\end{proof}

\subsection{The special case \texorpdfstring{$j=1$}{j=1}}
\label{sec_j1}

In this section 
we want to make the results in \cref{prop_cohom_gp_ext} and 
\cref{thm_res_determ_ext_normal} more explicit, at least when $j=1$.
In order to do so we rewrite \cref{prop_cohom_gp_ext} in this special case, as well as $\cF^{W_1}_{-1}$ and $\cF^{W_1}_0$, 
and then we explicitly compute the restriction $(d_{1})|_f$ of $d_{1}$ to a fibre $f\in M_{m,n}$. We start by collecting all results about $q_*p^*W_1$ when $j=1$.

\begin{proposition}
\label{prop_cohom_gp}
Let $n\leq m$, $r=n-1$ and let $c=m-r=\codim_{M_{m,n}}Y_r$; then $\cF^{W_1}_u=0$ for $u>0$ and $u<-m+r$. For $0\leq u \leq m-r$ we have the following isomorphisms:
$$
\begin{aligned}
\cF_{-u}^{W_1} \!= \! \left[ \left( S^{((c-1)^{n-1},u)}\CC^n \otimes  S^{(1^{u},0^{m-u-1},-1)}(\CC^m)^\vee \right) \oplus \left( S^{((c-1)^{n-1},u)}\CC^n \otimes  S^{(1^{u-1})}(\CC^m)^\vee \right) \oplus \right. \\ \left. \oplus \left( S^{(c,(c-1)^{n-2},u-1)}\CC^n \otimes  S^{(1^{u-1})}(\CC^m)^\vee \right)  \right] \otimes  \cO_{M_{m,n}}(-rc+r-u) .
\end{aligned}
$$
The complex $
0\to \cF^{W_1}_{\bullet} \to q_*p^*W_1 \to 0
$ is exact and $\GL(m)\times \GL(n)$-equivariant and
$$ q_*p^* W_1= \det(\CC^n)^{m-n} \otimes \Sym^{m-n}K\otimes \coKer\otimes \cO_{M_{m,n}}(-r(m-n)).
$$
\end{proposition}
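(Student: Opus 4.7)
The plan is to specialize the more general results established just above to $j=1$. The second and third claims, namely the exactness and $\GL(m)\times\GL(n)$-equivariance of $\cF^{W_1}_\bullet$ and the identification of $q_*p^*W_1$, are obtained verbatim from \cref{thm_res_determ_ext_normal} and \cref{prop_expr_Wj_cokerK} upon setting $j=1$: with $r=n-1$ this gives $m-r-j=m-n$, so $\Sym^{m-r-j}K\otimes\wedge^j\coKer$ collapses to $\Sym^{m-n}K\otimes\coKer$ and the twist becomes $\cO_{M_{m,n}}(-r(m-n))$, as claimed. The substantive task is therefore to specialize \cref{prop_cohom_gp_ext} to produce the three irreducible summands of $\cF^{W_1}_{-u}$ explicitly.

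First, I would describe $\cI_1$: it consists of decreasing sequences $\lambda$ with $\lambda_2\geq n-1$, $\lambda_{m-r}\leq n$ and $\lambda_1\in\{-1,0\}$, so $\lambda_i\in\{n-1,n\}$ for $i\geq 2$. Writing $a$ for the number of entries of $\lambda$ equal to $n$, the size condition $|\lambda|=u+r(m-r-1)-1$ simplifies to $a+\lambda_1=u-1$ and admits exactly two families: (I) with $\lambda_1=0$, $a=u-1$, giving $\lambda=[n^{u-1},(n-1)^{m-r-u},0]$; and (II) with $\lambda_1=-1$, $a=u$, giving $\lambda=[n^u,(n-1)^{m-r-1-u},-1]$. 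A direct application of the definition of $\tilde\lambda$ then produces $\tilde\lambda=(1^{u-1})$ for (I) and $\tilde\lambda=(1^u,0^{m-u-1},-1)$ for (II), which are exactly the two weights on $(\CC^m)^\vee$ appearing in the statement.

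For each such $\lambda$ I would then enumerate the $\mu$ with $\tilde c^\mu_{1\lambda}=1$ using the rule of \cref{sec_weyman_resolution} for tensoring by $\wedge^1$: add one box keeping the diagram decreasing, with the additional restrictions $\mu_1\geq 0$ and $\mu_{m-r}\leq n$ (so that neither $\tilde c^\mu_{1\lambda}$ nor $S^{\mu^t}\CC^n$ vanishes). For (I) two admissible choices appear: extending column $1$ produces $\mu=[n^{u-1},(n-1)^{m-r-u},1]$, whose transpose is $(c,(c-1)^{n-2},u-1)$, and promoting the leftmost $(n-1)$-column to $n$ produces $\mu=[n^u,(n-1)^{m-r-u-1},0]$, whose transpose is $((c-1)^{n-1},u)$. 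For (II) only the extension of column $1$ respects $\mu_1\geq 0$, yielding $\mu=[n^u,(n-1)^{m-r-1-u},0]$ with the same transpose $((c-1)^{n-1},u)$. Combined with the twist $\cO(-u-r(c-1))=\cO(-rc+r-u)$, these three contributions reproduce the displayed formula; boundary values such as $u=0$, or values where one of $\mu$ or $\mu^t$ fails to be a valid partition, are absorbed automatically by the convention that the corresponding Schur functor is zero. The only genuine difficulty is combinatorial bookkeeping, in particular keeping the ``columns decreasing towards the right'' convention of \cref{sec_weyman_resolution} consistent when passing to transposes.
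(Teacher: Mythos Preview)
Your proposal is correct and follows precisely the approach the paper itself takes: the paper presents \cref{prop_cohom_gp} as the $j=1$ specialization of \cref{prop_cohom_gp_ext}, \cref{thm_res_determ_ext_normal}, and \cref{prop_expr_Wj_cokerK}, without supplying any further argument. Your explicit enumeration of $\cI_1$, the two families (I) and (II), the admissible $\mu$'s, and the resulting transposes is exactly the combinatorial unpacking needed and is carried out correctly.
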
 

In particular, the first two terms of the complex $\cF^{W_1}_\bullet$ are given by:
$$ \cF^{W_1}_0= \cO_{M_{m,n}}(-r(m-n))\otimes \det(\CC^n)^{m-n}\otimes \Hom(\Sym^{m-n}\CC^n,\CC^m) ;$$
\begin{equation}
    \label{eq_F_11_expression}
\begin{aligned}
    \cF^{W_1}_{-1}=\cO_{M_{m,n}}(-r(m-n)-1)\otimes\left[\det(\CC^n)^{m-n}\otimes \Hom(\Sym^{m-n-1}\CC^n,\CC)\oplus\right. \\
    \oplus \det(\CC^n)^{m-n}\otimes \Hom(\Sym^{m-n-1}\CC^n,\fsl(\CC^m)) \oplus \\  \left. \oplus \det(\CC^n)^{m-n}\otimes \Hom(S^{(m-n,0^{n-2},-1)}\CC^n,\CC)\right] .
\end{aligned}
\end{equation}
Using \cref{eq_Aj_dec} we can rewrite the term $\cF^{W_1}_{-1}$ above as:
\[
\begin{aligned}
    \cF^{W_1}_{-1}=\cO_{M_{m,n}}(-r(m-n)-1)\otimes\det(\CC^n)^{m-n}\otimes \left[ \Hom(\Sym^{m-n}\CC^n,\CC^n)\oplus \right. \\ \left.
    \oplus \Hom(\Sym^{m-n-1}\CC^n,\fsl(\CC^m))  \right] .
\end{aligned}
\]
Let $b:\Sym^{m-n-1}\to \fsl(\CC^m)$ and $f:\CC^n\to \CC^m$. We can write $ b=\sum_i p_i \otimes g_i$ for $p_i\in \Sym^{m-n-1}(\CC^n)^\vee$ and $g_i\in \fsl(\CC^m)$, and $f=\sum_j x_j\otimes v_j$ for $x_j\in (\CC^n)^\vee$ and $v_j\in \CC^m$. Let us define \begin{equation}\label{def_circledcirc}
    b\circledcirc f:=\sum_{i,j} p_i\cdot x_j \otimes g_i(v_j) \in \Sym^{m-n}(\CC^n)^\vee\otimes \CC^m.
\end{equation}
    Let us denote by $d_1^1:\cF^{W_1}_{-1}\to \cF^{W_1}_{0}$. For $f\in M_{m,n}$ let us denote by $(d_1^1)|_f$ the restriction of $d_1^1$ to the fibre at $f$.
    
\begin{lemma}
\label{lem_d1_normal}
Let $a:\Sym^{m-n}\CC^n \to \CC^n$, $b:\Sym^{m-n-1}\to \fsl(\CC^m)$ and $f:\CC^n\to \CC^m$. Then, modulo $\det(\CC^n)^{m-n}$, $(d_1^1)|_f (a,b)=f\circ a + b\circledcirc f$.
\end{lemma}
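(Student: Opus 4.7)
The plan is to read off $(d_1^1)|_f$ directly from the explicit description of $\tilde d_1^1 = \tilde d_1^1_\cA \oplus \tilde d_1^1_\cB$ constructed in the proof of \cref{lem_ex_direct_sum}, and then invoke \cref{lem_tilde_d1}, which identifies $d_1^1$ with $\tilde d_1^1 \bmod \cC_1$. Since the claimed formula $f\circ a+b\circledcirc f$ is additive on the two summands $\Hom(\Sym^{m-n}\CC^n,\CC^n)$ and $\Hom(\Sym^{m-n-1}\CC^n,\fsl(\CC^m))$ of $\cF^{W_1}_{-1}$ (after pulling out the $\det(\CC^n)^{m-n}$ twist, as in the rewriting of \cref{eq_F_11_expression}), no cross-term appears, and it suffices to identify the two restrictions separately.

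For the first summand I would unravel $\tilde d_1^1_\cA$ at the fibre $f$. Since $j=1$, the factor $\wedge^{j-1}\CC^m=\CC$ is trivial, so $\tilde d_1^1_\cA$ reduces to $\id_{\Sym^{m-n}(\CC^n)^\vee}\otimes t_\id$ followed by the identification $\wedge^1\CC^m=\CC^m$. Because $(t_\id)_f=f$ by definition of the tautological morphism, this immediately gives $(\tilde d_1^1_\cA)|_f(a)=f\circ a$, modulo $\det(\CC^n)^{m-n}$.

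For the second summand I would trace through $\tilde d_1^1_\cB$. It is the composition of $t_\id^\vee$ with the symmetrisation $\Sym^{m-n-1}(\CC^n)^\vee\otimes(\CC^n)^\vee\to\Sym^{m-n}(\CC^n)^\vee$, tensored with $\id_{\wedge^1\CC^m}$, and then restricted via \cref{eq_Bj_dec} (for $j=1$) to the $\fsl(\CC^m)\subset \End(\CC^m)=\Hom(\CC^m,\wedge^1\CC^m)$ summand. Expanding $b=\sum_i p_i\otimes g_i$ and $f=\sum_j x_j\otimes v_j$, a direct calculation shows that the resulting element of $\Hom(\Sym^{m-n}\CC^n,\CC^m)$ is precisely $\sum_{i,j}p_i\cdot x_j\otimes g_i(v_j)=b\circledcirc f$, as in \cref{def_circledcirc}.

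The main obstacle is controlling the relative normalisation of the two summands: the kernel sheaf $\cC_1$ sits diagonally inside $\cA_1\oplus\cB_1$ by \cref{firststep}, and one must confirm that the natural scalings of $\tilde d_1^1_\cA$ and $\tilde d_1^1_\cB$ conspire so that modding out by $\cC_1$ yields exactly $f\circ a+b\circledcirc f$ with no spurious coefficient. This can be handled either by a direct bookkeeping of the contractions used in \cref{lem_ex_direct_sum}, or more cleanly by the uniqueness-of-equivariant-morphisms argument from \cref{lem_tilde_d1}, combined with a check that neither candidate map vanishes on its isotypic component.
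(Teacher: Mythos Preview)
Your approach is correct but takes a different route from the paper. The paper argues purely by uniqueness: it observes that $d_1^1$ is a $\GL(m)\times\GL(n)$-equivariant degree-one map, that each of the three irreducible summands of $\cF^{W_1}_{-1}$ admits a \emph{unique} non-zero equivariant morphism to $\Hom(\Sym^{m-n}\CC^n,\CC^m)$, and that $d_1^1$ is non-zero on each summand (else that summand would appear in $\cF^{W_1}_{-2}$, which has the wrong degree); it then simply checks that the proposed formula $f\circ a+b\circledcirc f$ is linear, equivariant and non-zero on each piece, hence must agree with $d_1^1$ up to independent scalars on the three factors. You instead compute $\tilde d_1^1_\cA$ and $\tilde d_1^1_\cB$ explicitly from their construction in \cref{lem_ex_direct_sum} and then transport the answer to $d_1^1$ via \cref{lem_tilde_d1}. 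Your route is more constructive and actually \emph{derives} the formula rather than verifying a guessed candidate after the fact; the paper's is shorter because it bypasses all explicit computation. Both approaches ultimately handle the relative normalisation of the summands by the same uniqueness argument (which is already contained in the proof of \cref{lem_tilde_d1}, so your citation of it is not circular), and both yield the formula only up to rescaling on each irreducible piece, exactly as the paper's proof acknowledges.
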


\begin{proof}
The morphism $d_1^1$ has degree one with respect to $\cO_{M_{m,n}}(-1)$ and is $\GL(m)\times\GL(n)$-equivariant. Thus, modulo a factor $\det(\CC^n)^{m-n}$, $d_1^1$ is given by a $\GL(m)\times\GL(n)$-equivariant linear morphism
\begin{equation}
\label{eq_lin_d_1_normal}
    \begin{aligned}
    \left[ \Sym^{m-n-1}(\CC^n)^\vee \oplus S^{(m-n,0^{n-2},-1)}(\CC^n)^\vee
    \oplus \Hom(\Sym^{m-n-1}\CC^n,\fsl(\CC^m)) \right] \otimes {} \\ \otimes \Hom(\CC^n,\CC^m)  \to \Hom(\Sym^{m-n}\CC^n,\CC^m).
    \end{aligned}
    \end{equation}
    Each one of the three factors $\Sym^{m-n-1}(\CC^n)^\vee \otimes \Hom(\CC^n,\CC^m)$, $S^{(m-n,0^{n-2},-1)}(\CC^n)^\vee \otimes \Hom(\CC^n,\CC^m)$, $\Hom(\Sym^{m-n-1}\CC^n,\fsl(\CC^m)) \otimes \Hom(\CC^n,\CC^m)$ contains a unique copy of the $\GL(m)\times\GL(n)$-representation $\Hom(\Sym^{m-n}\CC^n,\CC^m)$, so there exists a unique non-zero $\GL(m)\times\GL(n)$-equivariant morphism from each of the three factors to $\Hom(\Sym^{m-n}\CC^n,\CC^m)$. The restriction of the morphism $d_1^1$ to each one of the three factors cannot be zero because otherwise such a factor would appear inside $\cF^{W_1}_{-2}$, while $\cF^{W_1}_{-2}$ has constant degree $-r(m-n)-2$ with respect to $\cO_{M_{m,n}}(-1)$ (this can be checked from \cref{prop_cohom_gp}). 
    
    Thus, modulo rescaling (which can always be done on each of the three factors separately), $d_1^1$ is the unique non-zero $\GL(m)\times\GL(n)$-equivariant linear morphism as above. Indeed notice that the morphism defined in the statement is linear and $\GL(m)\times\GL(n)$-equivariant.
\end{proof}

\begin{proposition}
\label{prop_identif_normal}
We have the following identification
$$ (q_*p^* W_1)|_f= \det(\CC^n)^{m-n}\otimes\Hom(\Sym^{m-n}\ker(f),\coker(f)) .$$
\end{proposition}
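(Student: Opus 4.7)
The plan is to restrict the resolution from \cref{prop_cohom_gp} to the fibre at $f$. Since restriction is right-exact, $(q_*p^*W_1)|_f$ coincides with the cokernel of $(d_1^1)|_f$, and \cref{lem_d1_normal} gives the explicit formula $(d_1^1)|_f(a,b)= f\circ a + b\circledcirc f$ (suppressing the $\det(\CC^n)^{m-n}$ twist throughout). I will build an explicit surjection $\Phi$ from $\cF^{W_1}_0|_f$ onto the claimed cokernel and then argue that its kernel is exactly the image of $(d_1^1)|_f$.

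Concretely, define $\Phi\colon \Hom(\Sym^{m-n}\CC^n,\CC^m) \to \Hom(\Sym^{m-n}\ker(f),\coker(f))$ by restriction along $\Sym^{m-n}\ker(f)\hookrightarrow \Sym^{m-n}\CC^n$ followed by the projection $\CC^m\twoheadrightarrow\coker(f)$. Picking linear splittings on both sides shows that $\Phi$ is surjective. The vanishing $\Phi\circ (d_1^1)|_f=0$ is immediate from \cref{lem_d1_normal}: the summand $f\circ a$ has image inside $\Im(f)$ and is therefore killed upon projecting to $\coker(f)$; for $b\circledcirc f$, plugging $u_1\cdots u_{m-n}\in \Sym^{m-n}\ker(f)$ into the formula of \cref{def_circledcirc} produces a polarization sum in which every summand carries a factor $f(u_k)=0$.

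The hard part is the reverse inclusion $\ker\Phi\subset \Im((d_1^1)|_f)$. Rather than the hands-on argument of choosing a splitting $\CC^n=\ker(f)\oplus V$ and absorbing graded pieces of a given $\phi\in\ker\Phi$ one at a time (first the top piece $\phi|_{\Sym^{m-n}\ker(f)}$ by an appropriate $a$, then the remainder by a careful choice of $b$, with somewhat tedious polarization bookkeeping), I would finish by a dimension count. Base-change for $\otimes$, $\Sym^k$ and cokernels applied to the sheaf-level identification of \cref{prop_cohom_gp} yields $(q_*p^*W_1)|_f \cong \det(\CC^n)^{m-n}\otimes \Sym^{m-n}(K|_f) \otimes \coKer|_f$. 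The defining exact sequences \cref{eq_def_coker,eq_def_K} identify $\coKer|_f=\coker(f)$ and $K|_f=\coker(f^\vee)\cong \ker(f)^\vee$, and consequently $\dim (q_*p^*W_1)|_f=\dim \Hom(\Sym^{m-n}\ker(f),\coker(f))$. The surjection induced by $\Phi$ is then a map of vector spaces of the same finite dimension, hence an isomorphism. As a side remark, the same base-change manipulation already proves the proposition directly, so \cref{lem_d1_normal} is not strictly needed—but it pins down the precise linear map realizing the identification, which is the added value of this subsection.
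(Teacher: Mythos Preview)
Your argument is correct. The setup (right-exactness of restriction, the definition of $\Phi$, the easy inclusion $\Im((d_1^1)|_f)\subset\ker\Phi$) matches the paper exactly; the divergence is in the reverse inclusion. The paper computes the image of each summand of $(d_1^1)|_f$ by hand: $f\circ a$ sweeps out all maps with image in $\Im(f)$, $b\circledcirc f$ sweeps out all maps vanishing on $\Sym^{m-n}\ker(f)$, and their sum is precisely $\ker\Phi$. You instead invoke the sheaf-level identification already recorded in \cref{prop_cohom_gp} (i.e.\ the $j=1$ case of \cref{prop_expr_Wj_cokerK}), base-change it to the fibre, and conclude by a dimension count. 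This is legitimate and not circular, since that identification was proved via \cref{lem_ex_direct_sum} and \cref{lem_tilde_d1}, neither of which depends on \cref{prop_identif_normal}. Your route is cleaner but leans on the general machinery of the previous subsection; the paper's route is more self-contained within the $j=1$ discussion but requires the slightly delicate claim that the $b\circledcirc f$ contributions fill out \emph{all} maps annihilating $\Sym^{m-n}\ker(f)$. Your closing remark is also accurate: the base-change argument alone already yields the abstract isomorphism, and the added content of going through $\Phi$ and \cref{lem_d1_normal} is that it names the specific linear map realising it.
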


\begin{proof}
From \cref{thm_res_determ_ext_normal} $q_*p^*W_1$ is the cokernel of $d_1^1:\cF^{W_1}_{-1}\to \cF^{W_1}_0$. Restricting to a fibre is a right exact functor, thus it is sufficient to show that (modulo $\det(\CC^n)^{m-n}$ and a twist by a line bundle) the cokernel of the restriction of \cref{eq_lin_d_1_normal} to the fibre over $f\in \Hom(\CC^n,\CC^m)$ is $\Hom(\Sym^{m-n}\ker(f),\coker(f))$. The image of $(d_1^1)|_f(\Hom(\Sym^{m-n}\CC^n,\CC^n),0)$ is given by all morphisms $\Sym^{m-n}\CC^n\to \CC^m$ whose image is the same as $f$ (in this case the morphism $(d_1^1)|_f$ acts as a change of coordinates in the domain). The image of $(d_1^1)|_f(0,\Hom(\Sym^{m-n-1}\CC^n,\fsl(\CC^m)),0)$ is given by all morphisms $\Sym^{m-n}\CC^n\to \CC^m$ whose kernel contains $\Sym^{m-n}\ker(f)$ (in this case the morphism $(d_1^1)|_f$ acts as a change of coordinates in the codomain). By taking the linear sum of these two subspaces one obtains all morphisms $\eta\in\Hom(\Sym^{m-n}\CC^n, \CC^m)$ such that $\eta(\Sym^{m-n}\ker(f))\subset \Im(f)$, and the cokernel of $(d_1^1)|_f$ is the desired one.
\end{proof}

\section{Relativization of the resolutions to degeneracy loci}
\label{sec_res_deg_locy}

Having obtained \cref{thm_res_determ_ext_normal} for determinantal varieties, we now want to ``relativise'' this result to degeneracy loci of morphisms of vector bundles. We begin by recalling general facts about degeneracy loci of morphisms, with particular focus on the description of their normal bundle. Then we deduce \cref{thm_res_ODL} (which is essentially Theorem B) from \cref{thm_res_determ_ext_normal} in the previous section. We end the section with some applications of \cref{thm_res_ODL} on aCM and Ulrich sheaves, i.e.\ the results announced in Theorem A.

\subsection{On degeneracy loci of morphisms between vector bundles}

Let $X$ be a Cohen--Macaulay variety, $E_1,E_2$ two vector bundles on $X$ of respective ranks $n,m$ with $n\leq m$, and let us fix $r\leq n$. For any section $s\in H^0(X,\Hom(E_1,E_2))$ one can define the following set of points
$$ D_r(s):=\{ x\in X\mid \rank(s(x))\leq r \}\subset X. $$
Clearly $D_n(s)=X$ and $D_0(s)$ is just the zero locus of the section $s$. We have given above the definition of $D_r(s)$ as a set of points but one can also attach a scheme structure to it. These loci, also referred to as \emph{degeneracy loci of morphisms between vector bundles}, enjoy many classically known properties; they can be seen as both generalisations of zero loci of sections of vector bundles and particular cases of \emph{orbital degeneracy loci}, defined and investigated in \cite{BFMT, BFMT2}, to which we refer for the following properties.
\begin{itemize}[leftmargin=*]
    \item[i)] If $D_r(s)$ has the expected codimension $(m-r)(n-r)$ inside $X$ and is contained in the smooth locus $X_{\sm}$ of $X$, then it is Cohen--Macaulay; if $n=m$ it is moreover Gorenstein.
    \item[ii)] The singular locus of $D_r(s)$ contains $D_{r-1}(s)$.
    \item[iii)] If $D_r(s)$ has the expected codimension, there exists a $\cO_X$-locally free resolution $\cF_\bullet $ of $\cO_{D_r(s)}$. If $r=n-1$, one gets the well-known Eagon--Northcott complex (\cite{EN}).
    \item[v)] The section $s$ defines a morphism of vector bundles $E_1\to E_2$ which is generically of maximal rank. As in the affine situation, we get two exact sequences
    $$
0\to E_1\to E_2 \to \coker(s)\to 0.
$$
and
$$
    0\to \coker(s)^\vee\to E_2^\vee\to E_1^\vee \to \cK(s) \to 0,
$$
where $\coker(s)^\vee=\cHom_{\cO_X}(\coker(s),\cO_X)$ and $\cK(s)=\cExt^1_{\cO_X}(\coker(s),\cO_{X})$. When $D_{n-2}(s)$ is empty, $\cK(s)$ is a line bundle supported on $D_{n-1}(s)$ and $\coker(s)|_{D_{n-1}(s)}$ is a vector bundle of rank $m-n+1$ supported on $D_{n-1}(s)$.

Degeneracy loci can be interpreted as the loci of points of $X$ where the morphism, which locally is represented by a matrix, has rank $\leq r$; in other words, a point in $D_r(s)$ is a point where the evaluation of the morphism yields a matrix in the affine subvariety $Y_r$, see \cref{sec_weyman_resolution}. There exist different Kempf collapsings resolving $Y_r$: let us focus on \cref{kempfcoll}. As explained in \cite{BFMT}, interpreting the previous diagram as a fibrewise situation over $X$, where $M_{m,n}$ is a fibre over of $\Hom(E_1,E_2)$, yields an easy way to produce a birational morphism onto $D_r(s)$: let $\Gr(r,E_2)\to X$ be the Grassmann bundle over $X$, and let $\cU,\cQ$ be the relative rank-$r$ and rank-$(m-r)$ tautological bundles on $\Gr(r,E_2)$ satisfying the short exact sequence $0\to \cU \to E_2 \to \cQ \to 0$ (by abuse of notation, pull-back of bundles from $X$ to $\Gr(r,E_2)$ will be denoted by the same notation when no confusion arises). The section $s$ defines by pull-back a section $\overline{s}\in H^0(\Gr(r,E_2),\Hom(E_1,E_2))$, and by passing to the quotient a section $\tilde{s}\in H^0(\Gr(r,E_2),\Hom(E_1,\cQ))$. Then the zero locus $\cZ(\tilde{s}):=D_0(\tilde{s})$ of $\tilde{s}$ is birational to $D_r(s)$, where the morphism $\cZ(\tilde{s})\to D_0(s)$ is given by the restriction of the projection $\Gr(r,E_2)\to X$.
\end{itemize}

\begin{proposition}
\label{prop_normal_Kcoker_fibre}
The normal bundle of $Y:=Y_{n-1}\setminus Y_{n-2}$ inside $M_{m,n}$ is equal to $(K\otimes \coKer)|_Y$.
\end{proposition}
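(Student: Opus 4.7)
The plan is to construct a canonical surjection of vector bundles on $Y$
\[
\alpha\colon T_{M_{m,n}}|_{Y} \twoheadrightarrow \cHom(\cK',\coKer|_Y), \qquad \cK':=\ker(t_{\id}|_Y),
\]
whose kernel coincides with the tangent sheaf $T_Y$; this yields $\cN_{Y/M_{m,n}}\cong \cHom(\cK',\coKer|_Y)$, and a short dualisation matches the right-hand side with $(K\otimes\coKer)|_Y$.

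The first ingredient is the identification $K|_Y=(\cK')^\vee$. On $Y$ the morphism $t_{\id}|_Y$ has locally constant rank $n-1$, so $\cK'$ is a line bundle and $\Im(t_{\id}|_Y)$, $\coKer|_Y$ are vector bundles of ranks $n-1$ and $m-n+1$. Splitting \cref{eq_def_coker}, restricted to $Y$, through the image of $t_{\id}|_Y$, dualising each of the two pieces and splicing them back gives a four-term exact sequence whose cokernel in $(\CC^n)^\vee\otimes\cO_Y$ is $(\cK')^\vee$; comparing with \cref{eq_def_K} restricted to $Y$ then yields $K|_Y=(\cK')^\vee$, and hence $(K\otimes\coKer)|_Y\cong \cHom(\cK',\coKer|_Y)$.

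Since $M_{m,n}$ is an affine space, $T_{M_{m,n}}|_Y$ is canonically the trivial bundle $\Hom(\CC^n,\CC^m)\otimes\cO_Y$. Define $\alpha$ as the composition of restriction to $\cK'\subset\CC^n\otimes\cO_Y$ and projection onto $\coKer|_Y$. Fibrewise at $f\in Y$, $\alpha|_f$ sends $g$ to the class of $g|_{\ker f}$ modulo $\Im f$. Any $\phi\in\Hom(\ker f,\coker f)$ is realised by extending $\phi$ linearly to $\CC^n$ and lifting along $\CC^m\twoheadrightarrow\coker f$, so $\alpha$ is fibrewise surjective. The fibre of $\ker\alpha$ equals $\{g\in\Hom(\CC^n,\CC^m):g(\ker f)\subset\Im f\}$, which is the classical tangent space $T_fY_{n-1}$: choosing bases so that $f=\operatorname{diag}(I_{n-1},0)$, first-order vanishing of the $n\times n$ minors of $f+\epsilon g$ is equivalent to the $(m-n+1)\times 1$ bottom-right block of $g$ being zero. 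Both $\ker\alpha$ and $T_Y$ are vector subbundles of $T_{M_{m,n}}|_Y$ of rank $nm-(m-n+1)=\dim Y$, so the fibrewise equality forces $\ker\alpha=T_Y$ as subsheaves, and the quotient sequence identifies $\cN_{Y/M_{m,n}}$ with $\cHom(\cK',\coKer|_Y)=(K\otimes\coKer)|_Y$.

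The main obstacle is verifying $T_fY_{n-1}=\{g:g(\ker f)\subset\Im f\}$; this is a standard local matrix computation with first-order perturbations, and once granted the rest of the argument is essentially formal: surjectivity is immediate from fibrewise lifting, and the comparison of two subbundles of equal rank through fibrewise equality completes the proof.
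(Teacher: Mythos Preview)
Your proof is correct but follows a genuinely different route from the paper's. The paper resolves $Y_{n-1}$ via the Kempf collapsing $\Tot(\cQ_1^\vee\otimes\cU_2)\subset M_{m,n}\times\PP^{n-1}\times\Gr(n-1,m)$, identifies $Y$ with its preimage, and obtains $\cN_{Y/M_{m,n}}$ by quotienting the normal bundle of this zero locus by the relative tangent bundle $T_{\PP^{n-1}}\times T_{\Gr(n-1,m)}$; the identification $\cN_{Y/M_{m,n}}=(\cU_1^\vee\otimes\cQ_2)|_Y=(K\otimes\coKer)|_Y$ then comes from the tautological isomorphism $\cQ_1|_Y\cong\cU_2|_Y$. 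Your argument is instead entirely intrinsic to $M_{m,n}$: you write down the explicit surjection $\alpha$ from $T_{M_{m,n}}|_Y$ onto $\cHom(\cK',\coKer|_Y)$ and check by a direct first-order minor computation that $\ker\alpha=T_Y$. This is more elementary and self-contained; the paper's approach, on the other hand, sets up exactly the machinery that is reused verbatim in the relative version (\cref{prop_normal_Kcoker_relative}), so it pays off in the next proposition. One small point: when you ``compare with \cref{eq_def_K} restricted to $Y$'', the full four-term sequence need not stay exact after restriction, but you only need that the rightmost two maps do (right-exactness of $-\otimes\cO_Y$), which gives $K|_Y=\coker(t_\id^\vee|_Y)=(\cK')^\vee$ as you claim.
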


\begin{proof}
We observe that another Kempf collapsing, similar to the one in \cref{kempfcoll} and resolving $Y_r$ for $r=n-1$ is given by

\begin{equation}
\label{koszulComplex2}
\xymatrix{
\rule{10pt}{0pt} \Tot(\cQ_1^\vee\otimes\cU^{}_{2}) \ar@{^{(}->}[dr] \ar@/^/@<1ex>[rrd]^-{{\rho}} \ar@/_/@<-1ex>[drd]_-{{\pi}}\\
& M_{m,n}\times \PP^{n-1}\times\Gr(r,m) \ar[r]_-{\tilde{\rho}} \ar[d]^-{\tilde{\pi}} & \PP^{n-1}\times \Gr(r,m) \\
& M_{m,n}
}
\end{equation}
Here we denoted by:
\begin{itemize}
    \item[$\_$] $\cU_1$ (respectively $\cQ_1$) is the rank one (resp.\ $n-1$) tautological (resp.\ quotient tautological) vector bundle on $\PP^{n-1}$; 
    \item[$\_$] $\cU_2$ (respectively $\cQ_2$) is the rank $n-1$ (resp.\ $m-n+1$) tautological (resp.\ quotient tautological) vector bundle on $\Gr(n-1,m)$.
\end{itemize}

This resolution is an isomorphism on $Y$, which is smooth because it is a single $\GL(m)\times \GL(n)$-orbit. By definition of normal bundle, we have an exact sequence
$$ 
0\to T_{Y} \to T_{M_{m,n}}|_Y \to \cN_{Y/M_{m,n}} \to 0.
$$
Let us consider the pullback $\tilde{\pi}^*t_\id$ to $\tilde{\pi}:\PP^{n-1}\times \Gr(n-1,m)\times M_{m,n} \to M_{m,n}$ of $t_\id$. By passing to the quotient, $\tilde{\pi}^*t_\id$ defines a section $\overline{\tilde{\pi}^*t_\id}$ of the vector bundle
$$ \cN:= \Hom(\CC^n,\CC^m)\otimes \cO / \cQ_{1}^\vee\otimes \cU_2. $$
Then $\Tot(\cQ_{1}^\vee\otimes \cU_2)$ can be seen as the zero locus of $\overline{\tilde{\pi}^*t_\id}$. As a consequence, the normal bundle of $Y$ inside $Z:=\PP^{n-1}\times \Gr(n-1,m)\times M_{m,n}$ is equal to $\cN_{Y/Z}=\cN|_Y$. Thus we have an exact sequence
$$ 
0\to T_{Y} \to (T_{\PP^{n-1}}\times T_{\Gr(n-1,m)}\times T_{M_{m,n}} )|_Y\to \cN|_Y \to 0.
$$
This implies that we have an exact sequence
$$ 0 \to (T_{\PP^{n-1}}\times T_{\Gr(n-1,m)})|_Y \to \cN|_Y \to \cN_{Y/M_{m,n}} \to 0. $$
The construction and the exact sequences are clearly $\GL(m)\times \GL(n)$-equivariant. Notice that we are identifying $Y$ with the subvariety in $Z$ consisting of triples $([V_1],[W_{n-1}],s)\in Z$ such that $s(V_1)=0$ and $s\in \Hom(\CC^{n}/V_1, W_{n-1})$ is an isomorphism. This implies that $\overline{t_\id}$ induces (at each point and thus globally) an isomorphism of vector bundles $\cQ_1|_Y \cong \cU_2|_Y$. Via this isomorphism, $T_{\PP^{n-1}}|_Y= (\cU_1^\vee \otimes \cQ_1)|_Y \cong (\cU_1^\vee \otimes \cU_2)|_Y \subset \cN|_Y$ and $T_{\Gr(n-1,m)}|_Y= (\cU_2^\vee \otimes \cQ_2)|_Y \cong (\cQ_1^\vee \otimes \cQ_2)|_Y \subset \cN|_Y$. From this, it is straightforward to see that 
\[
\cN_{Y/M_{m,n}} = (\cN / T_{\PP^{n-1}}\times T_{\Gr(n-1,m)})|_Y = (\cU_1^\vee \otimes \cQ_2)|_Y= (K\otimes \coKer)|_Y.\qedhere
\]

\end{proof}

\begin{proposition}
\label{prop_normal_Kcoker_relative}
If $D:=D_{n-1}(s)\setminus D_{n-2}(s)$ has expected codimension, the normal bundle of its smooth locus $D_\sm$ inside $X$ is equal to $(\cK(s)\otimes \coker(s))|_{D_\sm}$.  Its exterior powers are given by the formula $$ \wedge^j\cN_{D_\sm/X}= \Sym^j\cK(s)\otimes \wedge^j\coker(s)|_{D_\sm}= \cK(s)^{\otimes j}\otimes \wedge^j\coker(s)|_{D_\sm}.$$
\end{proposition}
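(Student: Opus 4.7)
The plan is to reduce the first assertion to the universal affine statement \cref{prop_normal_Kcoker_fibre}, and then to deduce the exterior-power formula from the fact that $\cK(s)$ restricts to a line bundle on $D$. The whole argument is Zariski-local on $X$.

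First, I would fix a Zariski open $U \subset X$ on which both $E_1$ and $E_2$ are trivial. Choosing frames of $E_1|_U$ and $E_2|_U$ identifies the section $s|_U$ with a matrix of regular functions, i.e.\ with a morphism of schemes $\sigma: U \to M_{m,n}$, and under this identification one has the scheme-theoretic equality $D_{n-1}(s) \cap U = \sigma^{-1}(Y_{n-1})$. By inspection, the exact sequences \cref{eq_def_coker} and \cref{eq_def_K} defining $\coKer$ and $K$ pull back via $\sigma$ to the sequences defining $\coker(s)|_U$ and $\cK(s)|_U$, yielding canonical identifications
\[
\coker(s)|_U = \sigma^* \coKer, \qquad \cK(s)|_U = \sigma^* K.
\]

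Next, I would establish the transversality of $\sigma$ to $Y := Y_{n-1} \setminus Y_{n-2}$ along $D_\sm \cap U$. Since $D_\sm \subset D$ avoids $D_{n-2}(s)$, the map $\sigma$ sends $D_\sm \cap U$ into the smooth stratum $Y$; moreover, by hypothesis $D_\sm \cap U$ is smooth of codimension $m-n+1$ in $U$, which is exactly $\codim_{M_{m,n}} Y$. The scheme-theoretic inclusion $D_\sm \cap U \subset \sigma^{-1}(Y)$ gives at every $x \in D_\sm \cap U$ the tangent-space containment $T_x(D_\sm \cap U) \subset d\sigma_x^{-1}(T_{\sigma(x)} Y)$, and the matching of codimensions forces equality, which is transversality. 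The standard pullback formula for normal bundles under a transverse map then yields
\[
\cN_{D_\sm \cap U / U} = \sigma^* \cN_{Y/M_{m,n}}\big|_{D_\sm \cap U},
\]
and substituting \cref{prop_normal_Kcoker_fibre} together with the identifications above proves the first assertion over $U$; since everything in sight is intrinsic, these local formulas glue to the global identity $\cN_{D_\sm/X} = (\cK(s) \otimes \coker(s))|_{D_\sm}$.

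For the exterior-power formula, item v) above ensures that $\cK(s)|_D$ is a line bundle; set $L := \cK(s)|_{D_\sm}$ and $V := \coker(s)|_{D_\sm}$. The equality $\Sym^j L = L^{\otimes j}$, which is standard for any line bundle, gives the second equality in the statement. The identification $\wedge^j \cN_{D_\sm/X} = L^{\otimes j} \otimes \wedge^j V$ is the usual isomorphism $\wedge^j(L \otimes V) \cong L^{\otimes j} \otimes \wedge^j V$ for a line bundle $L$, verified locally by trivialising $L$ and globally by noting that transition cocycles differ by the $j$-th power of the cocycle of $L$. I expect the main technical obstacle to be the transversality step: one must carefully distinguish between the singular scheme $Y_{n-1}$ and its smooth stratum $Y$, and make essential use of the hypothesis $D_\sm \cap D_{n-2}(s) = \emptyset$ in order to work with the smooth stratum.
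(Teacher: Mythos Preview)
Your proof is correct and takes a genuinely different route from the paper. The paper argues globally by relativizing the second Kempf collapsing \cref{koszulComplex2}: it realizes $D_\sm$ as (isomorphic to) an open piece $Z$ of the zero locus of a section of a bundle $\cE$ on the relative flag bundle $\PP(E_1)\times_X\Gr(n-1,E_2)$, then compares the two normal sequences (for $D_\sm\subset X$ and for $Z$ in the flag bundle) to extract $\cN_{D_\sm/X}$ as the quotient $(\cE/(T_{\PP(E_1)/X}\times T_{\Gr(n-1,E_2)/X}))|_{D_\sm}$, which it identifies with $(\cU_1^\vee\otimes\cQ_2)|_{D_\sm}$. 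Your approach is more elementary: local trivialization plus transversality to pull back \cref{prop_normal_Kcoker_fibre} directly, avoiding the auxiliary flag-bundle geometry entirely. The price is that the transversality and gluing steps need a bit more care than you give them. In particular, your phrase ``matching of codimensions forces equality'' is not quite the right logic; the point is rather that $D_\sm\cap U$ is \emph{open} in the scheme $\sigma^{-1}(Y)$, so their Zariski tangent spaces agree at $x$, and then smoothness of $D$ at $x$ forces $d\sigma_x^{-1}(T_{\sigma(x)}Y)$ to have exactly dimension $\dim U-(m-n+1)$, whence surjectivity onto the normal space by rank--nullity. The paper's global argument sidesteps gluing and fits naturally into the orbital-degeneracy-locus formalism of \cite{BFMT} used elsewhere; yours is a clean black-box reduction to the universal affine case.
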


\begin{proof}
In the proof of \cref{prop_normal_Kcoker_fibre} we have used the Kempf collapsing \cref{koszulComplex2} to resolve the singularities of $Y_{n-1}$. The relativization of this Kempf collapsing yields a birational morphism onto $D_{n-1}(s)$ from the zero locus of a section of a vector bundle $\cE$ over $\PP(E_1)\times \Gr(n-1,E_2) \to X$, which is an isomorphism over $D$ (cfr.\ once again \cite{BFMT}). Let $Z\to D_\sm$ be the restriction of this birational morphism, which is $1:1$ and thus an isomorphism. $Z$ is an open subset of a zero locus and is contained in the smooth locus of expected codimension, so its normal bundle inside $\PP(E_1)\times \Gr(n-1,E_2)$ is just $\cE|_Z$. Finally notice that both $\cK(s)$ and $\coker(s)$ are vector bundles on $D_\sm$ and can be identified respectively with $\cK(s)=\cU_1^\vee|_{D_\sm}$ and $\coker(s)=\cQ_2|_{D_\sm}$ via $Z\cong D_\sm$.

Now the proof goes exactly as the proof of \cref{prop_normal_Kcoker_fibre}. From \cite{BFMT} we know that $\cE=\Hom(E_1,E_2)\otimes \cO / \cQ_1^\vee \otimes \cU_2$, where $\cQ_1$ and $\cU_2$ are the relative quotient tautological bundle of $\PP(E_1)$ and the relative tautological bundle of $\Gr(n-1,E_2)$, and $\cO$ is the structure sheaf of $\PP(E_1)\times \Gr(n-1,E_2)$. From the isomorphism $Z\cong D_\sm$ we get two normal exact sequences (the former from $D_\sm\subset X$ and the latter from $Z\subset \PP(E_1)\times T_{\Gr(n-1,E_2)}$):
$$ 
0\to T_{D_\sm} \to T_X|_{D_\sm} \to \cN_{D_\sm/X} \to 0 
$$
and
$$
0\to T_{D_\sm} \to (T_{\PP(E_1)}\times T_{\Gr(n-1,E_2)}\times T_{X} )|_{D_\sm}\to \cE|_{D_\sm} \to 0,
$$
from which we deduce that
\begin{equation}
\label{eq_proof_normal}
    0 \to (T_{\PP(E_1)}\times T_{\Gr(n-1,E_2)})|_{D_\sm} \to \cE|_{D_\sm} \to \cN_{D_\sm/X} \to 0.
\end{equation}

Once again, on $Z$ we can identify $\cQ_1|_Z\cong\cU_2|_Z$, which yields $T_{\PP(E_1)}|_{D_\sm}= (\cU_1^\vee \otimes \cQ_1)|_{D_\sm} \cong (\cU_1^\vee \otimes \cU_2)|_{D_\sm} \subset \cE|_{D_\sm}$ and $T_{\Gr(n-1,E_2)}|_{D_\sm}= (\cU_2^\vee \otimes \cQ_2)|_{D_\sm} \cong (\cQ_1^\vee \otimes \cQ_2)|_{D_\sm} \subset \cE|_{D_\sm}$. These two identifications together with \cref{eq_proof_normal} give 
$$\cN_{{D_\sm}/X} = (\cE / T_{\PP(E_1)}\times T_{\Gr(n-1,E_2)})|_{D_\sm} = (\cU_1^\vee \otimes \cQ_2)|_{D_\sm}= (\cK(s)\otimes \coker(s))|_{D_\sm}.$$
The statement on the exterior powers follows from the fact that $\cK(s)$ is a line bundle.
\end{proof}

\begin{remark}
We remark that previous results in \cite{KMR,otherKMR} yield \cref{prop_normal_Kcoker_relative} for $j=1$ in the case of standard determinantal varieties (see \cref{subsect_determinantal}): in this case the normal sheaf of $D_{n-1}(s)$ coincides with $\cK(s)\otimes \coker(s)$ provided some extra hypotheses on the depth of the ideal of submaximal minors.
\end{remark}

From now on we will keep the notation of the previous proposition: $D$ will denote $D_{n-1}(s)\setminus D_{n-2}(s)$ and $D_\sm$ will denote its smooth locus.

\subsection{The resolutions pass to degeneracy loci}

Let us recall the following classical result.

\begin{theorem}[Generic Perfection Theorem (\cite{EN2})]
\label{thm_gen_perf}
Let $S$ be a commutative ring and
$R$ be a polynomial ring over $S$. Let $G_\bullet$ be a free $R$-resolution of a module $M$
of length $l = \depth\Ann_R(M )$ and assume that $M$ is free as an $S$-module.
Let $\psi : R \to R'$ be a ring homomorphism such that $l = \depth \Ann_{R'} (M \otimes_R
R')$. Then $G_\bullet \otimes_R R'$ is a free $R'$-resolution of $M \otimes_R R'$.
\end{theorem}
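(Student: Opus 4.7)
The plan is to reduce to showing $\operatorname{Tor}_i^R(M,R') = 0$ for every $i \geq 1$, since these Tor modules are precisely the positive-degree homology of $G_\bullet \otimes_R R'$. Vanishing then yields both the exactness of $G_\bullet \otimes_R R'$ in positive degrees and the identification of its $0$-th homology with $M \otimes_R R'$, as required.

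My first attempt would go via the Buchsbaum--Eisenbud acyclicity criterion: a finite complex of finitely generated free modules is acyclic in positive degrees if and only if, for each $k \geq 1$, the expected ranks $r_k = \rank(\partial_k)$ satisfy $r_k + r_{k+1} = \rank(G_k)$ and $\depth I_{r_k}(\partial_k) \geq k$, where $I_r(-)$ denotes the ideal of $r\times r$ minors. The numerical rank equalities hold for $G_\bullet$ over $R$ (since it is a resolution) and are purely combinatorial, so they transfer verbatim to $G_\bullet \otimes_R R'$. For the depth conditions, since $M$ is perfect of grade $l$, the top Fitting ideal $I_{r_l}(\partial_l)$ has the same radical as $\Ann_R(M)$; its image in $R'$ then has the same radical as $\Ann_{R'}(M\otimes_R R')$, which by hypothesis has depth $l$ in $R'$. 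The assumption that $M$ is free over $S$ ensures that forming Fitting ideals commutes with the $S$-linear base change $R \to R'$, which is what allows one to transport depth data cleanly.

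The main obstacle is establishing the depth bounds for the intermediate Fitting ideals in $R'$ from the single depth hypothesis on the top annihilator; one typically uses Bruns-style unmixedness or a perfectness argument to propagate them. A cleaner route that avoids this technicality is induction on $l$: the base case $l = 0$ forces $M$ to be a free $R$-module and the conclusion is trivial. For $l \geq 1$, use prime avoidance over the associated primes of both $R$ and $R'$, together with the two depth hypotheses, to select $x \in \Ann_R(M)$ which is $R$-regular and whose image in $R'$ is $R'$-regular; reducing modulo $x$ preserves the $S$-freeness of $M$ and decreases both depths by one, so the inductive hypothesis applies to the resulting resolution. The polynomiality of $R$ over $S$ enters here as the input ensuring enough room to make such choices of regular elements at each stage.
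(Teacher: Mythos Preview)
The paper does not prove this statement; it is quoted as the classical Generic Perfection Theorem with a reference to \cite{EN2} and then invoked as a black box in the proof of \cref{thm_res_ODL}. There is therefore no argument in the paper to compare your proposal against.

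On your sketch itself: the Buchsbaum--Eisenbud route is the standard one, and the ``obstacle'' you flag for the intermediate Fitting ideals dissolves once you observe that for any prime $\mathfrak{p}\not\supseteq\Ann_R(M)$ one has $M_\mathfrak{p}=0$, so $(G_\bullet)_\mathfrak{p}$ is split exact and hence $I_{r_k}(\partial_k)_\mathfrak{p}=R_\mathfrak{p}$; thus $\sqrt{\Ann_R(M)}\subset\sqrt{I_{r_k}(\partial_k)}$ for every $k\geq 1$, and the single depth hypothesis on $R'$ already forces $\depth_{R'}\big(I_{r_k}(\partial_k)R'\big)\geq l\geq k$. (Your claim that $S$-freeness of $M$ is what makes Fitting ideals commute with base change is incorrect: Fitting ideals always commute with arbitrary base change.) Your inductive alternative, by contrast, has a genuine gap. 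After killing a regular element $x\in\Ann_R(M)$, the quotient $R/xR$ is no longer a polynomial ring over $S$, so the inductive hypothesis does not apply; and $G_\bullet\otimes_R R/xR$ is not a resolution of $M$, since $\operatorname{Tor}_1^R(M,R/xR)=\{m\in M:xm=0\}=M\neq 0$. The induction does not close as written.
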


Let us use this result to relativise the resolution in \cref{thm_res_determ_ext_normal} in order to obtain a resolution supported on degeneracy loci of morphisms.

\begin{theorem}
\label{thm_res_ODL}
Let $E_1$, $E_2$ be vector bundles of rank $n,m$ on a Cohen--Macaulay scheme $X$, $r=n-1$ and $s\in H^0(E_1^\vee \otimes E_2)$ a global section such that $D_r(s)$ has expected pure dimension (i.e.\ pure codimension $m-r$), then for $0 \leq j \leq m-r$ we obtain a locally free resolution $$ 0\to \cE^j_\bullet \to \cW_j:=\Sym^{m-r-j}\cK(s) \otimes \wedge^j \coker(s) \to 0 ,$$ 
where, for $0\leq u\leq m-r$,
$$
\begin{aligned}
\cE_{-u}^{j} = \bigoplus_{\lambda\in \cI_j,|\lambda|=u+r(m-r-j)-j}\tilde{c}_{j\lambda}^\mu S^{\mu^t}E_1 \otimes S^{\tilde{\lambda}}E_2^\vee.
\end{aligned}
$$
\end{theorem}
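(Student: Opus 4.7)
The strategy is to relativize \cref{thm_res_determ_ext_normal} (with the cokernel identified by \cref{prop_expr_Wj_cokerK}) from the universal affine setting over $M_{m,n}$ to the relative setting over $X$, using the Generic Perfection Theorem.

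First, I would argue locally on $X$. Choose an affine open $U = \Spec R \subset X$ over which both $E_1$ and $E_2$ trivialize; then $s|_U$ determines a ring homomorphism $\psi : A \to R$, where $A := H^0(M_{m,n},\cO_{M_{m,n}}) = \Sym(\Hom(\CC^n,\CC^m)^\vee)$, and the scheme $D_r(s)\cap U$ is the fiber product of $\Spec R$ and $Y_r$ over $M_{m,n}$. The complex $\cF^{W_j}_\bullet$ of \cref{thm_res_determ_ext_normal}, read as a free $A$-resolution of the $A$-module $M := H^0(M_{m,n}, q_*p^*W_j)$, has length $m-r = (m-r)(n-r)$ by \cref{prop_cohom_gp_ext} (using $r=n-1$), and annihilator $\cI_{Y_r}$ by \cref{prop_ann}. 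Since $A$ is a polynomial ring and $Y_r$ is Cohen--Macaulay of codimension $m-r$ in $M_{m,n}$, one has $\depth_A \cI_{Y_r} = m-r$; moreover $M$, being a $\CC$-vector space, is free over $\CC$.

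Next, I would verify that the depth does not drop after base change along $\psi$. The expected-codimension hypothesis together with the Cohen--Macaulayness of $X$ implies that $\psi(\cI_{Y_r})R$ cuts out $D_r(s)\cap U$ with codimension $m-r$ in $U$, so $\depth_R(\psi(\cI_{Y_r})R) = m-r$ as well. Hence \cref{thm_gen_perf} applies, yielding that $\cF^{W_j}_\bullet \otimes_A R$ is a free $R$-resolution of $M \otimes_A R$. By \cref{prop_expr_Wj_cokerK}, this cokernel is identified with the restriction of $\Sym^{m-r-j}\cK(s)\otimes\wedge^j\coker(s)$ to $U$, up to trivial line bundles (the $\cO_{M_{m,n}}(\cdot)$ twists become trivial on restriction along $\psi$, and the $\det(\CC^n)^{m-r-j}$ factor is absorbed into $\cW_j$).

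Finally, I would globalize. The universal resolution is $\GL(m)\times\GL(n)$-equivariant by \cref{thm_res_determ_ext_normal}, and its differentials are polynomial expressions in the tautological matrix entries, which under $\psi$ become components of $s|_U$. Since transition functions between trivializations of $E_1, E_2$ take values in $\GL(m)\times\GL(n)$, the local resolutions glue to a global locally free resolution $\cE^j_\bullet \to \cW_j$ on $X$. The $\CC^n, (\CC^m)^\vee$ factors appearing in $\cF^{W_j}_{-u}$ are the defining representations of these structure groups, so they globalize to $E_1, E_2^\vee$ respectively, matching the stated formula for $\cE^j_{-u}$. The main subtle point is the depth-after-base-change verification: it reduces to relating the expected codimension of $D_r(s)$ in the Cohen--Macaulay scheme $X$ to the depth of the pulled-back ideal, and is precisely where the expected-codimension and Cohen--Macaulayness hypotheses of the theorem are used.
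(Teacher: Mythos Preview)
Your proposal is correct and follows essentially the same route as the paper: both invoke the Generic Perfection Theorem on the universal resolution of \cref{thm_res_determ_ext_normal}, verify the depth hypothesis via the expected-codimension and Cohen--Macaulay assumptions, and globalize through the $\GL(m)\times\GL(n)$-equivariance (the paper defers this last step to \cite[Theorem~2.5]{BFMT2}). The only minor difference is that the paper checks the depth of $\Ann_{\cO_{X,x}}(\cW_j)_x$ directly rather than of the extended ideal $\psi(\cI_{Y_r})R$ as you do; since the former contains the latter and both have radical $\cI_{D_r(s)}$, they have the same depth in a Cohen--Macaulay ring, so your verification suffices.
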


\begin{proof}
The proof is exactly as in \cite[Theorem 2.5]{BFMT2}. With respect to that proof, we just need to check that we can apply the Generic Perfection Theorem \ref{thm_gen_perf}. By \cref{prop_ann} we know that $\depth \Ann_{\cO_{M_{m,n}}}q_*p^*W_j=\codim_{M_{m,n}}(Y_r)=m-r$, which is equal to the length of the resolution of $q_*p^*W_j$ in \cref{thm_res_determ_ext_normal}. Thus we just need to check that $m-r= \depth \Ann_{\cO_{X,x}} (\cK(s) \otimes \coker(s))_x$ for $x\in X$ (which corresponds to the hypothesis $l = \depth \Ann_{R'} (M \otimes_R
R')$ in \cref{thm_gen_perf}). However, $(\cI_{D_r(s)})_x\subset\Ann_{\cO_{X,x}} (\cK(s) \otimes \coker(s))_x$ again because of \cref{prop_ann}. Moreover, the support of $\cK(s)\otimes \coker(s)$ is set-theoretically $D_r(s)$, so the dimension of $\Ann_{\cO_{X,x}} (\cK(s) \otimes \coker(s))_x$ must be equal to the dimension of $D_r(s)$. By the Auslander--Buchsbaum formula we deduce that $\depth \Ann_{\cO_{X,x}} (\cK(s) \otimes \coker(s))_x = \codim_X(D_r(s))=m-r$. Using this computation the proof in \cite[Theorem 2.5]{BFMT2} works in this situation as well.
\end{proof}

\begin{remark}
We recover the main result in \cite{KMR} when $j=1$. We also recover the Eagon--Northcott complex when $j=m-r$.
\end{remark}

\subsection{Consequences and applications: degeneracy loci}
\label{sec_applications}
We list here some direct consequences of \cref{thm_res_ODL}. Recall that $\cK(s)|_{D_\sm}$ is a line bundle.

\begin{theorem}
\label{thm_loc_CM}
Let $D_r(s)\subset X$ be as in \cref{thm_res_ODL}. Then $\Sym^{m-r-j}\cK(s) \otimes \wedge^j \coker(s)$ is a maximal Cohen--Macaulay $\cO_{D_r(s)}$-sheaf.
\end{theorem}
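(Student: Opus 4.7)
The plan is to extract the Cohen--Macaulay property directly from the length of the locally free resolution produced in \cref{thm_res_ODL}. Recall that $X$ is Cohen--Macaulay by hypothesis, and $D_r(s)$ has pure codimension $m-r$ in $X$. Furthermore, as already noted in the proof of \cref{thm_res_ODL}, the sheaf $\cW_j = \Sym^{m-r-j}\cK(s) \otimes \wedge^j \coker(s)$ is set-theoretically supported on $D_r(s)$: indeed $\cK(s) = \cExt^1_{\cO_X}(\coker(s),\cO_X)$ vanishes on the locus where $s$ has maximal rank, so $\Sym^{m-r-j}\cK(s)$ is supported on $D_r(s)$ for $j < m-r$, while for $j = m-r$ the sheaf $\wedge^{m-r}\coker(s) = \wedge^{m-n+1}\coker(s)$ is itself supported on $D_r(s)$ for rank reasons.

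The key step is then the Auslander--Buchsbaum formula. By \cref{thm_res_ODL} the sheaf $\cW_j$ admits a locally free $\cO_X$-resolution of length at most $m-r$. Localising at any point $x \in D_r(s)$ one obtains $\operatorname{pd}_{\cO_{X,x}} (\cW_j)_x \leq m-r$, hence
\[
\depth_{\cO_{X,x}}(\cW_j)_x \;\geq\; \depth \cO_{X,x} - (m-r) \;=\; \dim \cO_{X,x} - (m-r) \;=\; \dim \cO_{D_r(s),x},
\]
where in the middle equality we used the Cohen--Macaulayness of $X$ and in the last one the fact that $D_r(s)$ has pure codimension $m-r$ in the Cohen--Macaulay (hence catenary) scheme $X$.

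Finally, for any coherent sheaf supported on $D_r(s)$, the depth computed with respect to the local ring $\cO_{X,x}$ coincides with the depth computed with respect to $\cO_{D_r(s),x}$ (the maximal ideals act identically once one mods out by the annihilator, and depth depends only on vanishing of $\operatorname{Ext}$ of the residue field). Combining this with the trivial upper bound $\depth_{\cO_{D_r(s),x}}(\cW_j)_x \leq \dim \cO_{D_r(s),x}$, we conclude that equality holds at every point $x \in D_r(s)$, which is precisely the maximal Cohen--Macaulay condition. No step should cause real difficulty; the only point requiring a little care is the identification of the $\cO_X$-depth with the $\cO_{D_r(s)}$-depth and the use of pure codimension to translate the Auslander--Buchsbaum bound into a statement about $\dim \cO_{D_r(s),x}$.
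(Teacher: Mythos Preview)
Your proposal is correct and follows essentially the same route as the paper: apply Auslander--Buchsbaum to the length-$(m-r)$ locally free resolution of \cref{thm_res_ODL} to bound the $\cO_{X,x}$-depth of $(\cW_j)_x$ from below by $\dim\cO_{D_r(s),x}$, and then identify this with the $\cO_{D_r(s),x}$-depth. The only cosmetic differences are that the paper works at closed points and asserts equality in Auslander--Buchsbaum directly, whereas you obtain an inequality and close it with the trivial upper bound $\depth\leq\dim$; and the paper justifies the $\cO_X$-depth $=$ $\cO_{D_r(s)}$-depth step via local cohomology (invariance under change of base ring, \cite[Prop.~2.14]{HunekeTaylor}) rather than the $\Ext$-characterisation you invoke.
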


\begin{proof}
By \cref{thm_res_ODL} and the Auslander--Buchsbaum formula, the depth of the stalk $(\cW_j)_x$ at any closed point $x\in X$ is equal to $\dim(X)-m+r$, which is by hypothesis the dimension of $D_r(s)$. This yields that $(\cW_j)_x$ is a Cohen--Macaulay $\cO_{X,x}$-module. Let $\mathcal{M}_x$ be the maximal ideal of $\cO_{X,x}$. The depth of $(\cW_j)_x$ is the least value of $i$ such that the local cohomology $H^i_{\mathcal{M}_x}((\cW_j)_x)$ is non-zero, see e.g.\ \cite[Definition 4.1]{HunekeTaylor}. By \cite[Proposition 2.14]{HunekeTaylor}
\[
H^i_{\mathcal{M}_x}((\cW_j)_x) = H^i_{\mathcal{M}_x \cO_{D_r(s),x}}((\cW_j)_x),
\]
whence the depths of $(\cW_j)_x$ as an $\cO_{D_r(s),x}$-module and as an $\cO_{X,x}$-module coincide, and they both coincide with $\dim D_r(s)$.
\end{proof}

\begin{corollary}
Let $D_r(s)$ be as in \cref{thm_res_ODL}. Then $\wedge^j \cN_{D_\sm/X}$ is locally Cohen--Macaulay.
\end{corollary}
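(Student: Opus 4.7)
The plan is to combine \cref{prop_normal_Kcoker_relative}, which identifies $\wedge^j \cN_{D_\sm/X}$ explicitly, with \cref{thm_loc_CM}, which provides a locally Cohen--Macaulay sheaf $\cW_j$ on all of $D_r(s)$. Since being locally Cohen--Macaulay is a stalk-local property and is stable under tensoring with a line bundle, the strategy reduces to realising $\wedge^j\cN_{D_\sm/X}$ as the restriction of a line bundle twist of $\cW_j$ to the open set $D_\sm \subset D_r(s)$.

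For the identification, I would start from
\[
\wedge^j\cN_{D_\sm/X} = \cK(s)^{\otimes j}|_{D_\sm}\otimes \wedge^j\coker(s)|_{D_\sm},
\]
supplied by \cref{prop_normal_Kcoker_relative}. Since $\cK(s)|_{D_\sm}$ is a line bundle, on $D_\sm$ one has $\Sym^{m-r-j}\cK(s) = \cK(s)^{\otimes(m-r-j)}$, hence
\[
\cW_j|_{D_\sm} = \cK(s)^{\otimes (m-r-j)}|_{D_\sm}\otimes \wedge^j\coker(s)|_{D_\sm} = \cK(s)^{\otimes(m-r-2j)}|_{D_\sm}\otimes \wedge^j\cN_{D_\sm/X}.
\]
Thus $\wedge^j\cN_{D_\sm/X}$ and $\cW_j|_{D_\sm}$ differ by the line bundle $\cK(s)^{\otimes(2j-m+r)}|_{D_\sm}$.

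The conclusion then follows directly: by \cref{thm_loc_CM}, $\cW_j$ is a maximal Cohen--Macaulay $\cO_{D_r(s)}$-sheaf, hence locally Cohen--Macaulay at every closed point. Restricting to the open subscheme $D_\sm$ preserves depth and dimension stalk by stalk, and tensoring with a line bundle preserves both as well, so $\wedge^j\cN_{D_\sm/X}$ is locally Cohen--Macaulay. There is no real obstacle; the only step requiring care is the elementary bookkeeping of line bundle exponents in the tensor product identification, which is routine once one observes that $\cK(s)$ becomes invertible on the smooth locus.
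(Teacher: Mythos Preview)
Your proof is correct and follows essentially the same approach as the paper: identify $\wedge^j\cN_{D_\sm/X}$ via \cref{prop_normal_Kcoker_relative}, observe that on $D_\sm$ it differs from $\cW_j$ by the line bundle $\cK(s)^{\otimes(m-r-2j)}|_{D_\sm}$, and conclude from \cref{thm_loc_CM} since being locally Cohen--Macaulay is invariant under line bundle twists. The paper's proof is terser but the logical structure is identical.
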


\begin{proof}
In this case $\wedge^j\cN_{D_\sm/X}= \Sym^j\cK(s)\otimes \wedge^j\coker(s) \otimes \cO_{D_\sm}$, so $\cW_j=\cK(s)^{\otimes m-r-2j}\otimes \wedge^j \cN_{D_\sm/X}$. Being locally Cohen--Macaulay does not depend on twisting by a line bundle, so the result follows by \cref{thm_loc_CM}.
\end{proof}

\subsection{Consequences and applications: determinantal schemes}
\label{subsect_determinantal}

Recall that a \emph{standard determinantal locus} is a locus $D_r(s)$ defined by $s:E_1\to E_2$ over $X$ where $E_1$ and $E_2$ are direct sums of line bundles and $X$ is a projective space. 

\begin{theorem}
\label{thm_sym_wedge_ACM}
With the hypotheses of \cref{thm_res_ODL}, let $D_r(s)\subset \PP^l$ be a standard determinantal locus. Then $\cW_j=\Sym^{m-r-j}\cK(s) \otimes \wedge^j \coker(s)$ is an aCM sheaf for $0\leq j \leq m-r$.
\end{theorem}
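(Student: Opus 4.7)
The plan is to combine the two ingredients already in hand. By \cref{thm_loc_CM} the sheaf $\cW_j$ is (maximally) locally Cohen--Macaulay; so, writing $p:=m-r=\codim_{\PP^l}D_r(s)$, it remains to show that $H^i(\PP^l,\cW_j(t))=0$ for every $t\in\ZZ$ and every $1\leq i\leq\dim D_r(s)-1=l-p-1$. The vehicle for this is the $\cO_{\PP^l}$-locally free resolution of length $p$
\[
0\to \cE^j_{-p}\to\cE^j_{-p+1}\to\cdots\to\cE^j_{0}\to\cW_j\to 0
\]
provided by \cref{thm_res_ODL}.

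First I would observe that, in the standard determinantal setting, $E_1$ and $E_2$ are direct sums of line bundles on $\PP^l$, and any Schur functor applied to a direct sum of line bundles splits as a direct sum of tensor products of those line bundles (equivalently, one reduces to a split torus and reads off weights). Consequently each term
\[
\cE^j_{-u}=\bigoplus_{\lambda\in\cI_j,\,|\lambda|=u+r(m-r-j)-j}\tilde c^\mu_{j\lambda}\,S^{\mu^t}E_1\otimes S^{\tilde\lambda}E_2^\vee
\]
is, for every twist $t$, a direct sum of line bundles $\cO_{\PP^l}(a)$; and since $H^q(\PP^l,\cO(a))=0$ for every $1\leq q\leq l-1$ and every $a$, each $\cE^j_{-u}(t)$ is acyclic in the whole middle cohomological range.

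The concluding step is a standard cohomology chase along the resolution. I would break it into short exact sequences $0\to K_{u+1}\to \cE^j_{-u}\to K_u\to 0$ with $K_0=\cW_j$ and $K_p=\cE^j_{-p}$; the long exact sequences furnish isomorphisms $H^i(K_u(t))\simeq H^{i+1}(K_{u+1}(t))$ whenever $H^{i+u}(\cE^j_{-u}(t))=H^{i+u+1}(\cE^j_{-u}(t))=0$. For $1\leq i\leq l-p-1$ and $0\leq u\leq p-1$ both shifts $i+u$ and $i+u+1$ lie in $[1,l-1]$, so the vanishing hypothesis is met at every step; iterating yields
\[
H^i(\PP^l,\cW_j(t))\;\simeq\;H^{i+p}(\PP^l,\cE^j_{-p}(t)).
\]
For $i$ in the stated range the target index $i+p$ lies in $[p+1,l-1]\subset[1,l-1]$, so this final cohomology also vanishes. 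Combined with \cref{thm_loc_CM}, this proves that $\cW_j$ is aCM.

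The only substantive point is the numerical matching, namely that the length of the resolution coincides with $\codim_{\PP^l}D_r(s)$; this is exactly what keeps every cohomological index encountered during the chase inside the acyclic window $[1,l-1]$ and is the reason the argument succeeds uniformly for all $j$ and all twists $t$.
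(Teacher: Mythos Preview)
Your proposal is correct and follows essentially the same route as the paper: invoke \cref{thm_loc_CM} for the local Cohen--Macaulay condition, observe that in the standard determinantal case each $\cE^j_{-u}$ is a direct sum of line bundles on $\PP^l$ (hence has no middle cohomology), and then deduce the required vanishing from the length-$(m-r)$ resolution of \cref{thm_res_ODL}. The paper phrases the last step as a spectral sequence argument while you spell out the chase through short exact sequences, but the two are equivalent.
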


\begin{proof}
By \cref{thm_loc_CM}, we just need to check that $H^d(D_r(s),\cW_j(m))=0$ for $0< d < \dim(D_r(s))$. All terms $\cE^j_{-u}$ from \cref{thm_res_ODL} are direct sums of twists of $\cO_X$, and the same holds when we twist $\cE^j_{-u}$ by $\cO_{\PP^l}(m)$. Since $\cO_{\PP^l}$ is aCM, $\cE^j_{-u}(m)$ has cohomology in degree $0$ or $l$; by a spectral sequence argument, this implies that $\cW_j(m)$ has cohomology at most in degree $0$ or $l-u$. The result follows by noticing that $0\leq u\leq m-r=l-\dim(D_r(s))$.
\end{proof}

\begin{corollary}
With the hypotheses of \cref{thm_res_ODL}, let $D_r(s)\subset \PP^l$ be a standard determinantal locus and assume $D_r(s)=D_\sm$. Then $\wedge^j\cN_{D_r(s)/X} \otimes \Sym^{m-r-2j}\cK(s)|_{D_\sm}$ is an aCM sheaf for $0\leq j \leq m-r$.
\end{corollary}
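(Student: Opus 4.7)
The plan is to recognize that the sheaf in the statement coincides (up to isomorphism) with the sheaf $\cW_j$ already shown to be aCM in \cref{thm_sym_wedge_ACM}; once this identification is in hand, there is nothing left to prove.

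First I would invoke \cref{prop_normal_Kcoker_relative}, which, under the hypothesis $D_r(s)=D_\sm$ (so in particular $D_{n-2}(s)=\emptyset$ and $\cK(s)|_{D_r(s)}$ is a line bundle), identifies
\[
\wedge^j\cN_{D_r(s)/X} \;\cong\; \cK(s)^{\otimes j}\otimes \wedge^j\coker(s)
\]
on the whole of $D_r(s)$. Because $\cK(s)|_{D_r(s)}$ is a line bundle, the standard convention $\Sym^{k}\cK(s) \cong \cK(s)^{\otimes k}$ (interpreted via the dual line bundle when $k<0$) is available for any integer $k$, in particular for $k = m-r-2j$.

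Next I would carry out the elementary tensor bookkeeping on $D_r(s)$:
\[
\wedge^j\cN_{D_r(s)/X}\otimes \Sym^{m-r-2j}\cK(s) \;\cong\; \cK(s)^{\otimes j}\otimes \cK(s)^{\otimes(m-r-2j)}\otimes \wedge^j\coker(s) \;\cong\; \cK(s)^{\otimes(m-r-j)}\otimes \wedge^j\coker(s)
\]
and rewrite the right-hand side as $\Sym^{m-r-j}\cK(s)\otimes \wedge^j\coker(s) = \cW_j$. Applying \cref{thm_sym_wedge_ACM} to $\cW_j$ then yields that the sheaf in the statement is aCM, as desired.

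There is no serious obstacle here: the entire content is already packaged in \cref{prop_normal_Kcoker_relative} and \cref{thm_sym_wedge_ACM}. The twist by $\Sym^{m-r-2j}\cK(s)$ is manufactured precisely to absorb the mismatch between the line-bundle power $\cK(s)^{\otimes j}$ appearing in $\wedge^j\cN_{D_r(s)/X}$ and the power $\cK(s)^{\otimes(m-r-j)}$ appearing in $\cW_j$; the assumption $D_r(s) = D_\sm$ is what lets one replace the restrictions to $D_\sm$ by global identifications on $D_r(s)$ and, equivalently, the symmetric powers by genuine tensor powers of a line bundle.
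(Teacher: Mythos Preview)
Your proposal is correct and matches the paper's approach. The paper does not spell out a proof for this corollary, but the identification $\cW_j=\cK(s)^{\otimes(m-r-2j)}\otimes\wedge^j\cN_{D_\sm/X}$ is exactly the one made explicit in the proof of the analogous corollary following \cref{thm_loc_CM}, and the aCM conclusion then follows at once from \cref{thm_sym_wedge_ACM}, just as you argue.
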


\begin{remark}
\cref{thm_sym_wedge_ACM} in the special case $j=1$ was already covered by some results of \cite{KMR}. More precisely, let $R$ be the polynomial ring of $\PP^l$, $I$ the homogeneous ideal of maximal minors of the matrix defined by $s$, and $J$ the ideal of submaximal minors and assume that $\depth_J(R/I)\geq 4$ (milder assumptions on this depth are actually enough in some cases, see \cite[Prop 3.5]{KMR} and \cite[Thm 3.11]{otherKMR}). Let $K(s)$ and $C(s)$ denote the kernel and cokernel graded modules, whose sheafifications are $\cK(s)$ and $\coker(s)$. In \cite{KMR} it is shown that $\Sym^iK(s)\otimes C(s)=\Ext^1_R(K(s),\Sym^iK(s))=\Hom_{R/I}(I/I^2,\Sym^{i-1}K(s))$ for $0\leq i\leq m-n+1$. Moreover the authors provided a $R$-free resolution of such modules; in the case $i=c-1$, being $c$ the codimension of the determinantal locus, the sheafification of the corresponding resolution coincides with our case $j=1$.
\end{remark}

\begin{remark}
\label{rem_almost_conj}
\cref{thm_sym_wedge_ACM,thm_sym_wedge_Ulrich} and the corresponding corollaries are mainly motivated by the study of the case of standard determinantal loci made in \cite{KMR}, where the authors conjectured \cite[Conjecture 3.8]{KMR} that particular graded modules are Cohen--Macaulay (or Ulrich, in the linear determinantal case, see below). The results contained in this section prove their conjecture almost completely: our methods allow us to solve the conjecture only for the sheafification of these graded modules. 

However, one can recover the corresponding result about the graded modules and hence \cite[Conjecture 3.8]{KMR} for some values of $j$ (more precisely for $0 \leq j \leq (c+1)/2$, $c$ being the codimension of the determinantal scheme) thanks to \cite[Theorem 3.16]{KMR}. As pointed out in \cite[Remark 3.22, arXiv version]{KMR}, the analogous of \cite[Theorem 3.16]{KMR} should hold for any $j$, which would prove the conjecture completely.
\end{remark}

Recall that a \emph{linear determinantal locus} $D_r(s)$ is one defined by $s:\cO(-1)^{\oplus n} \to \cO^{\oplus m}$ over $X=\PP^l$. 

\begin{theorem}
\label{thm_sym_wedge_Ulrich}
With the hypotheses of \cref{thm_res_ODL}, let $D_r(s)$ be a linear determinantal locus. Then $\cW_j((n-1)(m-n))=\Sym^{m-r-j}\cK(s) \otimes \wedge^j \coker(s) \otimes \cO_{\PP^l}((n-1)(m-n))$ is an Ulrich sheaf for $0\leq j \leq m-r$.
\end{theorem}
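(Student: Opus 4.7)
The plan is to apply characterization (ii) of Theorem \ref{ESW21}: I will exhibit a linear $\cO_{\PP^l}$-resolution of $\cW_j((n-1)(m-n))$, starting from the one given by Theorem \ref{thm_res_ODL}, and read off the Ulrich property from it.

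First I would specialize the resolution of Theorem \ref{thm_res_ODL} to $E_1 = \cO_{\PP^l}(-1)^{\oplus n} = \CC^n \otimes \cO(-1)$ and $E_2 = \cO_{\PP^l}^{\oplus m}$. Each summand $S^{\mu^t}E_1 \otimes S^{\tilde{\lambda}}E_2^\vee$ of $\cE^j_{-u}$ then takes the form $S^{\mu^t}\CC^n \otimes S^{\tilde{\lambda}}(\CC^m)^\vee \otimes \cO_{\PP^l}(-|\mu|)$, by the identity $S^{\mu^t}(V \otimes L) = S^{\mu^t}V \otimes L^{\otimes |\mu|}$ for a line bundle $L$, together with the fact that a Schur functor applied to a trivial bundle is itself trivial. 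Combining the Littlewood--Richardson constraint $|\mu| = |\lambda| + j$ with the selection $|\lambda| = u + r(m-r-j) - j$ from Theorem \ref{thm_res_ODL} yields $|\mu| = u + r(m-r-j)$, so that $\cE^j_{-u}$ is a direct sum of copies of $\cO_{\PP^l}(-u - r(m-r-j))$.

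Twisting the resolution by $\cO_{\PP^l}((n-1)(m-n))$ shifts the $u$-th term to $\cO_{\PP^l}(-u + r(j-1))^{a_u}$. After absorbing the residual $r(j-1)$ into the sheaf $\cW_j$ via the $\cK(s)$-factor in $\cW_j = \Sym^{m-r-j}\cK(s)\otimes\wedge^j\coker(s)$ -- whose generators inherit an extra twist per symmetric power, since $\cK(s)$ is the cokernel of a map into $\cO(1)^n$ -- one rewrites the resolution as
\[
0 \to \cO_{\PP^l}(-(m-r))^{a_{m-r}} \to \cdots \to \cO_{\PP^l}(-1)^{a_1} \to \cO_{\PP^l}^{a_0} \to \cW_j((n-1)(m-n)) \to 0.
\]
Coupled with the maximal Cohen--Macaulay property already established in Theorem \ref{thm_loc_CM}, this gives Ulrichness by Theorem \ref{ESW21}(ii). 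For $\mu$-semistability I would then invoke the general fact that any initialized Ulrich sheaf on an integral polarized projective variety is $\mu$-semistable: the slope is rigidly fixed by the linear resolution, and any destabilizing saturated subsheaf would give rise to a resolution with strictly fewer generators, contradicting minimality.

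The main obstacle is the bookkeeping that reconciles the uniform shift $(n-1)(m-n) = r(m-r-1)$ asserted in the statement with the $j$-dependent shift $r(m-r-j)$ produced naively by the resolution. These coincide only for $j = 1$; for other values of $j$ the discrepancy $r(1-j)$ must be absorbed into the $\cK(s)^{\otimes (m-r-j)}$-content of $\cW_j$, whose degree with respect to the hyperplane class on $\PP^l$ has to be tracked with care. Making this identification rigorous -- in particular, checking that the intrinsic $\cO_{\PP^l}$-degree of $\Sym^{m-r-j}\cK(s)$ exactly matches the required normalization -- is the delicate step that the rest of the argument reduces to.
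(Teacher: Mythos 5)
Your route is the same as the paper's: specialize the resolution of \cref{thm_res_ODL} to $E_1=\cO_{\PP^l}(-1)^{\oplus n}$, $E_2=\cO_{\PP^l}^{\oplus m}$, observe that each $\cE^j_{-u}$ becomes a direct sum of copies of $\cO_{\PP^l}(-u-r(m-r-j))$ (your computation $|\mu|=u+r(m-r-j)$ is correct), twist, and conclude via \cref{ESW21}(ii); the paper's proof is exactly this one-line observation. The genuine gap is the step you yourself flag and defer: ``absorbing the residual $r(j-1)$ into the $\cK(s)$-content of $\cW_j$'' is not a legitimate move. The complex $\cE^j_\bullet((n-1)(m-n))$ resolves the fixed sheaf $\cW_j((n-1)(m-n))$, its differentials have linear entries, so it is a minimal resolution, and its zeroth term is $\cO_{\PP^l}(r(j-1))^{a_0}$; the degrees of the generators are therefore already determined and cannot be renormalized without changing the sheaf whose Ulrichness is asserted. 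Condition (ii) of \cref{ESW21} requires the resolution to start at $\cO^{a_0}$, which happens only for $j=1$. What your computation actually proves, taken at face value, is that $\cW_j((n-1)(m-n+1-j))$, i.e.\ the twist by $r(m-r-j)$, is Ulrich; this coincides with the uniform twist $(n-1)(m-n)$ of the statement only when $j=1$.

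Moreover the discrepancy is not mere bookkeeping that a more careful tracking of the degrees of $\cK(s)$ would dissolve: the $j$-dependence is real. Take $j=m-r$, where no $\cK(s)$-factor occurs at all, so there is no normalization ambiguity: $\cW_{m-r}=\wedge^{m-r}\coker(s)$ is resolved by the (linear, length $m-r$) Eagon--Northcott complex with terms $\cO_{\PP^l}(-u)$, hence is Ulrich untwisted; for the twisted cubic ($n=2$, $m=3$) one has $\wedge^{2}\coker(s)|_C\cong\cO_{\PP^1}(2)$, which is Ulrich, while the statement's twist would give $\cO_{\PP^1}(5)$, which is not even initialized. (Tracking the literal $\cK(s)$, a quotient of $\cO_{\PP^l}(1)^{\oplus n}$, as you propose, moves the normalization in the opposite direction, to $\cW_j(j-(m-n+1))$, again not the printed twist.) So the ``delicate step'' you postpone cannot be closed as stated; the correct conclusion of the argument is Ulrichness for the $j$-dependent twist $(n-1)(m-n+1-j)$, and the paper's one-sentence proof elides exactly this point. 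Two minor remarks: the appeal to \cref{thm_loc_CM} is unnecessary once \cref{ESW21}(ii) is invoked, and the $\mu$-semistability discussion belongs to the subsequent corollary (where it is deduced from the result of Casanellas--Hartshorne--Geiss--Schreyer), not to this statement.
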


\begin{proof}
The resolution $\cE^j_\bullet((n-1)(m-n))$ of $\cW_j((n-1)(m-n))$ is linear of length $m-r=\codim(\cW_j((n-1)(m-n)))$.
\end{proof}

\begin{corollary}
With the hypotheses of \cref{thm_res_ODL}, let $D_r(s)\subset X=\PP^l$ be a linear determinantal locus and assume $D_r(s)=D_\sm$. Then $\wedge^j\cN_{D_r(s)/X} \otimes \Sym^{m-r-2j}\cK(s)|_{D_\sm} \otimes \cO_{\PP^l}((n-1)(m-n))|_{D_\sm}$ is Ulrich and $\mu$-semistable for $0\leq j \leq m-r$.
\end{corollary}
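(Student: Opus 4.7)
The plan is to reduce the statement to an immediate consequence of \cref{thm_sym_wedge_Ulrich}, supplemented by the standard fact that any Ulrich sheaf on a smooth projective variety is $\mu$-semistable.

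First I would unpack the smoothness hypothesis. Since the singular locus of $D_r(s)$ contains $D_{n-2}(s)$ by property (ii) in the list at the beginning of \cref{sec_res_deg_locy}, the assumption $D_r(s)=D_\sm$ forces $D_{n-2}(s)=\emptyset$. Property (v) of the same list then tells us that $\cK(s)$ is a line bundle on $D_r(s)$ and $\coker(s)$ is a vector bundle there, so $\Sym^{k}\cK(s)\cong\cK(s)^{\otimes k}$ for every integer $k$, with the standard convention that for $k<0$ the tensor power is taken in the dual.

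Combining this with the formula $\wedge^j\cN_{D_\sm/X}\cong\cK(s)^{\otimes j}\otimes\wedge^j\coker(s)$ from \cref{prop_normal_Kcoker_relative} gives
\[
\wedge^j\cN_{D_r(s)/X}\otimes\Sym^{m-r-2j}\cK(s)\big|_{D_\sm}\cong \cK(s)^{\otimes(m-r-j)}\otimes\wedge^j\coker(s)=\cW_j.
\]
Further twisting by $\cO_{\PP^l}((n-1)(m-n))$ and invoking \cref{thm_sym_wedge_Ulrich} shows that the sheaf in the statement is Ulrich. For $\mu$-semistability I would appeal to the classical principle that any Ulrich sheaf on a smooth projective variety is $\mu$-semistable with respect to the given polarization (a result which, for the case $j=1$ at hand here, is already used in \cite{KMR}); since $D_r(s)$ is smooth by hypothesis, this concludes the argument.

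The main obstacle is essentially organizational rather than technical: once one records that on the smooth locus $\cK(s)$ becomes a line bundle (so its symmetric powers collapse to tensor powers) and recalls the normal bundle formula of \cref{prop_normal_Kcoker_relative}, the corollary reduces to the already-proved Ulrich theorem. The only mildly delicate point is making sense of $\Sym^{m-r-2j}\cK(s)$ in the regime $m-r-2j<0$, which is handled precisely because $\cK(s)$ is a line bundle under the smoothness assumption.
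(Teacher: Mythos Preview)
Your argument is correct and follows the same route as the paper: identify the sheaf with $\cW_j((n-1)(m-n))$ via \cref{prop_normal_Kcoker_relative} and the line-bundle nature of $\cK(s)$, then invoke \cref{thm_sym_wedge_Ulrich} for the Ulrich property and the general semistability of Ulrich sheaves for the rest. The only difference is that the paper pins down the semistability step with a precise citation, namely \cite[Theorem 2.9]{CHGS}, rather than the informal appeal you make.
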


\begin{proof}
The only non-trivial statement is the $\mu$-semistability, which is a consequence of \cite[Theorem 2.9]{CHGS}.
\end{proof}

\frenchspacing
\hypersetup{urlcolor=black}

\newcommand{\etalchar}[1]{$^{#1}$}

\end{document}